\documentclass[12pt]{article}
\usepackage{enumerate,amsmath,amsthm,latexsym,amssymb} 

\usepackage{graphicx}
\usepackage[usenames,dvipsnames]{xcolor}
\usepackage[margin=1in]{geometry}
\usepackage{tikz}
\usepackage{float}
\usepackage{bbm}
\usepackage{relsize}

\def\bX{{\bf X}}

\parskip 1ex
\parindent 0in
\def\cL{{\mathcal L}}

\newcommand{\set}[1]{\left\{#1\right\}}
\setlength{\textwidth}{6.5in}
\setlength{\textheight}{8in}
\setlength{\topmargin}{0in}
\setlength{\oddsidemargin}{0in}
\def\cD{{\mathcal D}}
\def\bo{{\bf 0}}
\def\ux{{\ul x}}
\def\uy{{\ul y}}
\def\uu{{\ul u}}
\def\uv{{\ul v}}
\def\uz{{\ul z}}
\def\cH{{\mathcal H}}

\def\cL{{\mathcal L}}

\newcommand{\brac}[1]{\left( #1 \right)}
\newcommand\bfrac[2]{\left(\frac{#1}{#2}\right)}
\def\E{{\bf E\,}}

\def\Pr{\mathbb{P}}
\def\rai{\rightarrow \infty}
\newcommand{\ol}[1]{\overline{#1}}
\newcommand{\ul}[1]{\boldsymbol{#1}}
\def\mod{\mbox{\rm mod}\;}
\def\rank{\mbox{\rm rank}}
\def\corank{\mbox{\rm co-rank}}

\def\nul{\mbox{\rm null}}
\def\ooi{(1+o(1))}
\newcommand{\ind}[1]{\,\mathbbm{1}_{\{{#1}\}}}
\newcommand{\indi}[1]{\,{\mbox{\large$\mathbbm{1}$}\{{#1}\}}}

\newcommand{\ignore}[1]{ }
\def\cS{{\mathcal S}}
\def\cF{{\mathcal F}}
\def\a{\alpha}
\def\b{\beta}
\def\d{\delta}
\def\D{\Delta}
\def\e{\varepsilon}
\def\f{\phi}
\def\F{\Phi}
\def\g{\gamma}

\def\k{\kappa}

\def\Th{\Theta}
\def\l{\lambda}

\def\p{\pi}

\def\r{\rho}

\def\s{\sigma}
\def\S{\Sigma}

\def\om{\omega}
\def\Om{\Omega}

\def\cB{{\mathcal B}}

\newtheorem{theorem}{Theorem}

\newtheorem{lemma}[theorem]{Lemma}

\newtheorem{remark}[theorem]{Remark}%

\newcommand{\qchoose}[3]{\mbox{$%
\left[\begin{array}{c}#1\cr#2\end{array}\right]_{#3}$}}

\newcommand{\ra}{\rightarrow}

\newcommand{\beq}[2]{\begin{equation}\label{#1}#2\end{equation}}
\newcommand{\mult}[2]{\begin{multline}\label{#1}#2\end{multline}}

\def\sm{\! \setminus \!}
\def\sd{\! \oplus \!}
\def \ssm{\setminus}

\newcounter{rot}

\def\es{\emptyset}

\def\nn{\nonumber}
\def\wh{\widehat}

\begin{document}

\title{Rank of the vertex-edge incidence matrix of $r$-out hypergraphs}

\author{Colin Cooper\\Department of Informatics\\
King's College\\
London WC2B 4BG\\England
\and Alan Frieze\thanks{Research supported in part by NSF Grant DMS1661063}\\
Department of Mathematical Sciences\\
Carnegie Mellon University\\
Pittsburgh PA15213\\
U.S.A.}
\maketitle

\begin{abstract}
We consider a space of sparse Boolean matrices of size $n \times n$, which
have finite co-rank over $GF(2)$ with high probability. In particular, the probability that such a matrix  has full rank, and is thus invertible, is a positive constant with value about $0.2574$ for large $n$.

The matrices arise as the vertex-edge incidence matrix of 1-out 3-uniform hypergraphs.
The result that the null space is finite, can be contrasted with results for the usual models of sparse Boolean matrices, based on the vertex-edge incidence matrix of random $k$-uniform hypergraphs. For this latter model, the expected co-rank  is linear in the number of vertices $n$, \cite{ACO}, \cite{CFP}.

For fields of higher order, the co-rank is typically Poisson distributed.
\end{abstract}
\section{Introduction}

For   positive integers $r\geq 1,\,s\geq 2$, let $\ul M(s,r,n)$ be the space of $n\times rn$ matrices with entries generated in the following manner. For each $i=1,...,n$ there are $r$ columns $C_{i,j},\;j=1,...,r$. Each  column $ C_{i,j}$ has a unit entry in row $i$, and  $s\!-\!1$ other unit entries, in rows chosen randomly with replacement from $[n]$, or without replacement from $[n]-\{i\}$, all other entries in the column being zero. In general we consider the arithmetic on entries in the matrix, (and thus the evaluation of linear dependencies),  to be over $GF(2)$. If so, in the ``with replacement case'', if two unit entries coincide  the entry is set to zero. When $r=1$, the matrix consists of an identity matrix plus $s\!-\!1$ random units in each column.

If $s=2$, and entries are chosen without replacement, $M$ is the vertex-edge incidence matrix of the random graph $G_{r-\text{out}}(n)$. This model of random graphs has been extensively studied, and  is known to be $r$-connected for $r\geq2$, Fenner and Frieze \cite{FF}, to have a perfect matching for $r\geq2$, Frieze \cite{F}, and to be Hamiltonian for $r\geq 3$, Bohman and Frieze \cite{BF}. If $s\geq3$ we are considering $r$-out, $s$-uniform hypergraphs.
Random Boolean matrices based on the vertex-edge incidence matrix of $s$-uniform hypergraphs where the columns (edges) are chosen i.i.d. from all columns with $s$ ones  were studied by Cooper, Frieze and Pegden, \cite{CFP}.
A very general paper by Coja-Oghlan, Erg\"{u}r, Gao, Hetterich and Rolvien, \cite{ACO}, gives the limiting rank in this latter model for a wide range of assumptions on the distribution of non-zero entries in the rows and columns. The fundamental difference between the $r$-out model of random matrices, and those of \cite{ACO}, \cite{CFP}  is the presence of an $n \times n$ identity matrix as a sub-matrix (in the without replacement case).

A set of vectors is said to be linearly dependent if there is a nontrivial linear combination of the vectors that equals the zero vector. If no such linear combination exists, then the vectors are said to be linearly independent.
The (row) rank $\r$ of an $n \times m$ matrix, $(m \ge n)$ is the maximum number of linearly independent rows, and the co-rank is $n-\r$.
If the field is $GF(2)$,
${\ul x}\in \set{0,1}^n$ is a linear dependency ({\em dependency} for short) if ${\ul x}M=0$. Let $|{\ul x}|=|\set{j:x_j=1}|$. We say that a set of rows $D\subseteq [n]$ is a dependency if $D=\set{j:x_j=1}$ for some dependency ${\ul x}$. An $\ell$-dependency is one where  $|{\ul x}|=\ell$ or $|D|=\ell$.

Of particular interest is the case $r=1$ which gives $n \times n$ Boolean matrices.
We will show that over $GF(2)$, for $r=1, s=3$,  the linear dependencies among the rows of $M$  are w.h.p. either small (bounded in expectation) or large (of size about $n/2$), and  the distributions of these dependencies are somewhat entangled. For $r=1, s=3$, define a Poisson parameter $\f$ for small dependencies. The value of $\f$  differs marginally in summation range between the ``with replacement'' $\f_R$, and ``without replacement'' models $\f_{\ol R}$ as follows:
\begin{equation}\label{phiNR}
\f_R=
\sum_{\ell \ge 1} \frac 1\ell (2e^{-2})^\ell \sum_{j=0}^{\ell-1} \frac{\ell^j}{j!}, \qquad \qquad \f_{\ol R}=
\sum_{\ell \ge 2} \frac 1\ell (2e^{-2})^\ell \sum_{j=0}^{\ell-2} \frac{\ell^j}{j!}.
\end{equation}
The numeric values are  $\f_R\approx 0.5215$, and  $\f_{\ol R} \approx 0.1151$, where $a \approx b$  means approximately equal.

Let $\pi$ be the probability distribution  given by
\begin{equation}
\label{ldef}
\pi(k) = \begin{cases}\;\;\prod_{j=1}^{\infty} \brac{1- \bfrac{1}{2}^j}&k=0.
\\ \;\;
\frac{\prod_{j=k+1}^{\infty} \brac{1-\bfrac{1}{2}^j}}{\prod_{j=1}^{k} \brac{1-\bfrac{1}{2}^j}} \bfrac{1}{2}^{k^2}&k \ge 1.
\end{cases}
\end{equation}
For $0 \le r\le m$ let
\begin{equation}\label{JKM0}
P^*(h,h+r;m)= \qchoose{m}{r}{2}\;\bfrac{1}{2}^{(h+r)(m-r)} \prod_{j=h+1}^{h+r} \brac{1-\bfrac{1}{2}^j},
\end{equation}
where empty products are treated as unity, and $  {\scriptstyle \qchoose{m}{r}{q}}=\frac{(q^m-1)...(q^{m-r+1})}{(q-1)...(q^r-1)}$.
Let
\begin{flalign}\label{Pmj}
P(\s,\l)&=\frac{\f^\s}{\s!}\,e^{-\f} \;
\sum_{r=0}^\s \pi(\l+r) P^*(\l,\l+r,\s),
\end{flalign}
where here and later in the paper, $\s$ indicates the dimension of the space induced by {\em small} dependencies and $\l$ indicates the dimension of the space induced by {\em large} dependencies.
\begin{theorem}\label{TH1}
Let $M$ be chosen u.a.r. from $ \ul M(3,1,n)$. Let $d \ge 0$ be integer.
The limiting probability that, over $GF(2)$, the matrix $M$ has co-rank $d$,
  is given by
\beq{result}{
\lim_{n\to\infty}\Pr(\corank(M)=d)= \sum_{\s=0}^d P(\s,d-\s).
}
In particular,
\[
\Pr(\rank(M)=n) \sim P(0,0)= e^{-\f}\p(0)=e^{-\f}\prod_{j=1}^\infty  \brac{1-\bfrac{1}{2}^j}.
\]
\end{theorem}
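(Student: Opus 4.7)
The plan is to decompose the left-null space $\cL$ of $M$ over $GF(2)$ into contributions from ``small'' and ``large'' dependency structures and to compute the joint distribution of their dimensions. A vector $\ul x \in \{0,1\}^n$ lies in $\cL$ iff, for every edge $e_i = \{i, a_i, b_i\}$ of the 1-out 3-uniform hypergraph underlying $M$, $x_i + x_{a_i} + x_{b_i} \equiv 0 \pmod 2$; equivalently, the support $D = \{i : x_i = 1\}$ meets every edge in an even number of vertices. The first step is the small/large dichotomy: a first-moment bound over subsets of prescribed size $\ell$ should show that the expected number of dependencies with $\omega(1) \le \ell \le n/2 - \omega(\sqrt n)$ is $o(1)$, so with high probability every dependency has either $|D| = O(1)$ (``small'') or $|D| = n/2 + O(\sqrt n)$ (``large'').

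For the small part, I would classify the minimal small-dependency sets $D$: bounded-size vertex subsets in which every $v \in D$ has exactly one of its two companions $a_v, b_v$ in $D$, and no $u \notin D$ has exactly one of $a_u, b_u$ in $D$. Such sets correspond to short cycles in the 2-out digraph $i \mapsto \{a_i, b_i\}$, possibly decorated by tree pieces whose vertices appear in pairs; this is where the internal sum $\sum_j \ell^j/j!$ in (\ref{phiNR}) enters. For each isomorphism type the expected count converges to an explicit constant, and standard method-of-moments computations yield joint Poisson convergence of the counts. Summing intensities over all types recovers $\f$ as defined, with the distinction between $\f_R$ and $\f_{\ol R}$ coming from the degenerate $\ell = 1$ configurations available only in the with-replacement model. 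Consequently, the span $S$ of the small dependencies has $\dim S = \s$ asymptotically $\mathrm{Poisson}(\f)$, producing the factor $\f^\s e^{-\f}/\s!$ in $P(\s, \l)$.

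For the large part, I would argue that after projecting out the deterministic effect of the small structures, the residual linear system behaves, in the relevant distributional sense, like a uniformly random square Boolean matrix; its co-rank accordingly converges to the $GF(2)$ Cohen--Lenstra law $\pi$ of~(\ref{ldef}) and equals $\l + r$ with probability $\pi(\l + r)$. This step is the main obstacle, since one must extract genuine random-matrix behavior from the sparse, structured core, plausibly by a coupling or switching argument that couples the residual matrix row-by-row with uniform i.i.d.\ $GF(2)^n$ rows. Finally, a canonical lift of a basis of the large null space into $GF(2)^n$ yields an $(\l + r)$-dimensional subspace $L$ of $\cL$; the overlap $\dim(S \cap L) = r$ is governed by a lifting map $L \to GF(2)^\s$ that is asymptotically a uniformly random $(\l + r) \times \s$ matrix over $GF(2)$, and so has rank exactly $r$ with probability $P^*(\l, \l + r, \s)$ (a direct computation identifies this formula with $\Pr(\mathrm{rank} = r)$ for such a random matrix). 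Since then $\dim(S + L) = \s + \l = d$, summing over admissible $r \in \{0, \ldots, \s\}$ yields $P(\s, \l)$ as in~(\ref{Pmj}), and summing over $\s + \l = d$ produces~(\ref{result}).
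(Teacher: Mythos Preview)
Your high-level architecture matches the paper: the small/large dichotomy via first moment, the Poisson law for small fundamental dependencies with parameter $\f$, the $\pi$ law for the large part, and the interaction term $P^*$. Your identification of $P^*(\l,\l+r;\s)$ with the probability that a uniformly random $(\l+r)\times\s$ matrix over $GF(2)$ has rank $r$ is correct and is a clean way to read formula~\eqref{JKM0}.

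The gap is in your treatment of the large part. You propose to show that, after stripping out the small structures, the residual system ``behaves, in the relevant distributional sense, like a uniformly random square Boolean matrix,'' via a row-by-row coupling or switching argument. This is not what the paper does, and it is hard to see how such a coupling could work: $M'$ is still a sparse matrix with exactly three ones per column and $\mathrm{Poisson}(3)$-like row weights, so its rows are nowhere near uniform in $GF(2)^{n'}$. The paper's route is entirely different and avoids any comparison to dense random matrices. It introduces \emph{simple $k$-sequences} $(B_1,\ldots,B_k)$ of large dependencies (all $2^k-1$ nonempty symmetric differences lying in $J_1$), proves by a delicate multinomial calculation that the expected number $\E(\bX_k)$ of such sequences is $\sim 1$ for every fixed $k$ (Lemma~\ref{EXk}), and then observes that conditionally on the large null space having dimension $\l$ one has $\bX_k=\prod_{i=0}^{k-1}(2^\l-2^i)$. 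Writing $q_\l=\Pr(\dim=\l)$, the identities $1\sim\sum_{\l\ge k}q_\l\prod_{i=0}^{k-1}(2^\l-2^i)$ form an infinite triangular system which is inverted explicitly, via Gaussian-binomial identities, to yield $q_\l=\pi(\l)$. This moment method on ordered bases is the engine that produces $\pi$; your plan does not contain it.

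The interaction is also handled in the opposite direction from what you sketch. Rather than lifting the large null space of $M$ and measuring its projection onto a $\s$-dimensional ``small'' space, the paper removes the $\s$ small fundamental dependencies to get an honest instance $M'\in\ul M(n')$, applies the $\pi$ law there (where there are no small dependencies), and then adds the small blocks back one at a time. Each reinsertion of a small dependency $D$ either leaves the current simple sequence intact (with probability $\sim 2^{-k'}$, coming from a parity condition along the unique cycle of $D$, Lemma~\ref{lemma1}) or kills exactly one dimension. Summing over the $\binom{m}{r}$-indexed $0$--$1$ sequences of outcomes and recognising the Gaussian binomial as the inversion generating function gives~\eqref{JKM}. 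Your random-matrix picture of the interaction may well be equivalent at the level of the final formula, but the mechanism you would need to justify it (that the lifting map is asymptotically uniform) is again the missing piece, and the paper's sequential parity argument is what actually carries the weight.
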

Theorem \ref{TH1} differs from many previous results on sparse random Boolean matrices. The co-rank (dimension of the null space) is finite, and the matrix is invertible with probability $e^{-\f}\pi(0)$, where $\pi(0) \approx 0.2888$. The problem  can be seen as an
instance of the change in rank, if any, arising from small perturbations of the identity matrix.

The finite co-rank given in Theorem \ref{TH1} can be contrasted with results for the edge-vertex incidence matrix of random hypergraphs,
(\cite{ACO}, \cite{CFP}), where the expected co-rank is linear in  the number of vertices $n$, and the probability of a full rank matrix is exponentially small.

The joint distribution of co-rank given by \eqref{Pmj} is a   mixture of a Poisson with parameter $\f$ given in \eqref{phiNR}, and the distribution $\pi$ given in \eqref{ldef}.
This mixture arises due to a gap property in the size of the dependencies (small or large), which we next explain.

\ignore{
Let ${\ul x}\in \set{0,1}^n$ be a {\em dependency} if ${\ul x}M=0$. Let $|{\ul x}|=|\set{j:x_j=1}|$. We say that a set of rows $D\subseteq [n]$ is a dependency if $D=\set{j:x_j=1}$ for some dependency ${\ul x}$. An $\ell$-dependency is one where  $|{\ul x}|=\ell$ or $|D|=\ell$.
}

\begin{theorem}\label{th1}
Let $M$ be chosen u.a.r. from $ \ul M(3,1,n)$, then
w.h.p. either (i) a dependency ${\ul x}$ is {\em small} i.e. $|{\ul x}|\leq \om$ where $\om\to\infty$ slowly or (ii) ${\ul x}$ is {\em large} i.e. $|{\ul x}|=n/2+O(\sqrt{n\log n})$.
\end{theorem}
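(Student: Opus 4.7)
The approach is a first-moment calculation for the number $X_\ell$ of $\ell$-dependencies, followed by summing over the forbidden ranges and applying Markov's inequality. A vector $\ul x$ with support $S$ of size $\ell$ is a dependency iff for every column $j$,
\[
x_j + x_{a_j} + x_{b_j} = 0 \pmod 2,
\]
where $a_j, b_j$ are the two random row indices of column $j$. Splitting on whether $j \in S$, this asks that exactly one of $a_j, b_j$ lies in $S$ (if $j \in S$), or that both or neither does (if $j \notin S$). Since the columns are independent, writing $p = \ell/n$ and $q = 2p(1-p)$ gives
\[
\Pr(S \text{ is a dependency}) = (1+o(1))\, q^\ell (1-q)^{n-\ell},
\]
the $(1+o(1))$ absorbing $O(1/n)$ corrections from the $a_j \ne b_j \ne j$ constraint (without replacement) or the coincidence adjustment (with replacement). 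Hence $\E X_\ell = (1+o(1)) \binom{n}{\ell} q^\ell (1-q)^{n-\ell}$.

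Next, introduce the exponential rate function
\[
g(p) := -p\log p - (1-p)\log(1-p) + p\log q + (1-p)\log(1-q).
\]
A direct computation gives $g(0) = g(1/2) = 0$, $g'(1/2) = 0$, and the Taylor expansion $g(1/2 + \delta) = -2\delta^2 - 8\delta^3 + O(\delta^4)$. A routine analysis of $g'(p) = \log\brac{\frac{(1-p)q}{p(1-q)}} + (2-4p)\brac{\frac{p}{q} - \frac{1-p}{1-q}}$ then verifies that $g(p) < 0$ strictly on $(0, 1/2) \cup (1/2, 1)$ and $g(p) \to -\infty$ as $p \to 1$. Applying Stirling yields $\E X_\ell = \Th\brac{e^{n g(\ell/n)}/\sqrt n}$ uniformly for $\ell$ bounded away from $\set{0,n}$, and the sharper form $\E X_\ell = \Th((2/e)^\ell/\sqrt \ell)$ for $\ell = o(\sqrt n)$.

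The remainder is a case analysis on $\ell$. \emph{(i)} For $\om \le \ell \le \sqrt n$, geometric decay gives $\sum_\ell \E X_\ell = O((2/e)^{\om}) \to 0$. \emph{(ii)} For $\sqrt n < \ell \le (1/2-\e)n$ or $(1/2+\e)n \le \ell \le n$, compactness and strict negativity of $g$ yield $g(\ell/n) \le -c(\e) < 0$, so $\sum \E X_\ell \le n e^{-c(\e)n} \to 0$. \emph{(iii)} For $\ell$ with $C\sqrt{n\log n} \le |\ell - n/2| \le \e n$, the Taylor expansion gives $n g(\ell/n) \le -2(\ell-n/2)^2/n + o(1)$, and a Gaussian tail bound yields $\sum \E X_\ell = O(n^{-2C^2 + o(1)})$ once $C$ is large enough. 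Markov and a union bound then show that w.h.p.\ every dependency has $|\ul x| \in [0, \om] \cup [n/2 - C\sqrt{n\log n}, n/2 + C\sqrt{n\log n}]$, proving the dichotomy.

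The principal technical input is the global negativity of $g$ on $(0,1/2) \cup (1/2, 1)$: because $g$ vanishes at $0$ and $1/2$ with $g'(1/2) = 0$ as well, one must rule out additional interior critical points of $g$ with $g(p) \ge 0$ via an explicit derivative/convexity argument on the formula above. The rest is routine bookkeeping, and the with-replacement model is handled identically, producing the Poisson parameter $\f_R$ in place of $\f_{\ol R}$.
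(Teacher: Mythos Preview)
Your approach is essentially the same first-moment computation the paper carries out in Section~\ref{LargeFM} (and the tail of Section~\ref{SmallLD}), just repackaged through the rate function $g(p)$. The paper instead slices the range $[\om,n]\setminus J_1$ into $F_1=\{\om,\ldots,3n/10\}$, $F_2=\{7n/10,\ldots,n\}$, $F_3=F\setminus(F_1\cup F_2)$ and bounds each piece by an explicit elementary inequality rather than by a global sign analysis of $g$.

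There is one genuine slip in your case decomposition. In case~(ii) you claim that ``compactness and strict negativity of $g$ yield $g(\ell/n)\le -c(\e)<0$'' for $\sqrt n<\ell\le(1/2-\e)n$. But on that range $p=\ell/n$ runs from $1/\sqrt n$ upward, so $p$ is \emph{not} bounded away from $0$, and since $g(0)=0$ no uniform lower bound $g\le -c$ is available; your bound $\sum_\ell \E X_\ell\le n e^{-c(\e)n}$ is therefore unjustified as written. The fix is easy: from $g(p)\sim p(\log 2-1)$ as $p\to 0$ together with strict negativity on $(0,1/2-\e]$, compactness applied to $g(p)/p$ gives $g(p)\le -c_0 p$ on that interval, hence $n g(\ell/n)\le -c_0\ell$ and $\sum_{\ell>\sqrt n}\E X_\ell\le \sum_{\ell>\sqrt n}e^{-c_0\ell}=o(1)$. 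This is exactly what the paper's $F_1$ estimate is doing in disguise.

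Two smaller remarks. First, the ``routine derivative/convexity'' verification that $g<0$ on $(0,1/2)\cup(1/2,1)$ can be replaced by the one-line observation $g(p)=-D_{\mathrm{KL}}\!\big(\mathrm{Ber}(p)\,\|\,\mathrm{Ber}(q)\big)$ with $q=2p(1-p)$, which is strictly negative unless $p=q$, i.e.\ $p\in\{0,1/2\}$; this sidesteps the critical-point analysis you flag as the principal technical input. Second, your closing sentence about $\f_R$ versus $\f_{\ol R}$ is off-topic here: those parameters concern the Poisson count of small dependencies (Section~\ref{SmallLD}), not the gap statement of Theorem~\ref{th1}.
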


 A  gap property  in
solutions to random XOR-SAT systems over $GF(2)$ was previously observed by
Achiloptas and Molloy \cite{AMo}, and by Ibrahimi, Kanoria, Kraning and Montanari \cite{IKKM}. They found that
the Hamming distance between XOR-SAT solutions was either $O(\log n)$ or at least $\a n$; where $n$ is the number of variables. In our case,  large dependencies  have intersection about $n/4$ (see Section \ref{back}), giving a precise value of $\a$.

Estimating the interaction between small and large dependencies is the main problem we solve. The negative correlation between the two types of  dependency is characterized by the binomial term in \eqref{Pmj}.

A dependency $\ul x$ is {\em fundamental} if there is no other dependency ${\ul y}\neq {\ul x}$ such that ${\ul y}\leq{\ul x}$, componentwise. We will prove in Section \ref{SmallLD} that the number $Z$ of fundamental small dependencies is asymptotically distributed as $Po(\f)$ i.e. Poisson with mean $\f$.
The quantity $P(\s,\l)$ in \eqref{result} is the limiting probability that small dependencies span a space of dimension $\s$, and large dependencies  span a space of dimension $\l$.

The distribution $\pi(k)$ given in \eqref{ldef} was previously observed in a model of random matrices over $GF(2)$ in which the entries $m_{i,j}$  are i.i.d Bernoulli random variables with $\Pr(m_{i,j}=1)=p$. For a wide range of $p$ the distribution of dimension $k$ of the null space  is given by $\pi(k)$.
The result was proved  by Kovalenko et al., \cite{KLS} for $p=1/2$, and extended
to the range $\min(p(n),1-p(n)) \ge (\log n +c(n))/n$, (where $c(n) \rai $ slowly)  by Cooper \cite{CCGF2}.
A similar result holds for the model of random matrices over the finite field $GF(t)$, see Cooper \cite{CCGFt}. Here the non-zero entries $\a \in GF(t)\sm\{0\}$ are independently and uniformly distributed with $\Pr(m_{i,j}=\a)=p/(t-1)$.  The distribution of co-rank $\pi_t(k)$ equivalent to  $\pi(k)=\pi_2(k)$ in \eqref{ldef} is obtained by directly replacing the $(1/2)$ terms in \eqref{ldef}  by $(1/t)$.

Finally we consider some related cases for $r$-out $s$-uniform hypergraphs.
For $r=1$ and $s=2$, $M$ has expected rank $\sim n- (\log n)/2$. This is because the expected number of components in a random mapping is $\sim (1/2) \log n$, (see e.g., \cite{FK}). Note: For $s$ even, the rows of $M$ add to zero modulo 2. The following theorem will be immediate from the proof of Theorem \ref{TH1}.
\begin{theorem}\label{th3}
If $r\ge 2$ and $s = 2,\;3$, then $M$ has rank $n^*=n-\ind{s=2}$, w.h.p.
\end{theorem}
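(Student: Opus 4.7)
The plan is to treat the two cases separately.

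For $s = 2$, $r \ge 2$: over $GF(2)$, a left null vector $\ul x$ of the vertex-edge incidence matrix of a graph $G$ satisfies $x_u = x_v$ on every edge, so $\ul x$ is constant on each connected component and $\corank(M) = c(G)$. By Fenner and Frieze~\cite{FF}, $G_{r\text{-out}}(n)$ is $r$-connected (hence connected) w.h.p.\ for $r \ge 2$, so $c(G) = 1$ and $\rank(M) = n - 1 = n - \ind{s=2}$ w.h.p. In the with-replacement variant, at most $O(r)$ columns are identically zero (those where the random entry coincides with row $i$), and this does not affect connectivity or rank.

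For $s = 3$, $r \ge 2$: I show that the expected number of nontrivial dependencies is $o(1)$, so by Markov's inequality none exists w.h.p.\ and $\rank(M) = n$. The $rn$ columns decompose into $n$ independent groups of $r$, one per vertex, so for $D \subseteq [n]$ with $|D| = \ell$ and $p = \ell/n$ the event that $D$ is a dependency is a product of $rn$ independent parity constraints, satisfied with probability $\sim 2p(1-p)$ per column at $i \in D$ and $\sim p^2 + (1-p)^2$ per column at $i \notin D$. Hence
\[
\E\bigl[\#\{\ell\text{-dependencies}\}\bigr] \;=\; (1+o(1))\binom{n}{\ell}\bigl(2p(1-p)\bigr)^{r\ell}\bigl(p^2 + (1-p)^2\bigr)^{r(n-\ell)}.
\]
This is exactly the $r=1$ first moment with each column factor raised to the $r$-th power. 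A direct evaluation for $r = 1$, of the kind carried out in the proof of Theorem~\ref{TH1}, shows that the sum over $\ell$ is $O(1)$ and is concentrated at $\ell = n/2 + O(\sqrt{n\log n})$, where both column probabilities equal $\tfrac{1}{2}(1+o(1))$; replacing $r = 1$ by $r \ge 2$ therefore introduces a uniform factor of $2^{-(r-1)n + o(n)}$ in this dominant regime, driving it to $o(1)$. The small-$\ell$ tail is handled by the crude bound $\binom{n}{\ell}(2\ell/n)^{r\ell} = O(n^{(1-r)\ell}) = o(1)$ for each fixed $\ell \ge 2$, and $\ell = 1$ is impossible because column $i$ contains $e_i$. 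Summing over all $\ell$ yields $o(1)$.

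The main obstacle is uniform control of the per-column rate function
\[
g_1(p) \;=\; H(p)\ln 2 + p \ln(2p(1-p)) + (1-p)\ln(p^2 + (1-p)^2),
\]
namely that $g_1(p) \le 0$ on $[0,1]$ with its unique interior maximum $0$ attained at $p = 1/2$ (a calculus fact implicit in the $r=1$ analysis; near $p=0$ one has $g_1(p) \sim (\ln 2 - 1)p < 0$). Given this, the $(r-1)$-fold exponent penalty from raising the column factors to the $r$-th power is $\le -(r-1)\ln 2 + o(1)$ near $p = 1/2$ and bounded away from $0$ uniformly on any $[\varepsilon,1-\varepsilon]$, so the first moment is exponentially small for every $\ell$ not already handled by the small-$\ell$ bound, and Theorem~\ref{th3} follows.
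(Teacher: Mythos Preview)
Your proof is correct, and in both cases takes a different route from the paper.

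For $s=2$ the paper does not appeal to connectivity at all; it bounds the expected number of fundamental $\ell$-dependencies for $r=1$ by $1/\ell$ (via the connected-functional-digraph count) and then observes that for $r=2$ each such dependency acquires an extra factor $((\ell-1)/(n-1))^\ell$, so the sum over $2\le\ell\le n/2$ is $O(n^{-2})$. Your argument---that the left null space of a graph's incidence matrix over $GF(2)$ is spanned by component indicators, and Fenner--Frieze gives connectivity---is shorter and uses machinery already cited in the introduction. It is also uniform in $r\ge 2$ without further work, whereas the paper explicitly treats $r=2$.

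For $s=3$ the paper's argument is conceptual and one line: by Theorem~\ref{TH1} the first block of $n$ columns leaves a null space of bounded dimension w.h.p., and the second block of $n$ independent columns then kills each of the $O(1)$ surviving nonzero vectors (each survives with probability $(2p(1-p))^\ell(p^2+(1-p)^2)^{n-\ell}=o(1)$, whether $\ell$ is small or $\ell\sim n/2$). Your approach bypasses Theorem~\ref{TH1} and goes directly through the first moment for general $r$, noting that $\E X_\ell^{(r)}=\E X_\ell^{(1)}\cdot[\Phi_\ell^n]^{r-1}$ with $\Phi_\ell^n\le 1$ everywhere; so the medium range is bounded by the $r=1$ sum (which is $O(1)$ by Lemma~\ref{Th1}), the $J_1$ window picks up a genuine $2^{-(r-1)n}$ penalty, and the small-$\ell$ terms are $O(n^{-(r-1)\ell})$. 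Your approach is more self-contained; the paper's leverages the hard work already done for Theorem~\ref{TH1} and is shorter once that is in hand. One cosmetic point: your discussion of the rate function $g_1$ and its sign is correct but not strictly needed---the cruder observation $\Phi_\ell^n\le 1$ plus the $r=1$ first-moment results already close the argument.
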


Results for other finite fields follow easily from the analysis over $GF(2)$. We use the non-standard notation $GF(t)$ for a finite field of order $t$, rather than the usual $GF(q)$; and for brevity we consider only the \lq without replacement\rq ~case.  We consider three simple models with  entries from  the non-zero elements  of $GF(t)$, in each column. Let $\{f_i\}$ be a distribution on $i \in GF(t), i \ne 0$.
\begin{enumerate}[Model 1:]
\item The diagonal and other two non-zero entries in a column are 1.
\item The diagonal entries are 1, and the two other non-zero entries in each column are drawn u.a.r. from the  distribution $\{f_i\}$.
\item The diagonal and other two non-zero entries in each column are drawn u.a.r. from the uniform distribution $\{f_i\}$.
\end{enumerate}
For Model 2, let $\g=f_{t-1}$, $\a=\sum f_if_{t-i-1}$.
For Model 3, let $\g= \sum_i f_i f_{t-i}$, $\a=\sum_{i+j+k=0} f_if_jf_k$.

Let $\f_t$ be given by
\begin{equation}\label{fit}
\f_t=\sum_{\ell \ge 2} \frac 1\ell \brac{2\g e^{-2}}^\ell \sum_{i=0}^{\ell-2} \frac{\ell^i}{i!}.
\end{equation}
Because $M$ has 3 entries in each column, there is a special case of Model 1 for $GF(3)$.

 \begin{theorem}\label{TH2}
The following asymptotic results hold over $GF(t)$.
\begin{enumerate}
\item Model 1:
 If $t=3$ the limiting probability that $M$ has rank $n-1$ is 1, and $M$ has rank $n$ otherwise.
\item Models 2 and 3: If $t \ge3$ then provided $\a \le 2\g \le 1$,
\[
\Pr(\rank(M)=n-d) \sim \frac{\f_t^d}{d!} \;e^{-\f_t}.
\]
\end{enumerate}
\end{theorem}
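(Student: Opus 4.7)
The approach is to run the same machinery that will be developed for $GF(2)$ in the proof of Theorem \ref{TH1} inside $GF(t)$, tracking how the weight factors $\g,\a$ enter the calculations. For Model 1 at $t=3$, each column has three ones and $3\equiv 0$ in $GF(3)$, so $\mathbf 1 M=\mathbf 0$ deterministically and $\rank M\le n-1$; it then remains to show that no dependency linearly independent of $\mathbf 1$ survives w.h.p., giving $\rank M = n-1$. For Model 1 with $t\ne 3$ the all-ones vector is not a dependency, and we need only rule out random dependencies.

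For Models 2 and 3 (and simultaneously for Model 1 with $t\ne 3$, which corresponds to $\g=\a=0$), the first substantive step is to enumerate fundamental small dependencies. A dependency $\ul x$ of support $S$, $|S|=\ell$, forces the column hyperedges $e_c=\{c,u_c,v_c\}$ to satisfy $|e_c\cap S|\in\{0,2\}$ for $c\notin S$ and $|e_c\cap S|\in\{2,3\}$ for $c\in S$ -- the same combinatorial condition as over $GF(2)$. The new ingredient in $GF(t)$ is a weight constraint at each active column: a column with exactly one $S$-vertex among $\{u_c,v_c\}$ forces a single random $f$-distributed entry to equal a specified value, giving a factor $\g$; a column with both $u_c,v_c\in S$ forces a two-coefficient coincidence of probability $\a$. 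Propagating these factors through the $GF(2)$ enumeration replaces the constant $2$ in $\f_{\ol R}$ by $2\g$ and produces \eqref{fit}, and a factorial-moments calculation (analogous to the one for Theorem \ref{TH1}) upgrades the count to asymptotic $\mathrm{Po}(\f_t)$.

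The principal obstacle is ruling out \emph{large} dependencies, since over $GF(2)$ a rich family of size $\sim n/2$ exists. I would use a first-moment bound: for $\om\le\ell\le n$ with $\om\to\infty$ slowly and $p=\ell/n$, the expected number of dependency pairs $(S,\ul x)$ with $|S|=\ell$ is bounded by an expression of the shape
\[
\binom{n}{\ell}(t-1)^\ell\,\bigl((1-p)^2+2p(1-p)\g+p^2\a\bigr)^{n-\ell}\bigl(2p(1-p)\g+p^2\a+O(p^3)\bigr)^{\ell}.
\]
The per-column factors combine the structural probability that $u_c,v_c$ fall into an admissible pattern with the weight probability that the $f$-distributed entries satisfy the required linear equation; the hypothesis $2\g\le 1$ is calibrated so that the second bracket decays uniformly in $\ell\in[\om,n]$, and $\a\le 2\g$ controls merge-type contributions. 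A routine optimisation over $\ell$ shows the total is $o(1)$, so no large dependency occurs w.h.p. In Model 1 at $t=3$, where $\a\le 2\g$ fails, the deterministic $\mathbf 1$-dependency must be isolated by hand: restricting the first-moment bound to vectors $\ul x$ not proportional to $\mathbf 1$ removes the offending contribution and the same estimate goes through.

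Assembling the pieces, $\corank(M)$ equals the number of fundamental small dependencies w.h.p., which is $\mathrm{Po}(\f_t)$, giving Part 2 of the theorem; Part 1 follows by specialising $\g=\a=0$ for $t\ne 3$ (so $\f_t=0$ and the Poisson mass is at $0$) and adding the single deterministic dependency for $t=3$.
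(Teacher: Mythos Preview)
Your high-level plan (Poisson for small dependencies, first moment to kill large ones) is exactly the paper's. The substantive gap is the $(t-1)^\ell$ factor you insert to sum over all coefficient vectors $\ul x$. With that factor the large-$\ell$ bound does not die: at $\ell=cn$ the exponential base acquires an extra $(t-1)^c$, and already for $t=3$ with uniform $f$ at $c=1/2$ the base exceeds $1$, so your first moment is exponentially large, not $o(1)$. The paper sidesteps this completely by \emph{not} summing over $\ul x$: its $\E X_\ell$ in \eqref{GFtq} is the expected number of sets $L$ with $\sum_{i\in L}\text{row}_i=\mathbf 0$, i.e.\ dependencies with the single coefficient vector $\mathbf 1_L$. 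That is why $\g,\a,\b$ are defined as they are (the probability that a random entry equals $-1$, that two sum to $-1$, that two sum to $0$): these are precisely the constraints for $\ul x=\mathbf 1_L$. Your assertion that ``a column with exactly one $S$-vertex forces a factor $\g$'' is only correct for this canonical $\ul x$; for general $\ul x$ the factor is $f_{-x_c M_{c,c}/x_u}$, which depends on $\ul x$ and need not equal $\g$ unless $f$ is uniform.

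There is also a smaller slip in your displayed bound. For a column $c\notin S$ with exactly one random vertex $u\in S$, the constraint is $x_u a_u=0$ with $x_u,a_u\ne 0$, which is impossible; so the contribution is $0$, not $2p(1-p)\g$. The correct factor outside $S$ is $(1-p)^2+\b p^2$ with $\b=\sum_i f_if_{t-i}$, as in the paper. Finally, for Model~1 at $t=3$ (where $\g=0$, $\a=1$ and your hypothesis $\a\le 2\g$ fails), the paper does not ``restrict to $\ul x$ not proportional to $\mathbf 1$''; it simply notes that for the row-sum count one gets $D^cG^{1-c}=c^c(1-c)^{1-c}<1$ on $(0,1)$, so the only surviving row-sum dependency is $L=[n]$ itself.
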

In the simplest case  where entries are sampled uniformly from the non-zero elements of $GF(t)$, Theorem \ref{TH2}.2 holds for Models 2, 3 with $\g=1/(t-1)$.

{\bf Notation:}
Apart from $O(\cdot),o(\cdot),\Om(\cdot)$  as a  function of $n \rai$, we use the  notation $A_n \sim B_n$ if $\lim_{ n \rai} A_n/B_n=1$. The symbol $a \approx b$ indicates approximate numerical equality due e.g., to decimal truncation. The notation $\om(n)$ describes a function tending to infinity as $n \rai$. The expression {\em with high probability} (w.h.p.),   means with probability $1-o(1)$, where the $o(1)$ is a function of $n$, which tends to zero as $n \rai$.

\subsection*{Outline of the proof for $GF(2)$ with $r=1,s=3$}
Because the proofs are rather technical, {\em we give a detailed proof in the ``with replacement'' model}, and indicate separately in Section  \ref{ConV} why these results are also valid in the ``without replacement'' model. The difference in the range of summation indices for $\f_{\ol R}$ is explained in detail in Section \ref{SNRep}.

We refer to the rows of $M$ as $M_i,i\in [n]$ and to the columns as $C_j,j\in [n]$. By a set of rows $S$, we mean the set of rows $M_i,i\in S$.  A set of rows with  indices $L$ is linearly dependent (zero-sum) if $\sum_{i \in L} M_i=0 (\mod 2)$. A linear dependence  $L$  is {\em small} if  $|L| \le \om$, where $\om=\om(n)$ is a function tending slowly to infinity with $n$. A linear dependence $L$ is {\em large} if $|L| =(n/2)(1+O(\sqrt{\log n/n}))$. As part of our proof, we show that w.h.p. there are no other sizes of dependency. A set of zero-sum rows  $L$ is {\em fundamental} if $L$ contains no smaller zero-sum set and is disjoint from all other zero-sum sets. The zero-sum sets of size about $n/2$ are not disjoint. We count $k$-sequences of large dependencies with a property we call {\em simple}. Many of the problems with the proofs arise because the  large dependencies are not disjoint,  and are conditioned by the simultaneous presence of small linear dependencies in $M$.

We next outline the main steps in the proof of Theorem \ref{TH1}.
\begin{enumerate}
\item In Section \ref{SmallLD} we prove that the number $Z$ of small fundamental dependencies has factorial moments $\E (Z)_k \sim \f^k$, where $\phi$ is given by \eqref{phiNR}. Thus $Z$ is asymptotically Poisson distributed and
\[
\Pr\brac{\text{$M$ has $i$ small fundamental linear dependencies $\sim\frac{\f^i}{i!}e^{-\f}$}}.
\]
\item For $M \in \ul M(3,1,n)$ w.h.p. any fundamental sets of zero-sum rows of $M$ are either small (of size $\ell \le \om$) or large (of size $\ell =(n/2)(1+O(\sqrt{\log n/n}))$). This is proved in Section \ref{LargeFM}.
\item In Section \ref{HM} we discuss {\em simple} sequences of large dependencies, and in Section \ref{ESL} we estimate the moments of these sequences and determine their interaction with small dependencies.
\item We estimate the number of simple sequences, conditional on the the number of small fundamental dependencies. This leads to an approximate set of linear equations whose solution completes the proof of Theorem \ref{TH1}.
\end{enumerate}

\section{Small linear dependencies in $GF(2)$: with replacement}\label{SmallLD}

\paragraph{Notation}
 For $1\leq k \leq \om$, where $\om \rai$ arbitrarily slowly with $n$, let $X_k(M)$ or $Y_k(M)$ denote the number of index sets of $k$-dependencies in $M$. A $k$-dependency is {\em small} if $k \le \om$ and we use $Y_k$ when $k\leq \om$ and  use $X_k$ when $k \sim n/2$.
 We will show that for other values of $k$, $X_k=0$ w.h.p. We also use $Z_d,d\leq \om$ to denote the number $d$ of fundamental (minimal) dependent sets among the  rows of $M$.

We first consider dependencies with $s=o(n^{1/2})$ rows. For $S\subseteq [n]$, let $\cF(S)$ denote the event that the rows corresponding to $S$ are dependent. Let $Y_s$ denote the number of $s$-set dependencies.
\begin{lemma}
If $|S|=s=o(n^{1/2})$ then
\begin{equation}\label{EYL}
\Pr(\cF(S))\sim \bfrac{2s}{n}^s e^{-2s}.
\end{equation}
If $\omega\to\infty$, $\om\leq s=o(n^{1/2})$ then $Y_s=0$ w.h.p.
\end{lemma}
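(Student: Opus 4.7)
The plan is to compute $\Pr(\cF(S))$ exactly via column-independence. In the with-replacement model the columns $C_j$ of $M$ are mutually independent, and the row-$i$ entry of $C_j$ is $\ind{i=j}+\ind{i=R_1^{(j)}}+\ind{i=R_2^{(j)}}\pmod 2$, where $R_1^{(j)},R_2^{(j)}$ are independent uniform samples from $[n]$. Summing over $i\in S$, the event that column $j$ contributes $0$ to the row sum is $\ind{j\in S}+\ind{R_1^{(j)}\in S}+\ind{R_2^{(j)}\in S}\equiv 0\pmod 2$. For $j\in S$ this forces exactly one of $R_1^{(j)},R_2^{(j)}$ to lie in $S$, with probability $p_1:=2s(n-s)/n^2$; for $j\notin S$ we need zero or two, with probability $1-p_1$. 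Column-independence then yields
\[
\Pr(\cF(S))=\brac{\frac{2s(n-s)}{n^2}}^{\!s}\brac{1-\frac{2s(n-s)}{n^2}}^{\!n-s}.
\]

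Under $s=o(n^{1/2})$ the asymptotic \eqref{EYL} follows by routine simplification: $(1-s/n)^s=1+o(1)$ collapses the first factor to $(2s/n)^s(1+o(1))$, and expanding the logarithm of the second factor gives $e^{-2s(n-s)^2/n^2+O(s^3/n^2)}=e^{-2s+o(1)}$, since $2s(n-s)^2/n^2=2s+O(s^2/n)$.

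For the second statement I will apply the first moment method. Using $\binom{n}{s}\sim n^s/s!$ (valid since $s=o(n^{1/2})$) together with Stirling's formula,
\[
\E Y_s=\binom{n}{s}\Pr(\cF(S))\sim \frac{(2s)^s e^{-2s}}{s!}\sim \frac{(2/e)^s}{\sqrt{2\pi s}}.
\]
Because $2/e<1$ the right-hand side is geometrically summable, so $\sum_{\om\le s=o(n^{1/2})}\E Y_s\to 0$ as $\om\rai$; Markov's inequality then forces $Y_s=0$ simultaneously throughout this range w.h.p. The only delicate point is the uniformity of the $o(1)$ error in the asymptotic for $\Pr(\cF(S))$ across the full range of $s$, which reduces to the bound $p_1=O(s/n)$ with an explicit Taylor remainder for $\log(1-p_1)$; this is routine bookkeeping and presents no real obstacle.
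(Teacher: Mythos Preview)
Your proof is correct and follows essentially the same approach as the paper: both compute $\Pr(\cF(S))$ column-by-column via the same two-case split ($j\in S$ versus $j\notin S$), obtain $\E Y_s \sim (2s)^s e^{-2s}/s!$, and deduce the second claim from the geometric decay $(2/e)^s$ via the first moment. One minor slip: the Taylor remainder in the exponent of the second factor is $O(s^2/n)$ rather than $O(s^3/n^2)$, but both are $o(1)$ for $s=o(n^{1/2})$, so this is harmless.
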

\begin{proof}
Suppose that $s=o(n^{1/2})$ and $S=[s]$. Then,
\begin{flalign}
\Pr(\cF(S))= & \brac{2 \bfrac{s}{n} \bfrac{n-s}{n}}^s \brac{\bfrac{s}{n}^2 + \bfrac{n-s}{n}^2}^{n-s} \nonumber\\
&\sim \bfrac{2s}{n}^s e^{-2s},\qquad \text{using }s=o(\sqrt n).\label{EXL}
\end{flalign}
{\bf Explanation:}
The probability that exactly one of the two random choices in a column of $S$ lies in a row of $S$ is $2 \bfrac{s}{n} \bfrac{n-s}{n}$.
The probability that both or neither of the two random choices in a column of $[n]\setminus S$ lies in a row of $S$ is $\bfrac{s}{n}^2 + \bfrac{n-s}{n}^2$.

This verifies \eqref{EYL}. It follows that
\[
\E(Y_s)\sim\binom{n}{s} \bfrac{2s}{n}^s e^{-2s}\sim \frac{(2s)^s e^{-2s}}{s!},
\]
As $\E Y_{s+1}/\E(Y_s)\sim 2/e$ we have that $\E Y_{\om}=e^{-\Omega(\om)}$ and so w.h.p. there are no dependencies with $\om\leq s =o(n^{1/2})$.
\end{proof}

Define $\s_s,\;\k_s$ by
\beq{kappal}{
\s_s=\sum_{j=0}^{s-1} \frac{s^j}{j!},
\qquad \text{ and } \qquad
\k_s= \frac{(s-1)!}{s^s}\s_s.
}
For $S\subseteq [n]$, let $\cF^*(S)$ denote the event that the rows corresponding to $S$ form a fundamental dependency. The next lemma deals with small fundamental dependencies.
\begin{lemma}
$\Pr(\cF^*(S)\mid\cF(S))=\k_s$.
\end{lemma}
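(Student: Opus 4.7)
The plan is to condition on $\cF(S)$ and identify the random multigraph on $S$ that governs whether $S$ is fundamental. For each column $C_i$ with $i\in S$ the diagonal $1$ sits in row $i$, so the parity condition in $\cF(S)$ forces exactly one of the two random picks $r_{1,i},r_{2,i}$ to lie in $S$; by symmetry the unique pick $r_i\in S$ is then uniform on $S$, and the $r_i$ are mutually independent across $i\in S$. For each column $C_j$ with $j\notin S$, the condition forces both picks to lie in $S$ or both in $[n]\setminus S$, and the ``both in $S$'' case produces an extra pair $\{r_{1,j},r_{2,j}\}\subseteq S$. Define a multigraph $H$ on vertex set $S$ whose non-loop edges are the pairs $\{i,r_i\}$ (for $i\in S$ with $r_i\ne i$) together with every extra pair $\{r_{1,j},r_{2,j}\}$ (for $j\notin S$, when the picks are distinct and both in $S$).

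The key reduction is that a subset $T\subseteq S$ is a dependency iff every non-loop edge of $H$ has both endpoints in $T$ or both in $S\setminus T$, equivalently $T$ is a union of connected components of $H$. Hence $\cF^*(S)$ holds iff $H$ is connected. I would next verify that the edges contributed by columns $j\notin S$ are negligible: conditional on $\cF(S)$, their number is a sum of independent Bernoullis with total mean $(n-s)(s/n)^2/[(s/n)^2+((n-s)/n)^2]=O(s^2/n)=o(1)$ for $s=o(\sqrt n)$. So with probability $1-o(1)$ only the edges of type (a) are present, and $H$ coincides with the functional digraph of $f\colon[s]\to[s]$, $f(i)=r_i$, whose conditional distribution is uniform over all $s^s$ maps.

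It therefore remains to count connected functional digraphs on $[s]$. Any such graph contains a unique directed cycle of some length $k\in\{1,\ldots,s\}$, with the remaining $s-k$ vertices forming a rooted forest whose roots lie on that cycle. Standard enumeration gives $\binom{s}{k}(k-1)!$ choices for the directed cycle on a chosen $k$-subset and $k\,s^{s-k-1}$ rooted forests on $[s]$ with those $k$ vertices as roots (by the usual extension of Cayley's formula), so the count at cycle length $k$ is $s!\,s^{s-k-1}/(s-k)!$. Summing over $k$ and reindexing by $j=s-k$ collapses this to $(s-1)!\sum_{j=0}^{s-1}s^j/j!=(s-1)!\,\s_s$, and dividing by $s^s$ yields $\k_s$ as claimed. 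The main obstacle is the negligibility bound for the extra edges from outside $S$, which is routine in the regime $s=o(\sqrt n)$; the substantive ingredient is the classical cycle-plus-forest enumeration of connected mappings.
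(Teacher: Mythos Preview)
Your argument is correct and follows the same route as the paper: condition on $\cF(S)$, identify the random functional digraph $D_S$ on $S$ (via the unique in-$S$ pick in each column $i\in S$), and observe that $S$ is fundamental iff $D_S$ is connected, which has probability $\k_s$. You are in fact more careful than the paper on two points: the paper silently asserts that the $s\times(n-s)$ block is zero, whereas you explicitly bound away the extra edges coming from columns $j\notin S$ with both picks in $S$ (expected number $O(s^2/n)=o(1)$ for $s=o(\sqrt n)$), and you supply the cycle-plus-forest enumeration of connected mappings rather than citing a reference; correspondingly, both arguments actually establish $\Pr(\cF^*(S)\mid\cF(S))\sim\k_s$ rather than exact equality, which is all that is used downstream.
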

\begin{proof}
The rows of the dependency $S$ consist of an $s \times s$ sub-matrix $M_{S,S}$ and a zero $(s \times n-s)$ sub-matrix. For $i \in S$, if $M_{i,i}=1$, then w.h.p. there is a unique entry $M_{j,i}=1$ which gives rise to an {\em edge}  $(i,j)$. If $M_{i,i}=0$ we regard this as a loop $(i,i)$. Thus $M_{S,S}$ is the incidence matrix of a random functional digraph $D_S$, and $S$ is fundamental iff  the underlying graph of $D_S$ is connected. For $s \ge 1$, $\Pr(D_S\text{ is connected})=\k_s$ (see e.g., \cite{Bo} or \cite{FK}).
\end{proof}
We now prove
\begin{lemma}\label{smalldisjoint}
Small fundamental dependent sets of $M$ are pairwise disjoint, w.h.p.
\end{lemma}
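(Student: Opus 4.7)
The plan is a first moment argument. Let $W$ denote the number of ordered pairs of distinct small fundamental dependencies of $M$ sharing at least one common row index. We aim to show $\E W = o(1)$, which gives $W = 0$ w.h.p. Partition $W$ by the triple $(a, b, t)$, where $a = |S_1 \setminus S_2|$, $b = |S_2 \setminus S_1|$, $t = |S_1 \cap S_2| \ge 1$. Fundamentality forces $a, b \ge 1$: if, say, $S_1 \subsetneq S_2$, then $S_2\setminus S_1$ would be a dependency strictly inside $S_2$, contradicting minimality. The number of ordered triples of disjoint index sets with these sizes is $\sim n^{a+b+t}/(a!\,t!\,b!)$, and by assumption $a+t, b+t \le \omega$.

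The probability that both $S_1$ and $S_2$ are dependencies is computed column-by-column, analogously to the proof of $\Pr[\cF(S)] \sim (2s/n)^s e^{-2s}$: for each column $j$, enumerate the cases for its two random unit entries according to whether they lie in $A$, $B$, $T$, or $O := [n]\setminus(S_1\cup S_2)$, retaining only the outcomes with the required parity simultaneously for $S_1$ and for $S_2$. The dominant per-column contributions are $2a/n, 2b/n, 2t/n$, and $1 - 2(a+b+t)/n$ for $j \in A, B, T, O$ respectively, yielding
\[
\Pr\bigl[\cF(S_1)\cap\cF(S_2)\bigr] \;\sim\; \left(\frac{2a}{n}\right)^{a} \left(\frac{2b}{n}\right)^{b} \left(\frac{2t}{n}\right)^{t} e^{-2(a+b+t)}.
\]

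The crux is to bound the conditional probability of simultaneous fundamentality. Recall $S_i$ is fundamental iff the functional digraph $D_{S_i}$ is connected. Under the conditioning ``both are dependencies,'' for an $A$-column the dominant outcome places one random unit in $A$ and one in $O$, yielding an edge of $D_{S_1}$ within $A$; for a $T$-column the dominant outcome places one random unit in $T$ and one in $O$, giving edges of $D_{S_1}$ and $D_{S_2}$ both within $T$. Hence without at least one ``cross-edge'' linking $A$ to $T$ in $D_{S_1}$, that graph splits into disjoint components on $A$ and on $T$, precluding fundamentality. Cross-edges arise only from sub-dominant column outcomes (a $T$-column with entries in $A$ and $B$, or an $A$-column with entries in $T$ and $B$), each of conditional probability $O(1/n)$ per column. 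Crucially, a single $T$-column in the sub-dominant outcome delivers cross-edges in \emph{both} $D_{S_1}$ and $D_{S_2}$ simultaneously, so a short case split shows
\[
\Pr\bigl[\,S_1,S_2\text{ both fundamental}\,\big|\,\cF(S_1)\cap\cF(S_2)\bigr] \;=\; O\!\left(\frac{ab}{n}\right).
\]

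Combining the three estimates,
\[
\E W \;\lesssim\; \frac{1}{n}\sum_{a,b,t\ge 1} \frac{(2a)^a(2b)^b(2t)^t\,ab}{a!\,b!\,t!}\,e^{-2(a+b+t)},
\]
and the triple sum factorizes into three independent one-variable sums of the form $\sum_{k\ge 1} k^{c}\,(2k)^k e^{-2k}/k!$ with $c\in\{0,1\}$, each convergent by Stirling since $(2k)^k e^{-2k}/k!\sim (2/e)^k/\sqrt{2\pi k}$ decays geometrically. Thus $\E W = O(1/n) = o(1)$ and the lemma follows. The main obstacle is the conditional-probability bound in the previous display: the trivial bound $\Pr[\text{both fund}\mid\text{both dep}]\le 1$ leaves $\E W = O(1)$, which is useless; the improvement to $O(ab/n)$ requires identifying the cross-edge mechanism and carefully tracking the coupling between $D_{S_1}$ and $D_{S_2}$ through the shared columns in $T$.
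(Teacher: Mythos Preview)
Your argument is correct and rests on the same idea as the paper's proof: a first-moment bound in which the key observation is that joint \emph{fundamentality} of two overlapping small dependencies forces at least one ``extra'' coincidence (a column of $S_1\cup S_2$ with both random unit entries landing inside $S_1\cup S_2$), which costs an additional factor of order $1/n$ and drives $\E W$ to $o(1)$.

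The execution differs in granularity. The paper does not decompose by $(a,b,t)$ nor estimate $\Pr[\cF(S_1)\cap\cF(S_2)]$ precisely; it simply lets $k=|S_1\cup S_2|$, notes that connectivity of the two functional digraphs forces some column of $S_1\cup S_2$ to place both random entries inside $S_1\cup S_2$, and then applies the crude union bound
\[
\sum_{k=2}^{2\omega}\binom{n}{k}\,k\,\Bigl(\tfrac{k}{n}\Bigr)^{k-1}\Bigl(\tfrac{k}{n}\Bigr)^{2}=o(1).
\]
Your route recovers the same $1/n$ saving via the sharper estimate $\Pr[\text{both fundamental}\mid\text{both dependencies}]=O(ab/n)$, obtained by tracking the $T$-column mechanism; this is more informative but also more work. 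Either way the substance is the same: fundamentality supplies the extra $1/n$, and the remaining sum is summable because $(2k)^k e^{-2k}/k!$ decays geometrically.
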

\begin{proof}
Let  $S,T$ be two small fundamental zero-sum row sets  with a non-trivial intersection $C=S \cap T$ and differences $A=S \sm T$, $\,B=T \sm S$, where $A \cup B\neq \emptyset$. Suppose $A \ne \es$. As the functional digraphs $D_S, D_T$ are connected, one of the following events must occur.  Either (i) some column of $C$ has two non-zero entries in the rows of $S \cup T$; or (ii) some column $j$ of  $A$ has 
a non-zero entry in the rows of $C$. The latter is not possible as then a column of $S$
has a non-zero entry in the rows of $T$. Let $k=| S \cup T|$.
 The former has probability at most
\beq{zz1}{
\sum_{k=2}^{2\om}\binom{n}{k}k\bfrac{k}{n}^{k-1}\bfrac{k}{n}^2=o(1).
}
\end{proof}
Given this lemma we can now prove
\begin{lemma}\label{smallsmall}
The number $Z$ of small fundamental dependent sets among the  rows of $M$ is asymptotically Poisson distributed with  parameter $\f_R$, and thus
\begin{equation}\label{Smallprob}
\Pr(Z=d) \sim \frac{\f_R^d}{d!} e^{-\f_R}.
\end{equation}
\end{lemma}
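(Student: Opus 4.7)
The plan is to apply the method of factorial moments: since a Poisson$(\f_R)$ random variable has $k$th factorial moment $\f_R^k$, it suffices to show $\E(Z)_k \to \f_R^k$ for every fixed integer $k \geq 1$.

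For $k=1$, combining the previous two lemmas and using \eqref{kappal} gives
\[
\E Z \sim \sum_{s=1}^{\omega} \binom{n}{s} \bfrac{2s}{n}^s e^{-2s}\, \k_s \sim \sum_{s=1}^{\omega} \frac{(2s)^s}{s!}\, e^{-2s} \cdot \frac{(s-1)!}{s^s}\, \s_s = \sum_{s=1}^\omega \frac{(2e^{-2})^s}{s}\, \s_s \to \f_R
\]
by \eqref{phiNR}. For general $k$, I would write $\E(Z)_k$ as a sum over ordered $k$-tuples $(S_1,\ldots,S_k)$ of distinct small subsets of $[n]$ of the joint probability that each $S_i$ is a fundamental dependency, and split this into pairwise-disjoint tuples and the rest. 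A first-moment argument analogous to \eqref{zz1}, applied to each of the $\binom{k}{2}$ pairs, shows that non-disjoint tuples contribute $o(1)$ to $\E(Z)_k$.

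For pairwise-disjoint tuples the key step is to show the joint probability factorizes asymptotically. Setting $s_i = |S_i|$, $p_i = s_i/n$, $q = (n-t)/n$, and $t = \sum_i s_i$, a direct adaptation of \eqref{EXL} yields
\[
\Pr\brac{\bigcap_{i=1}^k \cF(S_i)} = \prod_{i=1}^k (2 p_i q)^{s_i}\,\brac{q^2 + \sum_{j=1}^k p_j^2}^{n-t},
\]
since for each column $c \in S_i$ one still needs exactly one of the two random entries in $S_i$ and the other outside $\bigcup_j S_j$, while for each column $c \notin \bigcup_j S_j$ both random entries must lie in the same region among $S_1,\ldots,S_k$ and $[n]\sm\bigcup_j S_j$. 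Because $t = O(k\om) = o(\sqrt n)$, the cross terms are of order $t^2/n = o(1)$ and the product simplifies to $\prod_i (2s_i/n)^{s_i} e^{-2s_i} \sim \prod_i \Pr(\cF(S_i))$. Moreover, conditional on $\bigcap_i \cF(S_i)$, the sub-matrices $M_{S_i,S_i}$ are independent uniform random functional digraphs on their row sets, so the conditional probability that every $S_i$ is fundamental equals $\prod_i \k_{s_i}$. Summing over disjoint tuples and over sizes then gives $\E(Z)_k \sim (\E Z)^k \to \f_R^k$, and \eqref{Smallprob} follows from the method of moments.

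The main obstacle is the asymptotic factorization just described: while intuitively the events $\cF^*(S_i)$ should be nearly independent for disjoint small $S_i$, the $n-t$ ``outside'' columns are governed by the single term $q^2 + \sum_j p_j^2$ that couples all the $S_j$, and one must check using $t^2 = o(n)$ that this coupling contributes only a $1+o(1)$ correction. Once that is in place, Lemma \ref{smalldisjoint} handles the reduction to disjoint tuples and the random functional digraph structure supplies the $\k_{s_i}$ factors with no further interaction between the $S_i$.
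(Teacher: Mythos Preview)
Your proposal is correct and follows essentially the same strategy as the paper: establish $\E(Z)_k\to\f_R^k$ for all fixed $k$ and invoke the method of moments. The only organisational difference is that the paper first conditions on $\cF(S)$ for the union $S=\bigcup_iS_i$ and then computes the conditional probability $\frac{s_1^{s_1}\cdots s_k^{s_k}}{s^s}\prod_i\k_{s_i}$ that the partition into the $S_i$ consists of fundamental pieces, whereas you compute the joint probability $\Pr\bigl(\bigcap_i\cF(S_i)\bigr)$ directly via your displayed formula and then multiply by $\prod_i\k_{s_i}$; unwinding either computation yields the same product $\prod_i(2e^{-2})^{s_i}\s_{s_i}/s_i$. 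Both the paper and your sketch lean on Lemma~\ref{smalldisjoint} to dismiss the contribution of non-disjoint $k$-tuples to $\E(Z)_k$ without spelling out the (routine) first-moment bound in full, so your treatment is at the same level of rigour as the original.
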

\begin{proof}
Fix $S\subseteq [n]$ and let $S_1,\ldots,S_d$ be a partition of $S$ with $|S_i|=s_i,i=1,2,\ldots,d$. Let $P(s_1,\ldots,s_d)$ be the probability that each $S_i,i=1,2,\ldots,d$ is a fundamental set, given that $S$ is a dependency. Thus,
\[
P(s_1,\ldots,s_d)= \frac{(s_1)^{s_1}\cdots(s_d)^{s_d}}{s^s}\prod_{i=1,...,d} \Pr(D_{S_i} \text{ connected})=\frac{1}{s^s} \prod_{i=1}^d (s_i-1)! \s_{s_i}.
\]
{\bf Explanation:} the factor $\frac{(s_1)^{s_1}\cdots(s_d)^{s_d}}{s^s}$ is the conditional probability that the random choices for columns with index in $S_i$ are in rows with index in $S_i$.

Thus, using \eqref{EYL}, we see that
\begin{flalign}\label{Zk1}
\E (Z)_d \sim &  \sum_{s \ge 1} \frac{ (2s)^s}{s!} e^{-2s} \sum_{s_1+\cdots+s_d=s}{ s \choose s_1,\ldots,s_d}P(s_1,\ldots,s_d)\\
= & \sum_{s \ge 1} \sum_{s_1+\ldots+s_d=s} \; \prod_{i=1}^d (2e^{-2})^{s_i} \frac{1}{s_i} \s_{s_i} \nonumber\\
=&\brac{\sum_{s \ge 1} \frac 1s (2e^{-2})^s \s_s}^d\nonumber\\
=&\f_R^d.\label{Zk2}
\end{flalign}
Thus, by the method of moments, the number of small disjoint fundamental zero-sum sets $Z$ tends tend to a Poisson distribution with parameter $\f_R$.
\end{proof}

\section{Large zero-sum sets: First moment calculations}\label{LargeFM}
Define an index set $J_a$ as follows,
\begin{equation}\label{what-the-L}
J_a=\{  n/2 - \sqrt{a n \log n} \le \ell \le n/2 + \sqrt{a n \log n}\}\text{ and }\overline{J}_a=[n]\setminus J_a, \,a\ge 0.
\end{equation}
\begin{lemma}\label{Th1}{(\bf Large linearly dependent sets.)}
Let $X_\ell$ denote the number of $\ell$-dependencies among the rows of $M$.
\begin{enumerate}[(i)]
\item $\sum_{\ell \in J_1} \E X_\ell \sim 1$.
\item Let $F=[n]\setminus  ([\om]\cup J_1)$, where $\om \rai$ arbitrarily slowly with $n$. Then  $\sum_{\ell \in F} \E X_\ell=o(1)$.
\end{enumerate}
\end{lemma}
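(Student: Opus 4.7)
The plan is to compute $\E X_\ell = \binom{n}{\ell}\Pr(\cF(S))$ for $|S|=\ell$ and analyze the sum over $\ell$ via Stirling and Taylor expansion. Proceeding column by column exactly as in~\eqref{EXL}, each column $i$ is generated independently, and the column sum in the rows of $S$ is zero mod $2$ with probability $p:=2\ell(n-\ell)/n^2$ if $i\in S$ (exactly one of the two random picks must land in $S\setminus\{i\}$, up to the negligible ``$a=b$'' coincidence) and with probability $1-p=(\ell^2+(n-\ell)^2)/n^2$ if $i\notin S$. Hence
\[
\Pr(\cF(S))=p^\ell(1-p)^{n-\ell}(1+o(1)),\qquad \E X_\ell=\binom{n}{\ell}\,p^\ell(1-p)^{n-\ell}(1+o(1)).
\]

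For part~(i), I substitute $\ell=n/2+x$ with $|x|\le\sqrt{n\log n}$, giving $p=\tfrac12(1-4x^2/n^2)$ and $1-p=\tfrac12(1+4x^2/n^2)$. Expanding the logarithm,
\[
p^\ell(1-p)^{n-\ell}=2^{-n}\exp\!\left(-\tfrac{8x^3}{n^2}-\tfrac{8x^4}{n^3}+O(\tfrac{x^7}{n^6})\right)\sim 2^{-n},
\]
uniformly on $J_1$ since $|x|^3/n^2\le(\log n)^{3/2}/\sqrt n=o(1)$. Combined with the local CLT estimate $\binom{n}{n/2+x}\sim 2^n(\pi n/2)^{-1/2}\,e^{-2x^2/n}$, this gives $\E X_\ell\sim(\pi n/2)^{-1/2}e^{-2x^2/n}$. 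The sum over $\ell\in J_1$ is a Riemann approximation of $\int_{-\infty}^{\infty}(\pi n/2)^{-1/2}e^{-2x^2/n}\,dx=1$, with the tails outside $J_1$ contributing $o(1)$.

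For part~(ii), I partition $F$ into three regimes. In the small regime $\omega\le\ell\le n/\log n$, the estimates $(1-p)^{n-\ell}\le e^{-p(n-\ell)}\le e^{-2\ell(1+o(1))}$, $p^\ell\le(2\ell/n)^\ell$, and $\binom{n}{\ell}\le(en/\ell)^\ell$ together give $\E X_\ell\le(2/e)^\ell e^{o(\ell)}$, so $\sum_{\ell\ge\omega}\E X_\ell=O((2/e)^\omega)=o(1)$. In the bulk regime, where $\ell=cn$ with $c$ in a compact subset of $(0,1)\setminus[\tfrac12-\eta,\tfrac12+\eta]$, the exponential rate is
\[
g(c)\;:=\;c\log 2+(2c-1)\log(1-c)+(1-c)\log(c^2+(1-c)^2),
\]
and direct analysis yields $g(1/2)=0$, $g'(1/2)=0$, $g''(1/2)=-4$, with $g(c)<0$ elsewhere on $(0,1)$; hence $g(c)\le-\rho(\eta)<0$ uniformly and $\E X_\ell\le e^{-\rho n+O(\log n)}$ is exponentially small. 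Finally, in the intermediate window $\sqrt{n\log n}<|x|\le\eta n$ with $\ell=n/2+x$, the Taylor expansion $g(c)=-2(c-1/2)^2(1+O(|c-1/2|))$ gives $\E X_\ell\le C n^{-1/2}e^{-x^2/n}$ (taking $\eta$ small enough that the $(1+o(1))$ factor exceeds $1/2$), and the Gaussian-type tail $\sum_{|x|\ge\sqrt{n\log n}} n^{-1/2}e^{-x^2/n}$ is $o(1)$.

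The main obstacle will be the intermediate regime: the cubic remainder in the Taylor expansion of $g$ must be controlled uniformly over $|c-1/2|\le\eta$, not merely pointwise, so that the sub-Gaussian upper bound survives until the uniform-negativity estimate of the bulk regime takes over. Everything else is a routine Stirling/Taylor and Gaussian-tail calculation.
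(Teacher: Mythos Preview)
Your argument is correct and follows the paper's route: the same closed formula $\E X_\ell=\binom{n}{\ell}p^\ell(1-p)^{n-\ell}$, Taylor expansion at $\ell=n/2$ for part~(i), and a small/bulk/near-$\tfrac12$ partition of $F$ for part~(ii). The one imprecision is that your three regimes as written do not quite tile $F$ --- the ``compact subset of $(0,1)$'' in the bulk regime cannot reach down to $c=1/\log n$ on the left or up to $c=1$ on the right --- whereas the paper takes explicit numerical cutoffs $3n/10$ and $7n/10$ and checks each piece directly (and also verifies, rather than asserts, that the rate function stays negative away from $c=1/2$); your own small-$\ell$ bound in fact extends unchanged to any $\ell\le\delta_0 n$ for fixed $\delta_0$ small enough (since $2e\cdot e^{-2(1-\delta_0)^2}<1$), and $g(c)\to-\infty$ as $c\to1^-$, so the gap closes trivially.
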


\begin{proof}


From \eqref{EXL}, the expected number of dependencies of size $\ell$ is
\begin{flalign}\nonumber
\E X_\ell = & {n \choose \ell} \brac{2 \bfrac{\ell}{n} \bfrac{n-\ell}{n}}^\ell \brac{\bfrac{\ell}{n}^2 + \bfrac{n-\ell}{n}^2}^{n-\ell}.
\end{flalign}
We next approximate the expression for $\E X_\ell$. We note the following expansion.
\begin{equation}\label{logx}
(1+x)\log(1-x^2)+(1-x)\log(1+x^2)=-2 \brac{x^3+\frac{x^4}{2}+ \frac{x^7}{3}+\sum_{k\ge 4}\ind{k \text{ even}} \frac{x^{2k}}{k}\brac{ 1+\frac{k x^3}{k+1}}}.
\end{equation}
We write $\E X_\ell= {n \choose \ell} \F_\ell^n,\,\ell=(n/2)(1+\e)$, where
\begin{flalign}
\F_\ell =& \brac{\frac{1-\e^2}{2}}^{\frac{(1+\e)}{2}}
\brac{\bfrac{1+\e}{2}^2+\bfrac{1-\e}{2}^2}^{\frac{(1-\e)}{2}}\nonumber\\
=& \frac 12 (1-\e^2)^{\frac{(1+\e)}{2}}(1+\e^2)^{\frac{(1-\e)}{2}}\nonumber\\
=&\frac 12 \exp\left\{\frac 12\brac{(1+\e)\log(1-\e^2)+  (1-\e)\log (1+\e^2)}\right\}\nonumber\\
=&\frac 12 \exp\set{ -
\brac{\e^3+\frac{\e^4}{2}+ \frac{\e^7}{3}+\sum_{k\ge 4}\ind{k \text{ even}} {\e^{2k}}\brac{\frac 1{k}+\frac{\e^3}{k+1}}}} \nonumber\\
=&\frac 12 \exp\set{ -\brac{\e^3+\frac{\e^4}{2}+ \e_7}},
\label{good1}
\end{flalign}
where $|\e_7|\leq 2|\e|^7/3$ for sufficiently small $\e$.

Also for $\ell=(n/2)(1+\e)$,  $|\e|<1$,
\beq{Binomial}{
{n \choose \ell}= \brac{1+O\bfrac{1}{n}}\;\frac{2^n}{\sqrt{2\pi n(1-\e^2)}}\;\exp\brac{-n \brac{\frac{\e^2}{2}+ \frac{\e^4}{12}+\e_6}},
}
where $|\e_6|\leq |\e|^6/10$.

\paragraph{ Case 1: $\ell \in J_1$ .}

From \eqref{Binomial} with $|\e|=2\sqrt{(\log n)/n}$ we have
 \[
\frac{1}{2^n}\sum_{\ell \notin J_1} {n \choose \ell}=O(1/n^{5/2}),
\]
so that
\[
\frac{1}{2^n}\sum_{\ell \in J_1} {n \choose \ell}=1-O(1/n^{5/2}).
\]
Using \eqref{good1}, for $\ell \in J_1$,
 ${\F_\ell}^n=e^{\Th(n \e^3)}/2^n$. Then, as $n\e^3=O(\log^{3/2}n /\sqrt{n})$,
\[
\sum_{\ell \in J_1} \E X_\ell =\sum_{\ell \in J_1}{n \choose \ell} \frac{1}{2^n}
e^{\Th(n \e^3)}=1+o(1).
\]
For future reference, we note that for $|\e|<c<1$,
\begin{flalign}
\E X_\ell= & {n \choose \ell} \frac{1}{2^n}  \exp \set{-n\brac{\e^3+\frac{\e^4}{2}+\e_7}} \nonumber\\
=&\frac{\ooi}{\sqrt{2\pi n(1-\e^2)}}\exp\set{-n\brac{\frac{\e^2}{2}+\e^3+ \frac{\e^4}{2}+ \frac{\e^4}{12} +\e_6+\e_7}}\nonumber\\
=&\frac{\ooi}{\sqrt{2\pi n(1-\e^2)}}\exp\set{-\frac{n \e^2}{2}\brac{(1+\e)^2 +\frac{\e^2}{6}+O(\e^4)}}. \label{OK1}
\end{flalign}


\paragraph{Case 2: $\ell \in F$.}
Write $F=[n]\setminus  ([\om]\cup J_1)$ as  $F=F_1 \cup F_2\cup F_3$ where $F_1=\{\om,\ldots,3n/10\}$, $F_2=\{7n/10,\ldots,n\}$ and $ F_3=F\setminus (F_1\cup F_2)$. Thus, for $\ell \in F_3$,  $\ell=(n/2)(1+\e)$ where  $-2/5\leq\e\leq -\sqrt{(2 \log n)/n}$ or $\sqrt{(2 \log n)/n}\le \e \le 2/5$.

{\em Case $\ell \in F_1$.}
For sufficiently large $n$, Stirling's approximation implies that
\[
{n \choose \ell } \le \frac{n^n}{\ell^\ell(n-\ell)^{n-\ell}},
\]
so for some constant $C$ (in both with and without replacement models)
\beq{Xell}{
\E X_\ell \le\frac{ C n^n}{\ell^\ell(n-\ell)^{n-\ell}} \brac{2 \bfrac{\ell}{n} \bfrac{n-\ell}{n}}^\ell \brac{\bfrac{\ell}{n}^2 + \bfrac{n-\ell}{n}^2}^{n-\ell}.
 }
Continuing with this expression, using $\ell =\l n$ for $\l <1/2$,
\begin{flalign*}
\E X_\ell \le&  C \brac{ \frac{2^\l}{\l^\l (1-\l)^{1-\l}} \l^\l(1-\l)^{\l} (\l^2+(1-\l)^2)^{1-\l}}^n\\
=&  C \brac{2^\l(1-\l)^{\l}\brac{1-\l+\frac{\l^2}{1-\l}}^{1-\l}}^n\\
\le&  C \brac{2^\l(1-\l)^{\l} e^{-\l(1-\l)+\l^2}}^n\\
=& C \brac{2(1-\l)e^{-1+2\l}}^{\l n}\\
=& C [g(\l)]^{\l n}.
\end{flalign*}
The function $g(\l)$ is strictly concave and has a unique maximum at $\l=1/2$ with $g(1/2)=1$.
For $\l\le 3/10$, $g(\l)\le g(3/10)=(7/5)e^{-2/5}<1$ so that
\[
\sum_{\ell\in F_1}\E X_\ell \le C\sum_{\ell\in F_1}g(3/10)^{\ell} =o(1).
\]
{\em Case $\ell \in F_2$.}
Referring to \eqref{OK1},  the function $h(\e)=(\e^2/2)((1+\e)^2+\e^2 /6 +\e_6+\e_7)$ satisfies $h(\e)>2/25$ for $\e\geq 2/5$, and so
\[
\sum_{\ell \in F_2} \E X_\ell \le \sum_{\ell \in F_2} e^{-\Omega(n)}=o(1).
\]

{\em Case $\ell \in F_3$.}
For $\sqrt{ (2 \log n)/n} \le |\e| \le \sqrt{ (25 \log n)/n}$, the function $h(\e) \ge (1-o(1)) (\log n)/n$. Let $F_{3a}$ be the values of $\ell$ in this range
\[
\sum_{\ell  \in F_{3a}}\E X_\ell =O(\sqrt{n \log n})/n^{1-o(1)})=o(1/n^{1/3}).
\]
Let $F_{3b}=F_3 \sm F_{3a}$. Then $\e^2/2 \ge (25/2) (\log n)/n$, and $(1+\e)^2+\e^2 /6 +\e_6+\e_7 > 9/25$. Referring to \eqref{OK1},
\[
\sum_{\ell  \in F_{3b}}\E X_\ell =O(n)/n^4=o(1/n^{3}).
\]
\end{proof}

\section{Higher moments of large zero-sum sets: Background}\label{back}
Let $A \sd B$ denote the symmetric set difference of the sets $A$ and $B$. Thus
$A \sd B=(A \cup B)\sm (A \cap B)=(A \sm B) \cup (B \sm A)$.
 Suppose that, over $GF(2)$, the rows $M[i],i\in A$ indexed by $A$ are zero-sum,  thus $\ul z_A=\sum_{i \in A} M[i]=\ul 0$. Let $B$ be another  set such that $\ul z_{B}= \ul 0$. We can write $\ul z_A=\ul z_{A\ssm B}+\ul z_{A \cap B}$ and $\ul z_{B}=\ul z_{B\ssm A}+\ul z_{A \cap B}$. Adding these two sets of rows modulo 2 has the effect of canceling the intersection $A \cap B$. Thus (i) $\ul z_A+\ul z_{B}=0$, whether $\ul z_{A \cap B}$ is itself zero-sum or not; and (ii) $\ul z_A+\ul z_{B}=\ul z_{A \oplus B}$.

Recall that a set of zero-sum rows is fundamental if it contains no smaller zero-sum set of rows. For small sets we were able to count fundamental dependencies directly. We have to adopt an alternative strategy for  large zero-sum sets. We use an approach similar to the one given in \cite{CCGF2}. We count {\it simple} sequences of large linearly dependent row sets $B=(B_1,...,B_k)$,  $k\ge 1$ constant, and where $|B_i| \in J_1$ so that $|B_i|\sim n/2$. A $k$-tuple of large dependent sets $ B=(B_1,...,B_k)$ is simple, if for all sequences $(j_1<j_2<...<j_l)$ and $ (1\le l \le k)$ the set differences satisfy
\beq{xx1}{
 |B_{j_1}\sd  B_{j_2}\sd\cdots \sd  B_{j_l}|  \in J_1
}
For any given matrix $M$ there is a largest $k$ such that $B_1,...,B_k$ are simple. In which case, we say $k$ is {\it maximal} and  $B_1,...,B_k$ is a {\it maximal simple sequence}.

Let $V(M)=\{\emptyset\}\cup\{B:B \mbox{ is zero-sum in } M\}$, then  $(V(M),\oplus)$ is a vector space over $ {GF}_2$ under the convention that $0 \cdot B=\emptyset,\; 1\cdot B=B$. In $V(M)$ a simple sequence $(B_1,...,B_k)$ is an ordered basis  for a subspace $S$ of dimension $k$.

Given $k$ linearly dependent sets of rows with index sets $B_1,\cdots,B_k$, there are $2^k$ intersections of these sets and their complements. For each $\ux=(x_1,\cdots,x_k)$, $\ux \in \{0,1\}^k$ we let $I_\ux=\cap_{i=1,...,k}B_i^{(x_i)}$ where $B_i^{(0)}=\ol B_i=[n] \setminus B_i$ and $B_i^{(1)}=B_i$. The index sets $I_\ux$ are disjoint by definition and their union (including $\ux_0=(0,\cdots,0)$) is $[n]$.

Next for $\ux \in \{0,1\}^k$ let $B(\ux)={\bigoplus}_{i:x_i=1}B_i$. Let $K=2^k-1$. Let $U$ be a $K\times K$ matrix  indexed by $\ux,\uy \in \{0,1\}^k$, $\ux,\uy \ne 0$; with entries $U(\ux,\uy)=1$ if $I_\uy\subseteq B(\ux)$, and $U(\ux,\uy)=0$ otherwise. In summary,
\begin{align*} 
&\text{Row index } \ux=(x_1,x_2,\ldots,x_k)\text{ is the indicator vector for } \displaystyle{B(\ux)={\bigoplus}_{i:x_i=1}}B_i, 
\\
&\text{Column index }\uy=(y_1,y_2,\ldots,y_k) \text{ is the indicator vector for   }I_\uy=\bigcap_{i=1,...,k}B_i^{(y_i)}.
\end{align*}
The row of $U$ representing the set $B(\ux)$  is formed by adding the rows of those sets $B_i$ such that $x_i=1$ in $\ux$; the addition being over $GF(2)$.
Thus $B(\ux)$ is the union of the sets $I_\uy$, where $y_i=1$ for an odd number of those  sets $B_i$ where $x_i=1$. This can be seen inductively by generating $B_1$, $B_1 \sd B_2$, $(B_1 \sd B_2)\sd B_3$ etc. in the given order. In summary $U(\ux,\uy)=1$ iff both $x_i=1$ and $y_i=1$  for an odd number of indices $i$, and thus, over $GF(2)$,
\beq{xiyi}{
U(\ux,\uy)=\sum_{i=1}^k x_i y_i.
}
Our aim  is to use $U$, treated as a real matrix to show that w.h.p. $|I_\ux|\sim n/2^k$ for every $\ux$.
We do this by observing that given the characterisation $U(\ux,\uy)=1_{I_{\uy}\subseteq B(\ux)}$, the vector $(|I_\ux|,\,\ux\in\set{0,1}^k,\,\ux\neq 0)$ is the solution ${\ul z}$ over the reals of an equation
\beq{Uz}{
U{\ul z}={\ul b}\text{ where }{\ul b}\sim \frac{n}2{\bf 1},
}
assuming that $ B=(B_1,...,B_k)$ is simple. To prove that $|I_\ux|\sim n/2^k$, we  prove the properties of $U$ listed in Lemma \ref{L77} below.

Equation \eqref{xiyi} implies that by arranging the rows and column indices of $U$ in the same order, $U$ will be symmetric. We will choose an ordering such the first $k$ rows  correspond to $B_i, i=1,...,k$. Thus $x_i=e_i,i=1,2,\ldots,k$ where $e_1=(1,0,\ldots,0)$ etc., and $y_i=e_i,i=1,2,\ldots,k$.
After this we let $Q$ be the $k \times K$ matrix with column indices $x$ made up of the first $k$ rows. Thus row $i$ represents $B_i, i=1,...,k$ and $U$ contains a $k\times k$ identity matrix in the first $k$ rows and columns.

The row indexed by $\ux=(x_1,...,x_k)$ is the linear combination $\sum_{i=1}^k x_i \ul r_i$ of the rows of $Q$, and  corresponds to $B(\ux)$ in the vector space $V(M)$ given above.

\begin{lemma}\label{L77}
The $K \times K$ matrix $U$ has the following properties:
\begin{enumerate}[(i)]
\item The matrix $U$ symmetric.
\item Every row or column of $ U$ has $2^{k-1}$ non-zero entries.
\item Any two distinct rows of  $ U$ have $2^{k-2}$ common non-zero entries.
\item The matrix $U$ is non-singular when the entries are taken to be over the real numbers, and the matrix $S=UU^{\top}=U^2=2^{k-2}(I+J)$ is symmetric, with inverse $S^{-1}=(1/2^{k-2})(I-J/2^k)$; where $J$ is the all-ones matrix.
\end{enumerate}
\end{lemma}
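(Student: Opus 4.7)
The plan is to exploit the compact identity $U(\ux,\uy)=\sum_{i=1}^k x_iy_i\pmod 2$ from \eqref{xiyi}, which reduces all four claims to counting on $GF(2)^k$. Part (i) is immediate: the bilinear form $\ux\cdot\uy$ over $GF(2)$ is symmetric, so $U(\ux,\uy)=U(\uy,\ux)$.

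For (ii), fix any $\ux\neq\bo$ and view $\uy\mapsto\ux\cdot\uy$ as a nonzero $GF(2)$-linear functional; being nonzero it is surjective onto $GF(2)$, so exactly $2^{k-1}$ of the $2^k$ vectors $\uy\in GF(2)^k$ satisfy $\ux\cdot\uy=1$. None of those is $\bo$, so the row still has weight $2^{k-1}$ after deleting the $\uy=\bo$ column, and symmetry gives the same for columns. For (iii), distinct nonzero $\ux,\ux'$ give $GF(2)$-linearly independent functionals (otherwise $\ux=\ux'$), hence the joint map $\uy\mapsto(\ux\cdot\uy,\ux'\cdot\uy)\in GF(2)^2$ is surjective; the value $(1,1)$ is attained by exactly $2^{k-2}$ vectors, none of them $\bo$.

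For (iv), compute $S=UU^\top=U^2$ entrywise over the reals. Because $U$ is $0/1$-valued, $S_{\ux,\ux}=\sum_{\uy}U(\ux,\uy)^2$ equals the weight of row $\ux$, namely $2^{k-1}$ by (ii); and for $\ux\neq\ux'$, $S_{\ux,\ux'}$ counts $\uy$'s with $U(\ux,\uy)=U(\ux',\uy)=1$, giving $2^{k-2}$ by (iii). Hence $S=2^{k-2}(I+J)$, a $K\times K$ matrix with $K=2^k-1$. Using $J^2=KJ$ one checks directly that $(I+J)(I-J/2^k)=I+J-J/2^k-KJ/2^k=I+J\bigl(1-(1+K)/2^k\bigr)=I$, so $S^{-1}=2^{-(k-2)}(I-J/2^k)$. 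Since $\det(I+J)=1+K=2^k\neq 0$, $\det S=(2^{k-2})^K\cdot 2^k\neq 0$, hence $S$ is nonsingular and therefore so is $U$.

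There is no serious obstacle here: the whole lemma is linear algebra that collapses once the identity \eqref{xiyi} is in hand. The only point needing a moment's care is that the row and column indices are restricted to $\ux,\uy\neq\bo$, but this is harmless because $\uy=\bo$ never contributes to the weight of a nonzero row nor to any inner product counted in (iii).
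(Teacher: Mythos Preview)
Your proof is correct and follows essentially the same approach as the paper: both derive everything from the identity \eqref{xiyi} and then compute $S=UU^\top$ from the row weights and pairwise overlaps. The only notable difference is in (iii): the paper observes that the row of $U$ indexed by $\ux+\ux'$ is precisely the symmetric difference of the rows indexed by $\ux$ and $\ux'$, applies (ii) to $\ux+\ux'$ to get $|\text{row}(\ux)\oplus\text{row}(\ux')|=2^{k-1}$, and then uses $|\text{row}(\ux)|+|\text{row}(\ux')|-|\text{symmetric difference}|=2|\text{intersection}|$; you instead argue that the two functionals $\uy\mapsto\ux\cdot\uy$ and $\uy\mapsto\ux'\cdot\uy$ are $GF(2)$-independent, so the joint map to $GF(2)^2$ is surjective with equal fibers. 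Both are one-line arguments; yours is perhaps more transparently robust (it generalizes immediately to more than two rows), while the paper's makes explicit the algebraic reason the overlap is exactly half the row weight.
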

\begin{proof}
(i) This follows immediately from \eqref{xiyi}, and the above construction.

(ii) Fix $\ux$ and assume that $x_1=1$. There are $2^{k-1}$ choices for the values of $y_i,i=2,3,\ldots,k$. Having made such a choice, there are two choices for $y_1$, exactly one of which will give $\sum_{i=1}^kx_iy_i=1$.

(iii) Fix $\ux,\ux'$ and think of rows $\ux,\ux',\ux+\ux'$ as non-empty subsets of $[2^k]$. Then we have $|\ux|=|\ux'|=|\ux\setminus \ux'|+|\ux'\setminus \ux|=2^{k-1}$, by (iii). Thus $|\ux|+|\ux'|-(|\ux\setminus \ux|+|\ux'\setminus \ux|)=2|\ux\cap \ux'|=2^{k-1}$.

(iv) That the matrix $ U$ is non-singular over the real numbers, uses an argument given in \cite{BR} (pages 11-13). Let $S=UU^{\top}$. Let $\ul u, \ul v$ be distinct rows of $ U$, then $\ul u \cdot \ul u=2^{k-1}$ and $\ul u \cdot \ul v=2^{k-2}$. Thus $S=2^{k-2} (I+J)$, where $J$ is the all-ones matrix. The reader can check that $S^{-1}=\frac{1}{2^{k-2}}(I-\frac{1}{2^{k}}J)\;2^{k-1}$ which implies that $U$ is invertible too.
\end{proof}
The definition of a simple $k$-tuple $(B_1,...,B_k)$ requires that all sets $B_i$ be large and their set differences to be distinct and of size $\sim n/2$. Let $(|B_1|,\ldots,|B_k|) \sim (n/2)\ul 1$ be the vector of these set sizes. Over the reals, solving \eqref{Uz} gives the sizes of the subsets $I_\ux$.

\begin{lemma}\label{Cor4}
Let $(B_1,...,B_k)$ be a simple sequence.   Then for all $\ux \in \{0,1\}^k$,
\beq{Ixin}{
|I_\ux|= \frac{n}{2^k}\brac{1\pm { 4^k}\sqrt{\frac{\log n}{n}}}.
}
\end{lemma}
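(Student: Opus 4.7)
The plan is to translate the set-counting problem into the real-linear equation \eqref{Uz} and then exploit the closed-form inverse of $U$ implied by Lemma \ref{L77}(iv). First I would verify the equation itself: the partition $\{I_\uy:\uy\in\{0,1\}^k\}$ of $[n]$ makes it possible to write each $|B(\ux)|$ as a linear combination of the $|I_\uy|$'s. Indeed, a point $v\in I_\uy$ lies in $B_i$ precisely when $y_i=1$, so it lies in $B(\ux)=\bigoplus_{i:x_i=1}B_i$ iff $\sum_i x_iy_i$ is odd, i.e.\ iff $U(\ux,\uy)=1$. Treating $U$ as a real matrix,
\[
|B(\ux)|=\sum_{\uy\neq \bo}U(\ux,\uy)\,|I_\uy|,\qquad \ux\neq\bo,
\]
which is exactly \eqref{Uz} after identifying $\ul z$ with $(|I_\uy|)_{\uy\neq\bo}$ and $\ul b$ with $(|B(\ux)|)_{\ux\neq\bo}$; the component $|I_{\bo}|$ drops out because $U(\ux,\bo)=0$.

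Next, since the sequence is simple, \eqref{xx1} gives $\ul b=\tfrac{n}{2}\,\ul 1+\ul\delta$ with $\|\ul\delta\|_\infty\le \sqrt{n\log n}$. The uniform vector $\ul z_0=(n/2^k)\,\ul 1$ satisfies $U\ul z_0=(n/2)\ul 1$ because each row of $U$ has exactly $2^{k-1}$ ones (Lemma \ref{L77}(ii)). So the job reduces to bounding the perturbation $U^{-1}\ul\delta$.

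For this I would compute $U^{-1}$ explicitly. From $U^2=S=2^{k-2}(I+J)$ with $S^{-1}=\tfrac{1}{2^{k-2}}\bigl(I-\tfrac{1}{2^k}J\bigr)$, and the identity $UJ=2^{k-1}J$ (every row of $U$ sums to $2^{k-1}$), I get
\[
U^{-1}=U\cdot S^{-1}=\frac{1}{2^{k-2}}\,U-\frac{1}{2^{k-1}}\,J,
\]
so every entry of $U^{-1}$ has absolute value exactly $1/2^{k-1}$. Hence the induced $\ell_\infty$-operator norm satisfies $\|U^{-1}\|_\infty=(2^k-1)/2^{k-1}<2$, whence
\[
\|U^{-1}\ul\delta\|_\infty<2\sqrt{n\log n}.
\]
Combined with $\ul z_0$ this yields $|I_\uy|=n/2^k\pm 2\sqrt{n\log n}$ for every $\uy\neq\bo$, and the remaining value $|I_{\bo}|=n-\sum_{\uy\neq\bo}|I_\uy|$ then satisfies the same bound up to a factor of $2^k-1$. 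In either case the error is bounded by $2^k\sqrt{n\log n}=\tfrac{n}{2^k}\cdot 4^k\sqrt{\log n/n}$, establishing \eqref{Ixin}.

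The main obstacle is really conceptual rather than computational: recognising that Lemma \ref{L77}(iv) supplies an $L^\infty$-stable inverse whose uniformly small entries absorb the $\sqrt{n\log n}$ fluctuations in $\ul b$ without any exponential blow-up in $k$ beyond the harmless $2^k$ from the coordinate count. Once that is in place the rest is bookkeeping.
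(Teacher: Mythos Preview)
Your argument is essentially the paper's: both solve the real system $U\ul z=\ul b$ using the explicit inverse supplied by Lemma~\ref{L77}(iv), the only cosmetic difference being that you compute $U^{-1}=\frac{1}{2^{k-2}}U-\frac{1}{2^{k-1}}J$ directly and bound its $\ell_\infty$-norm, whereas the paper first multiplies by $U$ and then applies $S^{-1}$. Your route is in fact a bit cleaner and gives the sharper constant $\|U^{-1}\|_\infty<2$.

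One small arithmetic slip: for $I_{\bo}$ you invoke the triangle inequality to get an error $\le (2^k-1)\cdot 2\sqrt{n\log n}$, and then assert this is $\le 2^k\sqrt{n\log n}$; that fails for $k\ge 2$. The fix is immediate with your own formula: since $\ul 1^{\top}U^{-1}=\frac{1}{2^{k-1}}\ul 1^{\top}$, one has $\bigl|\,|I_{\bo}|-n/2^k\bigr|=\bigl|\ul 1^{\top}U^{-1}\ul\delta\bigr|\le \frac{2^k-1}{2^{k-1}}\sqrt{n\log n}<2\sqrt{n\log n}$ as well, so the stated bound holds for all $\ux$.
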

\begin{proof}
{  Let $i=1,...,K$ index the rows of $U$, and $j =1,...,K$ index the  columns. Let $B(i)$ be the set corresponding to the row $i$ of $U$. Referring to \eqref{Uz}, let $\uy=(2/n)\uz$, and  $U \uy=\ul b$ where  now $b_i=2|B(i)|/n=1+\e_i$, so that $|\e_i|\leq 2\sqrt{\log n/n}$. The matrix  $S=U^2$, so  $S \uy=U \ul b=\ul c$ where $c_i=2^{k-1}(1+\d_i)$ and $\d_i= \sum_{j:U(i,j)=1} \e_j/2^{k-1}$, the summation being over the $2^{k-1}$-subset of non-zero entries of row $i$ of $U$.} Thus, as $J$ is $K \times K$ where $K=2^{k}-1$,
\[
\uy=S^{-1} \ul c= \frac{1}{2^{k-2}}\brac{I-\frac{1}{2^{k}}J}\;2^{k-1}(\ul 1 +\ul \d)= \frac{1}{2^{k-1}}\ul 1 +\ul \eta,
\]
where $|\ul \eta|\leq 2^{k}\sqrt{\log n/n}$. It follows that w.h.p. the solution { $\uz$ to \eqref{Uz} over the real numbers} satisfies $|I_\ux|=(n/2^k)(1\pm { 4^k} \sqrt{\log n/n})$ for all $\ux\in \set{0,1}^k$.
\end{proof}

\begin{remark}\label{remM}
The proofs above
generalize to the case where $\ul b\sim (\xi n,\xi n,\ldots,\xi n)$ for some constant $\xi \in (0,1/2]$ in equation \eqref{Uz}. In which case
\eqref {Ixin} becomes
\[
|I_\ux|=  \frac{2\xi n}{2^k}\brac{1\pm 2^k\sqrt{\frac{\log n}{n}}}.
\]
\end{remark}

\section{Simple sequences of large zero-sum sets.}\label{HM}

Let $B_1,B_2,\dots,B_k$ be a simple sequence. In row $M_i$ of the matrix $M$, there is a 1 in the diagonal entry $M_{i,i}$. As $s=3$  there need to be two (random) 1's in column $C_i$ chosen in a way to ensure the linear dependence of $B_1,\ldots,B_k$. The following lemma describes where these non-zeros must be placed.
\begin{lemma}\label{condist}
$B_1,\cdots,B_k$ are dependencies if and only if the following holds for all $i\in [n]$. Suppose that row $i$ is in $ I_\ux$, and that the two random non-zeros $e_1(i),e_2(i)$ in column $i$ are in $I_\uu,I_\uv$ respectively. Then we must have $\ux=\uu+\uv(\mod 2)$.
\end{lemma}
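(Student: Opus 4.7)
The lemma is essentially a bookkeeping exercise: it translates the defining condition of a $GF(2)$-dependency, column by column, into the language of the partition $\{I_\ux\}$. The plan is to check one column at a time and then conjoin over all columns.

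\textbf{Step 1 (Unpack ``dependency'' column-by-column).} Recall that $B_l$ is a dependency over $GF(2)$ iff $\sum_{j\in B_l} M_j = \ul 0$, which is equivalent to: for every column index $i\in[n]$, the number of non-zero entries of column $C_i$ that lie in rows indexed by $B_l$ is even. In the $r=1,s=3$ model, column $C_i$ has non-zero entries in exactly three rows, namely the diagonal row $i$ and the two random rows $e_1(i),e_2(i)$. Hence the column-$i$ condition for $B_l$ reads
\[
\indi{i\in B_l}+\indi{e_1(i)\in B_l}+\indi{e_2(i)\in B_l}\equiv 0\pmod 2.
\]

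\textbf{Step 2 (Translate into coordinates of the partition).} By the definition of the cells $I_\ux=\bigcap_l B_l^{(x_l)}$, membership in $I_\ux$ records, coordinate by coordinate, which $B_l$'s contain the element. So if $i\in I_\ux$, $e_1(i)\in I_\uu$, $e_2(i)\in I_\uv$, then
\[
x_l=\indi{i\in B_l},\qquad u_l=\indi{e_1(i)\in B_l},\qquad v_l=\indi{e_2(i)\in B_l}.
\]
Substituting into Step 1 turns the column-$i$ condition for $B_l$ into $x_l+u_l+v_l\equiv 0\pmod 2$.

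\textbf{Step 3 (Conjoin over $l$ and over $i$).} Requiring the column-$i$ parity condition simultaneously for all $l=1,\dots,k$ is exactly $\ux=\uu+\uv\pmod 2$. Since the simultaneous dependency of $B_1,\dots,B_k$ is the conjunction over all $i\in[n]$ of the column-$i$ conditions, both directions of the ``iff'' follow at once: if each $B_l$ is a dependency, the parity constraint must hold at every column $i$; conversely, if the constraint $\ux=\uu+\uv$ is satisfied at every $i$, then coordinate $l$ of this vector identity gives the column-$i$ parity for $B_l$, and so $\sum_{j\in B_l} M_j=\ul 0$ for each $l$.

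\textbf{Main obstacle.} There is no real analytic obstacle; the statement is tautological once the notations $I_\ux$, $e_1(i)$, $e_2(i)$ are unpacked. The only care needed is to be consistent about the indexing convention (column $C_i$ carries the diagonal 1 in row $i$ plus two random 1's at $e_1(i),e_2(i)$), so that the three contributions to the column parity correspond exactly to the three coordinate vectors $\ux,\uu,\uv$.
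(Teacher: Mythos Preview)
Your proof is correct and follows essentially the same approach as the paper's own argument: both check the parity condition coordinate-by-coordinate, noting that $x_l=\indi{i\in B_l}$, $u_l=\indi{e_1(i)\in B_l}$, $v_l=\indi{e_2(i)\in B_l}$, and that each $B_l$ being a dependency forces $x_l+u_l+v_l\equiv 0\pmod 2$ in every column. The only cosmetic difference is that the paper phrases this as a two-case analysis ($x_m=0$ versus $x_m=1$) rather than via indicator functions, but the content is identical.
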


\begin{proof}
Let $\ux=(x_1,...,x_k)$ and consider $x_m$ for $1\leq m\leq k$. If $x_m=0$ then $i\notin B_m$, so either none or both of $j,j'$ are in $B_m$, and so zero or two unit entries in this column are in $B_m$. We must therefore have either $u_m=v_m=0$ or $u_m=v_m=1$ and $x_m=u_m+v_m$. If $x_m=1$ then $i \in B_m$ and so exactly one of $e_1(i),e_2(i)$ must also be in $B_m$. Hence $u_m=1,v_m=0$, or vice versa. Thus in all cases $x_m=u_m+v_m$.
\end{proof}

The main result of this section is the following.
\begin{lemma}\label{EXk}
Let $k \ge 1$ be a positive integer, and let $\bX_k$ count the number of simple $k$-sequences of large dependencies. Then $\E(\bX_k)\sim  1.$
\end{lemma}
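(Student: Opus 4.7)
The approach is a direct first-moment calculation. Any ordered $k$-tuple $(B_1,\ldots,B_k)$ of subsets of $[n]$ is uniquely specified by the ordered partition $(I_\uy)_{\uy\in\{0,1\}^k}$ it induces on $[n]$ via $I_\uy=\bigcap_i B_i^{(y_i)}$. By Lemma \ref{condist}, the event that $(B_1,\ldots,B_k)$ is a dependency decomposes as an independent per-column event whose probability is determined only by which block $I_\uy$ the column lies in. Writing $m=2^k$, $n_\uy=|I_\uy|$, and
$$P_\uy=\sum_{\uu+\uv=\uy}\frac{n_\uu\,n_\uv}{n^2}$$
(the per-column probability in the with-replacement model, up to negligible $O(1/n)$ terms from columns whose two random entries coincide), we obtain
$$\E(\bX_k)=\sum_{(n_\uy)}\binom{n}{n_{\uy_1},\ldots,n_{\uy_m}}\prod_\uy P_\uy^{n_\uy},$$
where the sum is restricted by the simplicity requirement that every $|B(\ux)|=\sum_\uy U(\ux,\uy)\,n_\uy$ lie in $J_1$. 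By Lemma \ref{Cor4} this forces $n_\uy=n/m+d_\uy$ with $|d_\uy|=O(\sqrt{n\log n})$ and $\sum_\uy d_\uy=0$.

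Next, I would expand both factors to sufficient order. Stirling's formula gives
$$\binom{n}{n_{\uy_1},\ldots,n_{\uy_m}}\sim \frac{m^{m/2}\,m^n}{(2\pi n)^{(m-1)/2}}\exp\!\left(-\frac{m}{2n}\sum_\uy d_\uy^2\right).$$
Substituting $n_\uy=n/m+d_\uy$ into $P_\uy$ and using $\sum_\uy d_\uy=0$ to cancel the linear-in-$d$ terms gives $P_\uy=m^{-1}+n^{-2}\sum_\uu d_\uu d_{\uy+\uu}$. Summing $n_\uy\log P_\uy$, the leading remainder is a cubic $n^{-2}\sum_{\uy,\uu}d_\uy d_\uu d_{\uy+\uu}=O(m^2 n^{-1/2}(\log n)^{3/2})=o(1)$, and the higher-order terms are $o(1)$ as well throughout the simplicity regime. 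Therefore $\prod_\uy P_\uy^{n_\uy}\sim m^{-n}$, and the only surviving $d$-dependence sits in the Gaussian weight from the multinomial coefficient.

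The remaining lattice sum is then
$$\E(\bX_k)\sim \frac{m^{m/2}}{(2\pi n)^{(m-1)/2}}\sum_{\substack{(d_\uy)\in\mathbb{Z}^m\\ \sum d_\uy=0}}\exp\!\left(-\frac{m}{2n}\sum_\uy d_\uy^2\right).$$
Parameterising the hyperplane $\sum_\uy d_\uy=0$ by the first $m-1$ coordinates and eliminating $d_{\uy_m}=-\sum_{i<m}d_{\uy_i}$, the quadratic form becomes $d^{\top}(I+J)d$ on $\mathbb{R}^{m-1}$, whose determinant is $m$ (eigenvalues $m$ once and $1$ with multiplicity $m-2$). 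The Gaussian integral approximation to the lattice sum therefore equals $(2\pi n/m)^{(m-1)/2}/\sqrt{m}$, and multiplying out gives
$$\E(\bX_k)\sim \frac{m^{m/2}}{(2\pi n)^{(m-1)/2}}\cdot\frac{(2\pi n/m)^{(m-1)/2}}{\sqrt{m}}=\frac{m^{m/2}}{m^{(m-1)/2}\sqrt{m}}=1.$$

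The main obstacle is the careful bookkeeping around the Gaussian weight: one must verify that, after the linear-in-$d$ cancellations from $\sum_\uy d_\uy=0$, the residual $d$-dependence of $\prod_\uy P_\uy^{n_\uy}$ is strictly $o(1)$ in the regime $|d_\uy|=O(\sqrt{n\log n})$, so that only the $-(m/2n)\sum_\uy d_\uy^2$ from the multinomial survives into the Gaussian exponent. Secondary issues are showing that the with-replacement coincidences contribute only a $(1+o(1))$ multiplicative factor overall, and that truncating the lattice sum to the simplicity region discards only an exponentially small Gaussian tail so the unrestricted Gaussian integral gives the correct asymptotic.
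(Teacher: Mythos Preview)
Your proof is correct and follows essentially the same approach as the paper: write $\E(\bX_k)$ as a sum over partitions $(n_\uy)$ of the multinomial coefficient times the per-column probabilities, substitute $n_\uy=n/2^k+d_\uy$, and use $\sum_\uy d_\uy=0$ to show that the quadratic-in-$d$ contribution from $\prod_\uy P_\uy^{n_\uy}$ vanishes, so $\prod_\uy P_\uy^{n_\uy}\sim 2^{-kn}$ uniformly on the simplicity region. The only difference is in the final step: where you expand the multinomial coefficient via Stirling and evaluate the resulting constrained Gaussian lattice sum explicitly (computing $\det(I+J)=m$), the paper instead keeps the multinomial coefficient intact and invokes the identity $\sum_{\ul h}\binom{n}{h_0,\ldots,h_K}=2^{kn}$ together with a Chernoff bound to show the restriction to $\Om$ loses only $O(n^{-2^k/3})$, which is a slightly cleaner route to the same conclusion.
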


\begin{proof}
We have to estimate the expected number of simple sequences $(B_1,...,B_k)$ of large dependencies. By \eqref{Ixin} of Lemma \ref{Cor4} the index sets $I_\ux$  have size  $|I_\ux|=(n/2^k)(1+ O(\sqrt{\log n/n}))$. Let $K=2^k-1$ as above, and let
\[
\Om=\set{\ul h =( h_0,h_1,...,h_K) : h_i \text{ satisfies \eqref{Ixin}}, \sum_{i=1}^K h_i \in J_1}.
\]
Then we define $\Phi(\ul h, k)$ by
\begin{flalign}
\E(\bX_k)= &\sum_{\ul h \in \Om} {n \choose h_0, h_1, \ldots, h_K} \prod_{\ux \ne 0}\brac{2 \sum_{\{\uu,\uv\} \atop \uu+\uv=\ux}\frac {h_\uu}{n} \frac{h_\uv}{n}}^{h_\ux}\brac{\sum_\uu \bfrac{h_\uu}{n}^2}^{h_0}\label{mul1}\\
=&\sum_{\ul h \in \Om} {n \choose h_0, h_1, \ldots, h_K} \Phi(\ul h, k).
\label{multinomial}
\end{flalign}
{\bf Explanation of \eqref{mul1}}.
Let $h_\ux=|I_\ux|$. The multinomial coefficient ${n \choose h_0, h_1, \ldots, h_K}$ counts the number of choices for the subsets $I_\ux$. In the product, in order for $B_1,...,B_k$ to be zero-sum, for $\ux \ne 0$  we need to cancel the diagonal entries $M_{j,j}=1$ of $j \in I_x$ within the columns indexed by $I_\ux$. This is achieved by putting one entry in rows $I_\uu$ and one in rows $I_\uv$ where  $\uu+\uv=\ux$. The last factor counts the choices for the entries of columns indexed by $I_0$ over the row index sets $I_\uu$, either zero or two in an index set, in order to preserve the zero-sum property.

Set $h_\ux=(n/2^k)(1+\e_\ux)$ where $|\e_\ux|=O(\sqrt{\log n/n})$. We note that  $\sum_\ux \e_\ux=0$, implies that
\[
\sum_{\ux}h_\ux\e_\ux=\frac{n}{2^k}\sum_\ux(\e_\ux+\e_\ux^2)=\frac{n}{2^k}\sum_\ux\e_\ux^2\text{ and } \sum_{\ux}h_\ux\e_\ux^2=\frac{n}{2^k}\sum_\ux\e_\ux^2+O\bfrac{\log^{3/2}n}{n^{1/2}}.
\]
And then Stirling's approximation implies that
\begin{align*}
{n \choose h_0, h_1, \ldots, h_K}&\sim \frac{n^n\sqrt{2\p n}}{\prod_{\ux\in \set{0,1}^k}((n/2^k)(1+\e_\ux))^{h_\ux} (\sqrt{2\p n/2^k})^{2^k}}\\
&=2^{kn}\exp \set{-\sum_{\ux\in \set{0,1}^k}^Kh_\ux\brac{\e_\ux-\frac{\e_\ux^2}{2}}+O(\log n)}\\
&=2^{kn}\exp \set{-\frac{n}{2^{k+1}}\sum_{\ux\in \set{0,1}^k}^K\e_\ux^2+O(\log n)}=2^{kn}n^{O(1)}.
\end{align*}
In addition, by considering random $2^k$-colorings of $[n]$ we see from the Chernoff bounds that
\begin{equation}\label{multinomial1}
\sum_{\ul h \in \Om}{n \choose h_0, h_1, \ldots, h_K}= 2^{kn}(1-O(n^{-2^k/3})).
\end{equation}
With respect to \eqref{mul1}, using $\sum_\ux \e_\ux=0$, we see that
\begin{flalign}
\brac{\sum_{\uu\in \set{0,1}^k} \bfrac{h_\uu}{n}^2}^{h_0} =&\brac{\sum_\uu \frac{1}{2^{2k}} (1+2\e_\uu+\e_\uu^2)}^{h_0} \nonumber\\
=&\bfrac{1}{2^{k}}^{h_0} \brac{1+\frac{1}{2^k}\sum_\uu \e_\uu^2}^{h_0}\nonumber\\
=&\bfrac{1}{2^{k}}^{h_0} \exp\set{\frac{n}{2^k}(1+\e_0) \log \brac{1+\sum_\uu \frac{\e_\uu^2}{2^k}}}\nonumber\\
&=\bfrac{1}{2^{k}}^{h_0} \exp\set{\frac{n}{2^{2k}} \sum_\uu \e_\uu^2+O\bfrac{\log^{3/2}n}{n^{1/2}}}.\label{epsu}
\end{flalign}
If $\ux \ne 0$ then each index ${\ul z}$ occurs exactly once in $\sum_{\{\uu,\uv\} \atop \uu+\uv=\ux}(\e_\uu+\e_\uv)$ and so $\sum_{\{\uu,\uv\} \atop \uu+\uv=\ux}(\e_\uu+\e_\uv)=\sum_{\ul z} \e_{\ul z}=0$. Therefore,
\begin{flalign*}
\brac{2 \sum_{\{\uu,\uv\} \atop \uu+\uv=\ux}\frac {h_\uu}{n} \frac{h_\uv}{n}}^{h_\ux}
=&\brac{2 \sum_{\{\uu,\uv\} \atop \uu+\uv=\ux} \frac{1}{2^{2k}}(1+\e_\uu+\e_\uv+\e_\uu\e_\uv)}^{h_\ux}\nonumber\\
=&\bfrac{1}{2^k}^{h_\ux} \brac{1+\frac{1}{2^k} \sum_{\{\uu,\uv\} \atop \uu+\uv=\ux} 2\e_\uu\e_\uv}^{h_\ux} \\
=&\bfrac{1}{2^k}^{h_\ux}\exp\set{\frac{n}{2^k}(1+\e_\ux) \log \brac{1+2\sum_{\{\uu,\uv\} \atop \uu+\uv=\ux} \frac{\e_\uu\e_\uv}{2^k}}}\\
&=\bfrac{1}{2^k}^{h_\ux}\exp\set{\frac{n}{2^{k}}\sum_{\{\uu,\uv\} \atop \uu+\uv=\ux} \frac{2\e_\uu\e_\uv}{2^k}+O\bfrac{\log^{3/2}n}{n^{1/2}}}.
\end{flalign*}
 Note that
\[
\Lambda =\sum_{\ux \ne 0} \sum_{\{\uu,\uv\} \atop \uu+\uv=\ux} 2\e_\uu\e_\uv=
\sum_\uu \e_\uu \sum_{\ux+\uu \atop \ux \ne 0}\e_{\ux+\uu}=\sum_\uu\e_\uu\sum_{\uv \ne \uu}\e_\uv,
\]
gives
\[
\Lambda +\sum_\uu \e_\uu^2 = \brac{\sum_\uu \e_\uu}^2=0.
\]
Thus using $\sum_\ux h_\ux=n$,
\begin{flalign}
\Phi(\ul h, k)=& \bfrac{1}{2^k}^{\sum_\ux h_\ux}
\exp\set{ \frac{n}{2^{2k}}\brac{\sum_\uu \e_\uu^2 +\sum_{\ux \ne 0} \sum_{\{\uu,\uv\} \atop \uu+\uv=\ux} 2\e_\uu\e_\uv }+ O\bfrac{\log^{3/2}n}{n^{1/2}}}\nonumber\\
=&\frac{1}{2^{kn}} e^{ O({\log^{3/2}n/\sqrt{n}})}.\label{this1}
\end{flalign}
It follows from \eqref{multinomial}, \eqref{multinomial1} and \eqref{this1} above that
\begin{equation}\label{error}
\E( \bX_k)=1+O\bfrac{\log^{3/2}n}{\sqrt n}=1+o(1).
\end{equation}
\end{proof}

\section{Conditional expected number of small zero-sum sets}\label{ESL}
Let  $(B_1,\ldots,B_k)$ be a fixed sequence of subsets of $[n]$ with $|B_i|\in J_1$ for $i=1,2,\ldots,k\leq \om$. Let $\cB$ be the event
\begin{equation}\label{Bee}
\cB=\set{(B_1,...B_k)\text{ is a simple sequence of large row dependencies}}.
\end{equation}

\begin{lemma}\label{condista}
Given $\cB$ and $i\in I_\ux$, { $|I_\ux|=h_\ux$,} the distribution of the row indices $\ell,\ell'$ of the other two non-zeros in column $i$ is as follows.
 \\
 If $\ux\neq 0$ then choose $\uu,\uv$ such that $\ux=\uu+\uv\ \mod 2$ with probability
\[
p(\uu,\uv)={ \frac{h_\uu h_\uv}{\sum_{\uy+\uz=\ux}h_\uy h_\uz},}
\]
and then randomly choose $\ell\in I_\uu,\ell'\in I_{\uv}$. If $\ux=0$ then choose $\uu$ with probability
\[
p(\uu,\uu)=\frac{h_\uu^2}{\sum_{\uy\in \set{0,1}^k}h_\uy^2},
\]
and then randomly choose $\ell,\ell'\in I_\uu$.
\end{lemma}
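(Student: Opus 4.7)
The plan is to exploit the column-wise independence built into the with-replacement model to reduce the statement to a single-column calculation. Under that model the two off-diagonal row indices $(e_1(i),e_2(i))$ in column $i$ form an ordered pair uniform on $[n]^2$, and these pairs are mutually independent across $i\in[n]$. By Lemma \ref{condist}, the event $\cB$ factors as a product of one event per column: for each $i\in I_\ux$, we require $(e_1(i),e_2(i))\in I_\uu\times I_\uv$ for some ordered pair $(\uu,\uv)$ with $\uu+\uv\equiv\ux\pmod 2$. Conditioning a product of independent laws on a product event preserves independence across columns and, within each column, simply restricts the ambient uniform law on $[n]^2$ to the column's allowed set.

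Given this reduction, the rest is a direct Bayes computation. For a column with $i\in I_\ux$ and $\ux\neq\bo$, the unconditional probability that $(e_1(i),e_2(i))\in I_\uu\times I_\uv$ is $h_\uu h_\uv/n^2$, so normalising over the $2^k$ ordered pairs $(\uy,\uz)$ with $\uy+\uz=\ux$ yields
\[
\Pr\!\bigl[(e_1(i),e_2(i))\in I_\uu\times I_\uv\,\bigm|\,\cB\bigr]=\frac{h_\uu h_\uv}{\sum_{\uy+\uz=\ux}h_\uy h_\uz}=p(\uu,\uv),
\]
and conditional on this ``type'' the pair is uniform on $I_\uu\times I_\uv$, i.e.\ $\ell$ is uniform on $I_\uu$ and $\ell'$ on $I_\uv$, independently. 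The case $\ux=\bo$ is identical except that the constraint $\uu+\uv=\bo$ forces $\uv=\uu$, so the only admissible types are the diagonal ones $(\uu,\uu)$ and the normalisation gives $p(\uu,\uu)=h_\uu^2/\sum_\uy h_\uy^2$ with $(\ell,\ell')$ uniform on $I_\uu\times I_\uu$.

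There is no genuine obstacle here: the substantive input is Lemma \ref{condist}, and what remains is just the elementary fact that conditioning independent uniform variables on a product event leaves them independent with each factor uniform on its feasible set. The only point worth flagging is that the two off-diagonal entries form an \emph{ordered} pair in the with-replacement model, which is why $h_\uu h_\uv$ (and not $2h_\uu h_\uv$) is the correct atomic weight and why the sum defining $p(\uu,\uv)$ is naturally indexed by ordered pairs summing to $\ux$.
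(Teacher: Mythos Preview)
Your proof is correct and follows exactly the approach the paper takes: the paper's own proof is a single sentence observing that the claim follows from column-wise independence in the with-replacement model together with the characterisation in Lemma~\ref{condist}, and you have simply unpacked that sentence into the explicit Bayes calculation. Your remark about ordered pairs is a useful clarification but does not constitute a different route.
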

\begin{proof}
This follows from the fact that the non-zeros in each column are independently chosen with replacement and from the condition given in Lemma \ref{condist}.
\end{proof}
{
For $m \le \om$, let $S_j,\;j=1,2,\ldots,m$ be  pairwise disjoint  subsets of the rows of $M$, where $|S_j| \le \om$. Let $S=\bigcup_{j=1}^m S_j$ and $s=|S|$. For $j=1,2,\ldots,m$  define the following events
\begin{align*}
\cS_j=\{ S_j\text{ is a small zero-sum set}\}, \quad
\cS_j^*=\{ S_j\text{ is a small fundamental zero-sum  set}\}.
\end{align*}
Let
\[
\cS=\bigcap_{j=1}^m\cS_j \qquad\text{ and }\qquad\cS^*=\bigcap_{j=1}^m\cS_j^*.
\]
}

We need to understand the conditioning imposed by {  the event $\cB$ in \eqref{Bee} on the small dependencies.}

\begin{lemma}\label{smallcond}
\begin{equation}\label{PFL}
\Pr( \cS^*\mid  \cB)\sim \Pr(\cS^*).
\end{equation}
\end{lemma}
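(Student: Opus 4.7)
The plan is to factor
\[
\Pr(\cS^*\mid\cB)=\Pr(\cS\mid\cB)\cdot\Pr(\cS^*\mid\cS\cap\cB)
\]
and to prove that each factor matches its unconditional analogue, namely $\Pr(\cS)$ and $\Pr(\cS^*\mid\cS)=\prod_{j=1}^m \k_{s_j}$. The two crucial inputs are Lemma \ref{condista}, which says that conditional on $\cB$ the non-zero choices in distinct columns are still independent with the distribution described there, and the deterministic consequence of Lemma \ref{Cor4} that $|I_{\ul x}|\sim n/2^k$ uniformly in $\ul x\in\set{0,1}^k$. Together these allow a column-by-column analysis.

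For the first factor, I would write $\Pr(\cS\mid\cB)=\prod_{i=1}^n q_i$ and $\Pr(\cS)=\prod_{i=1}^n p_i$, where $q_i$ and $p_i$ denote the conditional and unconditional probabilities that column $i$ satisfies its contribution to $\cS$. Setting $a_{j,\ul u}=|S_j\cap I_{\ul u}|/|I_{\ul u}|$, for $i\in S_j\cap I_{\ul x}$ Lemma \ref{condista} gives
\[
q_i\;=\;\sum_{\ul u+\ul v=\ul x}p(\ul u,\ul v)\brac{a_{j,\ul u}(1-a_{j,\ul v})+(1-a_{j,\ul u})a_{j,\ul v}}.
\]
Since $p(\ul u,\ul v)\sim 2^{-k}$ and the linear part $\sum_{\ul u+\ul v=\ul x}(a_{j,\ul u}+a_{j,\ul v})$ collapses to $2\sum_{\ul w}a_{j,\ul w}\sim 2\cdot 2^k s_j/n$, one gets $q_i\sim 2s_j/n=p_i(1+O(2^k s_j/n))$. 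An analogous calculation for $i\notin S$, splitting the event into ``both non-zeros outside $S$'' and ``both in the same $S_j$'', yields $q_i=p_i(1+O(4^k s^2/n^2))$. Multiplying the ratios $q_i/p_i$ over all $n$ columns, the cumulative multiplicative error is $o(1)$ provided $m$, $\om$ and $2^k$ grow slowly enough, whence $\Pr(\cS\mid\cB)\sim\Pr(\cS)$.

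For the second factor, I would argue that conditional on $\cS\cap\cB$ each functional digraph $D_{S_j}$ is still asymptotically a uniform random functional digraph on $S_j$, so its probability of being connected is still $\k_{s_j}$. Indeed, for a column $i\in S_j\cap I_{\ul x}$ the conditional probability that the unique non-zero of column $i$ lying in $S_j$ is a prescribed row $r\in S_j\cap I_{\ul w}$ equals $\sim 2/n$ (summing the two orderings of $(\ell,\ell')$ in Lemma \ref{condista}), and this value is independent of $\ul w$; dividing by $q_i\sim 2s_j/n$ gives the uniform value $1/s_j+o(1/s_j)$. Disjointness of the $S_j$'s together with conditional column independence then give $\Pr(\cS^*\mid\cS\cap\cB)\sim\prod_{j=1}^m \k_{s_j}=\Pr(\cS^*\mid\cS)$, and combining with the first factor yields $\Pr(\cS^*\mid\cB)\sim\Pr(\cS^*)$.

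The main technical point is controlling the cumulative error in the product $\prod_i q_i/p_i$ over all $n$ columns: the per-column error is $O(2^k s_j/n)$ for $i\in S$ and smaller elsewhere, so the uniformity $|I_{\ul x}|\sim n/2^k$ from Lemma \ref{Cor4} is essential to make the pair-weights $p(\ul u,\ul v)$ of Lemma \ref{condista} nearly uniform and drive the telescoping cancellations; one also needs the parameters $k$, $m$, $\om$ to remain small enough that $2^k m \om^2=o(n)$. Once this is in place, the conditioning on $\cB$ becomes asymptotically invisible to local events supported on small disjoint sets of rows.
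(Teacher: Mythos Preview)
Your approach is correct and is essentially the paper's own proof: both compute $\Pr(\cS\mid\cB)$ as a product over columns using the conditional distribution of Lemma \ref{condista} together with the near-uniformity $|I_{\ul x}|\sim n/2^k$ from Lemma \ref{Cor4}, obtain $\prod_j(2s_j/n)^{s_j}e^{-2s}\sim\Pr(\cS)$, and then pass to $\cS^*$ by multiplying through by the connectivity factors $\k_{s_j}$. One small slip worth fixing: for $i\in S_j$ the second non-zero must avoid \emph{all} of $S$ (otherwise some $\cS_t$, $t\neq j$, is violated), so your factor $(1-a_{j,\ul v})$ should be $(1-\sum_t a_{t,\ul v})$; this affects only second-order terms and does not change the asymptotic conclusion.
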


\begin{proof}
{ Let $I_\ux$, $\ux \in \{0,1\}^k$, be as defined in Section \ref{back}.
Let $h_\ux=|I_\ux|$. By Lemma \ref{Cor4} we can assume that $|I_\ux|=h_\ux \sim n/2^k$ for all $\ux\in \set{0,1}^k$.
For  $j=1,2,\ldots,m$, let $S_{j,\ux}=S_j \cap I_\ux$ and $s_{j,\ux}=|S_{j,\ux}|$.
Similarly, let $S_\ux=S\cap I_\ux$,  $s_\ux=|S_\ux|$.  These definitions include $\ux=\bo$, so that
 $S_{\bo}=I_\bo\cap S$ and $s_{j,\bo}=|S_{j,\bo}|$ etc.}

For each $i \in [n]$, we consider the probability that column $i$ of $M$ is consistent with $\cS$ according to four cases.
\paragraph{{ Case 1: $i\in I_{\bo}\setminus S$.}}
For each column $i\in { I_\bo \sm S= I_{\bo}\setminus S_{\bo}}$, we must estimate the probability that the two non-zeros $e_1(i),e_2(i)$ are in rows consistent with the occurrence of $\cS$.  Because $i \in I_{\bo}$ and $\cB$ occurs, we know from Lemma \ref{condist} that $e_1(i),e_2(i)\in I_\uu$ for some $\uu \in \{0,1\}^k$. For $\cS$ to occur, we require that zero or two of  $e_1(i),e_2(i)$ fall in $S_\uu$, an event of conditional probability $(1-s_\uu/h_\uu)^2+(s_\uu/h_\uu)^2$.

Let $E_\uu$ denote the number of non-zero pairs from $I_{\bo}\setminus S_{\bo}$ falling in $I_\uu$. Then the conditional probability that the non-zeros of $I_{\bo}\setminus S_{\bo}$ are consistent with $\cS$ is given by
\beq{L00}{
\Pr(I_{\bo}\setminus S_{\bo} \text{ is consistent } \cS\mid\cB) = \E\brac{\prod_\uu \brac{ 1-2\frac{s_\uu}{h_\uu}+2\bfrac{s_\uu}{h_\uu}^2}^{ E_\uu}}.
}

Given $\cB$, we see that $E_\uu$ is  distributed as $Bin(h_{\bo}-s_{\bo}, p(\uu,\uu))$, and has expectation
\[
\E(E_\uu)= (h_{\bo}-s_{\bo}) \frac{ h_\uu^2}{h_{\bo}^2+h_1^2+\cdots +(h_{2^k-1})^2} \sim \frac{h_{\bo}}{2^k}.
\]
{ By Lemma \ref{Cor4} we can assume that $h_\bo \sim N=n/2^k$.}
The Chernoff bounds imply that $E_\uu$ is concentrated around its mean $(h_{\bo}-s_{\bo})p(\uu,\uu)$.  Thus,
\beq{suff}{
\left|{E_\uu-\frac{h_{\bo}}{2^k}}\right|\leq n^{2/3}\quad\text{with probability at least }1-e^{-\Omega(n^{1/3})}.
}
Going back to \eqref{L00} and using \eqref{suff} gives
\mult{L0}{
\Pr(I_{\bo}\setminus S_{\bo} \text{ is consistent with the occurrence of } \cS\mid\cB) \sim\\ \prod_\uu\brac{1-\frac{2s_\uu}{N}}^{N/2^k} \sim \exp\set{-2\sum_\uu \frac{s_\uu}{2^k}}=e^{-s/2^{k-1}}.
}

\paragraph{{ Case 2: $i\in I_\ux\setminus S$, $\ux \ne \bo$.}}
Given $\cB$, and $i \in I_\ux$, { we know from Lemma \ref{condist}} that the non-zeros $e_1(i),e_2(i)$ of column $i$ lie in $I_\uu, I_{\ux+\uu}$ respectively, { for some} $\uu \in \{0,1\}^k$. The probability of this is $p(\uu,\ux+\uu)$. The number $E_\ux(\uu,\ux+\uu)$ of such pairs of non-zeros in $I_\uu, I_{\ux+\uu}$ has distribution $Bin((h_\ux-s_\ux) p(\uu,\ux+\uu))$, and expectation asymptotic to  $(h_\ux-s_\ux)/2^{k-1}$.

The rows of $S_1,\ldots,S_m$ have to be zero-sum in this column, so either exactly one non-zero falls in { some}  $S_{j,\uu},S_{j,\ux+\uu}$ for some $1\leq j\leq m$ or  exactly one non-zero falls in { some} $I_\uu\setminus S_\uu,I_{\ux+\uu}\setminus S_{\ux+\uu}$. The { conditional} probability of this  is
\begin{flalign*}
P(\uu,\ux+\uu)=&\E\brac{\brac{\sum_{j=1}^m
\frac{s_{j,\uu}}{h_\uu}\frac{s_{j,\ux+\uu}}{h_{\ux+\uu}} +\frac{h_\uu-s_\uu}{h_\uu} \frac{h_{\ux+\uu}-s_{\ux+\uu}}{h_{\ux+\uu}}}^{E_\ux(\uu,\ux+\uu)}}
\\
\sim&\brac{\sum_{j=1}^m \frac{s_{j,\uu}s_{j,\ux+\uu}}{N^2} +\frac{N-s_\uu}{N}\frac{N-s_{\ux+\uu}}{N}}^{(N-s_\ux)/2^{k-1}}\\
& \sim e^{-(s_\uu+s_{\ux+\uu})/2^{k-1}}.
\end{flalign*}
For a given $\ux$ there are $2^{k-1}$ unordered pairs $S_\uu, S_{\ux+\uu}$, so
\beq{sumell}{
\Pr(I_\ux\setminus S_\ux \text{ is consistent with } \cS) \sim \exp\set{-\frac{1}{2^{k-1}}\sum_{\{u,\ux+\uu\}} (s_\uu+s_{\ux+\uu})}= e^{-s/2^{k-1}}\;.
}
Note that, in the sum in \eqref{sumell} $s_\uu+s_{\ux+\uu}$ and $s_{\ux+\uu}+s_{\uu}$, contribute as one term.
Thus
\begin{equation}\label{ALX}
\Pr( I_\ux\setminus S_\ux \text{ is consistent with } \cS,\forall \ux \ne \bo)\sim e^{-(2^k-1) s/2^{k-1}}.
\end{equation}

\paragraph{Case 3: $i\in S_{j,\ux} \subseteq I_\ux$, $\ux \ne 0$.}
Suppose that the pair $e_1(i), \, e_2(i)$ fall in $I_\uu, I_{\uu+\ux}$.
For $i \in S_{j,\ux}$, one  non-zero needs to be in $S_j$, and the other to { completely avoid $S$.}
Let $\uv=\ux+\uu$. The probability  this happens  is
\beq{Pjx}{
P_j(\uu,\uv)\sim \frac{1}{2^{k-1}}\brac{ \frac{s_{j,\uu}}{h_\uu}
\frac{h_\uv-{ s_\uv}}{h_\uv}+
\frac{s_{j,\uv}}{h_\uv}\frac{h_\uu-{ s_\uu}}{h_\uu}}.
}
The events $\set{\uu,\ux+\uu}$ are disjoint and { are an exhaustive dissection of $S_j$}. For a given $i \in S_{j,\ux}$, the probability $p(i,j)$ of success   is
\mult{dab}{
p(i,j) =\sum_{\{\uu,\uu+\ux\}} P_j(\uu,\uu+\ux)\sim \frac{1}{2^{k-1}} \sum_{\uu,\uv=\ux+\uu}\brac{ \frac{s_{j,\uu}}{N} \frac{N-{ s_{\uv}}}{N}+ \frac{s_{j,\uv}}{N}\frac{N-{ s_{\uu}}}{N}}\\
\sim \frac{s_j }{N 2^{k-1}}\brac{1+O\bfrac{\om}{N}}.
}
Every column of $S_{j,\ux}$ has to succeed or { some $S_t$} is not a small zero-sum set. Thus
\[
\Pr({ S_{j,\ux}} \text{ succeeds})\sim \bfrac{s_j (1+O(s/N))}{N 2^{k-1}}^{s_{j,\ux}}.
\]
As { $\sum_{\ux \ne \bo} s_{j,\ux}=s_j-s_{j,\bo}$,} the above allows us to calculate
\begin{equation}\label{LL}
{ \Pr(S_{j,\ux} \text{ succeeds }\forall\ux \ne \bo)} \sim \bfrac{s_j}{N 2^{k-1}}^{s_j-s_{j,\bo}}.
\end{equation}
\paragraph{Case 4: $i\in S_{j,\bo} \subseteq { I_{\bo}}$.}
In the case that $\ux=\bo$, and $S_{j,\bo} \subseteq I_{\bo}$, the non-zeros in a column of $S_{j,\bo}$ must both fall in the same index set $I_\uu$; one in $S_{j,\uu}$ and one in $I_\uu\setminus S_{j,\uu}$.  Thus $P(\uu,\uu)$ is now summed over all $I_\uu$, a total of $2^k$ such sets. For $i \in S_{j,\bo}$, the probability $p(i)$ of success is
\[
p(i) =\sum_{\{\uu,\uu\}} P(\uu,\uu)\sim \frac{1}{2^{k}} \sum_{\uu}\brac{2 \frac{s_{j,\uu}}{N} \frac{N-s_{j,\uu}}{N}}
\sim \frac{s_j}{N 2^{k-1}} \brac{1+O\bfrac{\om}{N}}.
\]
The final term is the same as in \eqref{dab}, and we obtain
\beq{36}{
{ \Pr(S_{j,\bo} \text{ succeeds})\sim \bfrac{s_j}{N 2^{k-1}}^{s_{j,\bo}}}
}
Using \eqref{L0}, \eqref{ALX}, \eqref{LL} and \eqref{36}, we obtain
\beq{crispy}{
\Pr(\cS\mid\cB) \sim \prod_{j=1}^m\bfrac{s_j}{N 2^{k-1}}^{s_j} e^{- (2^k-1) s/2^{k-1}} e^{-s/2^{k-1}} =\prod_{j=1}^m\bfrac{2s_j}{n}^{s_j} e^{-2s}.
}
{ Applying \eqref{EXL} to the right hand side of \eqref{crispy}}
completes the proof of $\Pr(\cS\mid\cB)\sim\Pr(\cS)$. To replace $\cS$ by $\cS^*$  the conditional probability that $S_j$ is fundamental is obtained by multiplying by $\k_{s_j}$ of \eqref{kappal}. This completes the proof of the lemma.
\end{proof}

{ We can now use inclusion-exclusion to prove the following lemma.}
\begin{lemma}\label{dimsmall}
Let $\S_\s$ be the event that there are exactly $\s$ disjoint small fundamental dependencies. Then,
\[
\Pr(\S_\s\mid\cB)\sim\frac{\f_R^\s e^{-\f_R}}{\s!}\sim \Pr(\S_\s).
\]
\end{lemma}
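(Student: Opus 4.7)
The plan is to apply the method of moments to $Z$ conditioned on $\cB$, directly paralleling the unconditional argument of Lemma \ref{smallsmall}. Since $\S_\s = \set{Z=\s}$, it suffices to show that for each fixed $m \ge 0$,
\[
\E((Z)_m \mid \cB) \sim \f_R^m,
\]
where $(Z)_m = Z(Z-1)\cdots(Z-m+1)$ is the $m$-th falling factorial.

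First I would expand
\[
(Z)_m = \sum_{(S_1,\ldots,S_m)} \prod_{j=1}^m \mathbbm{1}_{\cS_j^*},
\]
where the sum is over ordered $m$-tuples of distinct small subsets of $[n]$. By the definition of \emph{fundamental}, two fundamental dependencies must be disjoint, so only pairwise disjoint tuples contribute. Taking conditional expectations and applying Lemma \ref{smallcond} termwise yields
\[
\E((Z)_m \mid \cB) = \sum_{(S_1,\ldots,S_m)\text{ disjoint}} \Pr\brac{\bigcap_{j=1}^m \cS_j^* \,\Big|\, \cB} \sim \sum_{(S_1,\ldots,S_m)\text{ disjoint}} \Pr\brac{\bigcap_{j=1}^m \cS_j^*} = \E((Z)_m),
\]
which is asymptotic to $\f_R^m$ by the computation in \eqref{Zk1}--\eqref{Zk2}. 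The method of moments then gives that $(Z\mid\cB)$ is asymptotically Poisson with parameter $\f_R$, so $\Pr(\S_\s\mid\cB)\sim \f_R^\s e^{-\f_R}/\s!$; the matching unconditional asymptotic is Lemma \ref{smallsmall}.

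The main obstacle is ensuring the $\sim$ supplied by Lemma \ref{smallcond} is uniform enough to survive the summation. Since $m$ is fixed, the series defining $\f_R^m$ converges absolutely in the sizes $s_1,\ldots,s_m$, so contributions from tuples with $\max_j s_j$ tending to infinity are negligible (exactly as exploited in Lemma \ref{smallsmall}). It therefore suffices to have $\Pr(\cS^*\mid\cB)/\Pr(\cS^*)\to 1$ uniformly over $\sum_j s_j \le T$ for each fixed $T$, and this is exactly what the explicit estimate \eqref{crispy} delivers, since the error terms there and in the Chernoff bound \eqref{suff} are uniform in this bounded regime.
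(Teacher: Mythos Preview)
Your argument is essentially the paper's own proof, phrased in slightly different language: the paper defines $T_\ell=\frac{1}{\ell!}\sum\Pr(\bigcap\cS_i^*\mid\cB)$, shows $T_\ell\sim\f_R^\ell/\ell!$ via Lemma~\ref{smallcond}, and then applies inclusion--exclusion $\Pr(\S_\s\mid\cB)=\sum_{\ell\ge\s}(-1)^{\ell-\s}\binom{\ell}{\s}T_\ell$, which is exactly the factorial-moment inversion underlying the method of moments for Poisson convergence. Your uniformity discussion matches the paper's implicit truncation.

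One small correction: the claim ``by the definition of \emph{fundamental}, two fundamental dependencies must be disjoint'' is not right as stated. In this paper \emph{fundamental} means \emph{minimal} (the associated functional digraph is connected), and two minimal dependencies can in principle overlap; disjointness is the content of Lemma~\ref{smalldisjoint}, a w.h.p.\ statement. You therefore need that the overlapping contribution remains negligible \emph{conditionally on} $\cB$, so that the restriction to disjoint tuples in $\E((Z)_m\mid\cB)$ is justified. This follows by the same first-moment bound as in \eqref{zz1}, applied via Lemma~\ref{condista} (the conditional column law is still uniform up to $1+o(1)$ factors on the relevant cells), and the paper's proof relies on the same point without stating it explicitly.
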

\begin{proof}
{ Let $s=s_1+\cdots+s_\ell$, then}
\begin{align*}
T_\ell=&\frac{1}{\ell!}\sum_{1\leq s_1,\ldots,s_\ell\leq\om}\sum_{|S_i|=s_i, \atop i=1,\ldots,\ell} \Pr\brac{\bigcap_{i=1}^\ell \cS_i^*\bigg|\cB}\sim\frac{1}{\ell!}\sum_{1\leq s_1,\ldots  s_\ell\leq\om}\sum_{|S_i|=s_i, \atop i=1,\ldots,\ell} \Pr\brac{\bigcap_{i=1}^\ell \cS_i^*}\\
\sim& \frac{1}{\ell!}\sum_{1\leq s_1,\ldots,s_\ell\leq\om}{ \binom{n}{s_1,\ldots,s_\ell, n-s}} \prod_{i=1}^\ell \bfrac{2s_i}{n}^{s_i}e^{-2s_i}\k_{s_i}\sim \frac{1}{\ell!}\sum_{1\leq s_1,\ldots s_\ell\leq\om}\prod_{i=1}^\ell \frac{(2s_i)^{s_i}}{s_i!}e^{-2s_i}\k_{s_i}\\
\sim&\frac{1}{\ell!}\brac{\sum_{s=1}^\infty \frac{(2e^{-2})^{s}}{s}\s_{s}}^\ell\sim \frac{\f_R^\ell}{\ell!}.
\end{align*}
The first approximation follows from Lemma \ref{smallcond} and the second from \eqref{EXL}, \eqref{kappal}.

Using Inclusion-Exclusion, we have
\[
\Pr(\S_\s\mid\cB)=\sum_{\ell\geq \s}(-1)^{ \ell-\s}\binom{\ell}{\s}T_\ell\sim \sum_{\ell\geq \s}(-1)^{\ell-\s}\binom{\ell}{\s}\frac{\f_R^\ell}{\ell!}=\frac{\f_R^\s e^{-\f_R}}{\s!}.
\]
{ Lemma \ref{smallsmall}} gives  the unconditional probability.
\end{proof}
Let $\bX_k$ count the number of simple $k$-sequences as in Lemma \ref{EXk}.
\begin{lemma}\label{X|d}
If $\s=O(1)$ then $\E(\bX_k\mid \S_\s)\sim 1$.
\end{lemma}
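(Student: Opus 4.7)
\textbf{Proof plan for Lemma \ref{X|d}.}

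The plan is to write the conditional expectation as a ratio and then apply the previous lemmas as a kind of Bayes inversion. Specifically,
\[
\E(\bX_k\mid \S_\s)=\frac{\E(\bX_k\ind{\S_\s})}{\Pr(\S_\s)}
=\frac{1}{\Pr(\S_\s)}\sum_{(B_1,\ldots,B_k)}\Pr(\cB(B_1,\ldots,B_k)\cap\S_\s),
\]
where the sum is over all ordered $k$-tuples of subsets of $[n]$ and $\cB(B_1,\ldots,B_k)$ is the event in \eqref{Bee} that $(B_1,\ldots,B_k)$ is a simple sequence of large row dependencies. Factor each summand as
$\Pr(\cB)\Pr(\S_\s\mid\cB)$.

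The next step is to apply Lemma \ref{dimsmall} to replace $\Pr(\S_\s\mid\cB)$ by $\Pr(\S_\s)$ inside the sum. The key observation is that the asymptotic $\Pr(\S_\s\mid\cB)\sim\Pr(\S_\s)$ provided by Lemma \ref{dimsmall} is \emph{uniform} over all simple $(B_1,\ldots,B_k)$: its proof relies on Lemma \ref{smallcond}, which in turn uses only the fact that $|I_\ux|\sim n/2^k$ for all $\ux\in\{0,1\}^k$ (Lemma \ref{Cor4}), a property guaranteed by the definition of simple. Consequently
\[
\sum_{(B_1,\ldots,B_k)}\Pr(\cB)\Pr(\S_\s\mid\cB)\;\sim\;\Pr(\S_\s)\sum_{(B_1,\ldots,B_k)}\Pr(\cB)=\Pr(\S_\s)\,\E(\bX_k).
\]
Combining with Lemma \ref{EXk}, which gives $\E(\bX_k)\sim 1$, and dividing through by $\Pr(\S_\s)=\Th(1)$ (from Lemma \ref{smallsmall}, using $\s=O(1)$) yields $\E(\bX_k\mid\S_\s)\sim 1$.

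\textbf{Main obstacle.} The only delicate point is justifying the passage of the asymptotic $\Pr(\S_\s\mid\cB)\sim\Pr(\S_\s)$ inside the sum, i.e.\ demonstrating that the $(1+o(1))$ error in Lemma \ref{dimsmall} is genuinely uniform in the choice of simple sequence $(B_1,\ldots,B_k)$. I would address this by revisiting the cases in the proof of Lemma \ref{smallcond}: each estimate \eqref{L0}, \eqref{ALX}, \eqref{LL}, \eqref{36} depends on $(B_1,\ldots,B_k)$ only through the block sizes $h_\ux=|I_\ux|$, and Lemma \ref{Cor4} forces $h_\ux=(n/2^k)(1+O(\sqrt{\log n/n}))$ uniformly over all simple sequences. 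Hence the error terms are bounded by a single function of $n$ tending to $0$, which is exactly what is needed to pull the factor $\Pr(\S_\s)$ outside the sum. The Chernoff-type bound in \eqref{suff} likewise holds with a uniform exceptional probability, so contributions from sequences for which concentration fails are negligible. Once uniformity is in hand, the remaining arithmetic is immediate.
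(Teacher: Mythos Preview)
Your proposal is correct and follows essentially the same route as the paper. Both arguments write $\E(\bX_k\mid\S_\s)=\sum_\cB\Pr(\cB)\Pr(\S_\s\mid\cB)/\Pr(\S_\s)$ via Bayes, replace $\Pr(\S_\s\mid\cB)$ by $\Pr(\S_\s)$ uniformly in $\cB$, and finish with Lemma~\ref{EXk}; the paper merely unwinds the inclusion--exclusion from Lemma~\ref{dimsmall} explicitly (reintroducing the $T_\ell$ sums) before invoking Lemma~\ref{smallcond}, whereas you cite Lemma~\ref{dimsmall} as a black box together with the uniformity observation---which is exactly the right point to flag.
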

\begin{proof}
\begin{align*}
\E(\bX_k\mid \S_\s)&=\sum_{\cB=(B_1,\ldots,B_k)}\Pr(\cB\mid\S_\s)\\
&=\sum_{\cB=(B_1,\ldots,B_k)}\frac{\Pr(\S_\s\mid\cB)\Pr(\cB)}{\Pr(\S_\s)}\\
&=\sum_{\cB=(B_1,\ldots,B_k)}\frac{\Pr(\cB)}{\Pr(\S_\s)}\sum_{\ell\geq \s}(-1)^{\ell-\s}{ \binom{\ell}{\s}}T_\ell \\
&=\sum_{\cB=(B_1,\ldots,B_k)}\frac{\Pr(\cB)}{\Pr(\S_\s)}\sum_{\ell\geq \s}(-1)^{\ell-\s}{ \binom{\ell}{\s}}\frac{1}{\ell!}\sum_{1\leq s_1,\ldots,s_\ell\leq\om}\sum_{|S_i|=s_i,\atop i=1,\ldots,\ell} \Pr\brac{\bigcap_{i=1}^\ell \cS_i^*\bigg|\cB}\\
&\sim\sum_{\cB=(B_1,\ldots,B_k)}\frac{\Pr(\cB)}{\Pr(\S_\s)}\sum_{\ell\geq \s}(-1)^{\ell-\s}{ \binom{\ell}{\s}}\frac{1}{\ell!}\sum_{1\leq s_1,\ldots,s_\ell\leq\om}\sum_{|S_i|=s_i,\atop i=1,  \ldots,\ell} \Pr\brac{\bigcap_{i=1}^\ell \cS_i^*}\\
&\sim\sum_{\cB=(B_1,\ldots,B_k)}\frac{\Pr(\cB)}{\Pr(\S_\s)}\Pr(\S_\s)\\
&=\E(\bX_k)\sim 1.
\end{align*}
\end{proof}

\section{Joint distribution of small and large dependencies}\label{Jointd}
\subsection{$P_n(0,d)$: the case of no small fundamental dependencies.}
Let $P_n(0,d)$ be the probability that $M \in \ul M(n)$ has no small fundamental dependencies and the maximum number of large simple dependencies is $d$. Let $\pi(d)$ be given by \eqref{ldef}.
The purpose of this section is to prove the following.
\beq{S0Lk}{
P_n(0,d)\sim \pi(d)\;e^{-\f}.
}

Let $V$ be the vector space generated by the dependencies.
 Let $\cL_\l$ be the event that the dimension of $V$ is $\l$. Let
\[
p(0,\l)=\Pr(\S_{0}\wedge \cL_{\l})\text{ and } p(0)=\Pr(\S_0).
\]

\begin{lemma}\label{dimd}
For $0\leq \l=O(1)$, $p(0,\l)\sim P(0,\l)$ where $P(0,\l)=\pi(\l)\;e^{-\f}$.
\end{lemma}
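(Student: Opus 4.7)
The strategy is to convert Lemma~\ref{X|d} into moment information on the dimension $\dim V$ conditioned on $\S_0$, invert those moments to identify the limiting distribution with $\pi$, and then multiply by $\Pr(\S_0)\sim e^{-\f}$ from Lemma~\ref{smallsmall}.

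First I would observe that on the event $\S_0$ every nonzero element of $V$ is a large dependency. Indeed, any dependency contains componentwise a fundamental dependency, so a small element of $V$ would force a small fundamental dependency, contradicting $\S_0$; together with Theorem~\ref{th1} (which rules out intermediate sizes) this forces every element of $V\setminus\{0\}$ to lie in the large range $J_1$. Consequently, on $\S_0\cap\cL_\l$ a $k$-tuple $(B_1,\ldots,B_k)$ is simple in the sense of \eqref{xx1} exactly when the $B_i$ are linearly independent in the $\l$-dimensional $GF(2)$-space $V$, since then every nonzero XOR combination is automatically large. The number of ordered $k$-tuples of linearly independent vectors in a $\l$-dimensional $GF(2)$-space is $\prod_{i=0}^{k-1}(2^\l-2^i)$, so setting $q_\l=\Pr(\cL_\l\mid\S_0)$, Lemma~\ref{X|d} yields the triangular moment system
\[
\sum_{\l\ge k}q_\l\prod_{i=0}^{k-1}(2^\l-2^i)\sim 1\qquad (k=0,1,2,\ldots).
\]

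Next I would establish tightness of the conditional distribution of $\dim V$ and invert. Since $\bX_K\ge\prod_{i=0}^{K-1}(2^K-2^i)$ on $\cL_\l$ with $\l\ge K$, Markov's inequality combined with Lemma~\ref{X|d} gives $\Pr(\dim V\ge K\mid\S_0)=O\brac{2^{-K^2}/\prod_{j=1}^K(1-2^{-j})}$, which is summable in $K$; thus any weak limit of the conditional law of $\dim V$ must satisfy the full moment system above. Introducing the number $N_k$ of $k$-dimensional subspaces of $V$, the identity $\bX_k=N_k\prod_{i=0}^{k-1}(2^k-2^i)$ on $\S_0$ gives $\E(N_k\mid\S_0)\sim 2^{-k^2}/\prod_{j=1}^k(1-2^{-j})$, and the standard $q=2$ Möbius inversion on the subspace lattice then recovers
\[
q_m\sim\sum_{k\ge m}(-1)^{k-m}\,2^{\binom{k-m}{2}}\qchoose{k}{m}{2}\E(N_k\mid\S_0).
\]
The remaining task, and the main obstacle, is to identify this sum with $\pi(m)$ in \eqref{ldef}; equivalently, to verify directly the classical $q$-series identity $\sum_{\l\ge k}\pi(\l)\prod_{i=0}^{k-1}(2^\l-2^i)=1$ characterising the Cohen--Lenstra-type measure on $GF(2)$ null spaces used in \cite{KLS}. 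Tightness ensures this pins down $q_\l\to\pi(\l)$, and multiplying by $\Pr(\S_0)\sim e^{-\f}$ from Lemma~\ref{smallsmall} gives $p(0,\l)\sim\pi(\l)\,e^{-\f}=P(0,\l)$, as required.
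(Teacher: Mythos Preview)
Your proposal is correct and follows essentially the same route as the paper. Both arguments (i) condition $\E(\bX_k\mid\S_0)$ on $\cL_\l$ and use the gap property to identify $\E(\bX_k\mid\S_0,\cL_\l)$ with the count $\prod_{i=0}^{k-1}(2^\l-2^i)$ of ordered independent $k$-tuples in a $\l$-dimensional $GF(2)$-space, (ii) invert the resulting triangular system via $q=2$ Gaussian-binomial/M\"obius identities to obtain $\pi(\l)$, and (iii) control the tail in $\l$ by a Markov-type bound. One small imprecision: your claim that $\bX_K\ge\prod_{i=0}^{K-1}(2^K-2^i)$ holds deterministically on $\S_0\cap\cL_\l$ with $\l\ge K$ is not quite right, since ``simple'' requires every nonzero XOR combination to lie in $J_1$; you need to intersect with the high-probability event $\neg\cH$ (no intermediate-size dependencies) exactly as the paper does, after which your Markov argument goes through and is equivalent to the paper's tail bound \eqref{qeps}.
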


\begin{proof}
For $0\leq k=O(1)$, we have from Lemma \ref{X|d} that
\beq{42.5}{
1\sim \E(\bX_k\mid \S_0)=\sum_{\l\ge k}
 \E(\bX_k\mid \S_0\wedge \cL_{\l})\times \frac{p(0,\l)}{p(0)}.
}
Let $\cH$ be the event that there exists a set of dependent rows $H$ where $\om\leq |H|\notin J_1$.
Then we have
\begin{align}
\E(\bX_k\mid \S_0\wedge \cL_{\l})&=\E(\bX_k\mid \S_{0},\wedge \cL_{\l}\wedge\neg\cH)\Pr(\neg\cH)+ \E(\bX_k\mid \S_0\wedge \cL_{\l}\wedge\cH)\Pr(\cH)\nn\\
&\sim\prod_{i=0}^{k-1}(2^{\l}-2^{i}).\label{42.6}
\end{align}
{\bf Justification for \eqref{42.6}:}
Given $\S_0\wedge \cL_{\l}\wedge \neg\cH$ there are $2^{\l}$ vectors in $V$. Choosing $i$ members of a simple sequence generates a subspace of dimension $i$, and we eliminate $2^{i}$ vectors from consideration as the next member of the sequence. Given $\neg\cH$  the number of simple sequences is given by the RHS of \eqref{42.6}.
Equation \eqref{42.6} then follows from $\Pr(\cH)=o(1)$.

It follows from \eqref{42.6} that for $\l\geq 0$,
\beq{42.7}{
1\sim \sum_{\l=k}^\infty \frac{p(0,\l)}{p(0)}\prod_{i=0}^{k-1}(2^{\l}-2^i).
}
The asymptotic solution of \eqref{42.7}  is given by the following lemma.
\begin{lemma}
For $\l \ge 0$, the solutions  to
\begin{align}
1&=\sum_{\l=k}^\infty q_{\l}\prod_{i=0}^{k-1}(2^{\l}-2^{i}),\qquad k\geq 0.\label{qd}
\end{align}
are given by $q_\l=\pi(\l)$ of \eqref{ldef}.
\end{lemma}
\begin{proof}
Gaussian coefficients are defined as
\beq{shit}{
\genfrac{[}{]}{0pt}{}{\l}{k}_z =
\frac{\prod_{i=1}^k (z^{\l-i+1}-1)}{\prod_{i=1}^k (z^{i}-1)}.
}
Using \eqref{shit} with $z=2$, equation \eqref{qd} can be rewritten as
\begin{align}\label{dshit}
1
&= 2^{\binom{k}{2}} \prod_{i=1}^{k}(2^{i}-1)\; \sum_{\l=k}^\infty q_{\l}
\genfrac{[}{]}{0pt}{}{\l}{k}_2.
\end{align}
Put $\psi_{k}=1/\brac{2^{\binom{k}{2}}\prod_{i=1}^{k}(2^{i}-1)}$, we see that $q_{\l}$ is the solution to
\beq{qd1}{
\sum_{\l=k}^\infty \genfrac{[}{]}{0pt}{}{\l}{k}_2q_{\l}=\psi_{k},\qquad k\geq 0.
}
Fix $\d\geq 0$,  multiply equation $k\geq \d$ in \eqref{qd1} by $(-1)^{k-\d}2^{\binom{k-\d}{2}}\genfrac{[}{]}{0pt}{}{k}{\d}_2$, and  sum these equations over $k\geq \d$. This gives
\begin{align}
\sum_{k=\d}^\infty(-1)^{k-\d}2^{\binom{k-\d}{2}}
\genfrac{[}{]}{0pt}{}{k}{\d}_2\psi_{k}
& =\sum_{k=\d}^\infty\sum_{\l=k}^\infty (-1)^{k-\d}\genfrac{[}{]}{0pt}{}{k}{\d}_22^{\binom{k-\d}{2}} \genfrac{[}{]}{0pt}{}{\l}{k}_2q_{\l}\label{g0}\\
&=\sum_{k=\d}^\infty\sum_{\l=k}^\infty (-1)^{k-\d}\genfrac{[}{]}{0pt}{}{\l-\d}{k-\d}_22^{\binom{k-\d}{2}} \genfrac{[}{]}{0pt}{}{\l}{\d}_2q_{\l}\nn\\
&=\sum_{\l=\d}^\infty\genfrac{[}{]}{0pt}{}{\l}{\d}_2q_{\l} \sum_{k=\d}^\l (-1)^{k-\d}\genfrac{[}{]}{0pt}{}{\l-\d}{k-\d}_22^{\binom{k-\d}{2}} \label{g1}\\
&=q_{\d}.\label{g2}
\end{align}
{\bf Explanation: \eqref{g1} to \eqref{g2}:}
Gaussian coefficients satisfy the identity
\beq{Gausse}{
(1+x)(1+zx)\cdots(1+z^{r-1}x)=\sum_{\ell=0}^r\genfrac{[}{]}{0pt}{}{r}{\ell}_z z^{\binom{\ell}{2}}x^\ell.
}
To prove the last summation on the right hand side of \eqref{g1} is zero for
$\l > \d$, use  \eqref{Gausse} with $x=-1,z=2$, $\ell=k-\d$  and $r=\l-\d$.
This gives $\sum_{\ell=0}^{\l-\d}\genfrac{[}{]}{0pt}{}{\l-\d}{\ell}_2 2^{\binom{\ell}{2}}(-1)^\ell=0$ for $\l > \d$.

For $z<1$, taking the limit of \eqref{Gausse}  gives
\begin{equation}\label{iqbin}
\prod_{\ell=0}^{\infty}(1+z^\ell x) = \sum_{\ell=0}^{\infty}
\frac{z^{\ell \choose 2} x^\ell}{ \prod_{i=1}^\ell (1-z^i)} .
\end{equation}
Replacing $\d$ by $\l$ in equation \eqref{g0}, we see that the solution $q_\l$ to \eqref{qd} is
\begin{flalign}
q_{\l}&=\sum_{k=\l}^\infty\frac{(-1)^{k-\l}2^{\binom{k-\l}{2}-\binom{k}{2}}} {\prod_{i=0}^{\l-1}(2^{\l-i}-1)\prod_{i=\l}^{k-1}(2^{k-i}-1)} \nn\\
&=\frac{\bfrac{1}{2}^{\l^2}}{\prod_{i=1}^\l\brac{1-\bfrac{1}{2}^i}}
\sum_{\ell=0}^\infty \frac{ (-1)^\ell \bfrac{1}{2}^{\ell \choose 2}\bfrac{1}{2}^{(1+\l)\ell}}{\prod_{i=1}^\ell\brac{1-\bfrac{1}{2}^i}}\label{wot1}\\
&=\bfrac{1}{2}^{\l^2}\frac{\prod_{i=\l+1}^\infty\brac{1-\bfrac{1}{2}^i}}
{\prod_{i=1}^\l\brac{1-\bfrac{1}{2}^i}}= \pi(\l),\label{wot2}
\end{flalign}
where $\pi(\l)$ is given in  \eqref{ldef}. To get from \eqref{wot1} to \eqref{wot2}, use \eqref{iqbin} with $z=1/2$ and $x=(-1/2^{\l+1})$.
\end{proof}

The $p(0,\l)$ only satisfy \eqref{qd} asymptotically and so to prove the lemma, we show that for large $K$,
\beq{qeps}{
\sum_{\substack{\l\geq K\\\s\geq 0}}q_{\l}\leq \e,
}
 where $\e>0$ is arbitrarily small. Now,
\[
\prod_{i=0}^{k-1}(2^\l-2^i)=2^{k\l}\prod_{i=0}^{k-1}\brac{1-\frac{1}{2^{\l-i}}}\geq 2^{k\l}\brac{ 1-\sum_{i=0}^{k-1}\frac{1}{2^{\l-i}}}\geq 2^{(k-1)\l}.
\]
It follows that
\[
\sum_{\substack{\l\geq K\\\s\geq 0}}q_{\l}\leq 2^{-K(K-1)}.
\]
Thus \eqref{qeps} holds if $K\geq \sqrt{2\log_21/\e}$.
\end{proof}

\subsection{$P_n(1,d)$: the case of one small fundamental dependency.}
Introduce the notation $P_n([m,d])$ for the probability that $M \in \ul M(n)$ has exactly $m$ small fundamental dependencies and the maximum number of large simple dependencies is $d$. Thus there are small dependencies  $D_1,...,D_m$  and (not necessarily unique) large dependencies $B_1,...,B_d$ corresponding to $M$ having a null space of dimension $m+d$. In the case $m=0$, it follows from \eqref{S0Lk} that $P_n(0,d)\sim \pi(d)\;e^{-\f}$.

Before considering $P_n(m,d)$, we explain the basic principle by deriving $P_n(1,d)$. The general case will follow from the recursive application of this.

Let $M \in \ul M([n])$ and let $L$ be a fixed set of rows, $|L|=\ell$. We write
\[
M=\brac{\begin{array}{cc}
S_L&R\\
C&M'
\end{array}}.
\]
Here
 $S_L$ is $\ell \times \ell$, $R$ is $\ell \times (n-\ell)$, $C$ is $(n-\ell) \times \ell$
and $M'$ has rows and columns indexed by $[n]-L$.

The event $R=\ul 0$, is dependent only on the columns of $[n]-L$ in $M$.
Provided $\ell=o(n^{1/2})$, $R=\ul 0$ has probability
\[
\Pr(R=\ul 0)=\brac{1-\frac{\ell}{n}}^{2(n-\ell)}\sim e^{-2 \ell}.
\]
Given $R=\ul 0$, $M'$ is a uar element of $\ul M([n]-L)$. This follows directly from the fact that $M$ is a uar element of $\ul M([n])$. Each column of $M$ has 2 random entries, and these are not in the rows of $L$. At this point
\begin{equation}\label{GoodM}
M=\brac{\begin{array}{cc}
S_L&0\\
C&M'
\end{array}}.
\end{equation}
The event $\cD_L$ that within the columns of $L$ the sub-matrix $S_L$ is the vertex-edge incidence matrix of a connected random mapping $D_L$ is independent of what happens in the columns of $[n]-L$ in $M$.  Each column of the sub-matrix $C$  has one uar entry, is (see Section 2) and we have
 \[
 P(\cD_L)\sim
  \bfrac{2\ell}{n}^\ell \cdot\frac{(\ell-1)!}{\ell^\ell} \sigma_\ell .
 \]
The probability { $P_{n-\ell}(0,k)\sim e^{-\f}\pi(k)$} that $M'$ has no small dependencies and $k$ large ones  is given by \eqref{S0Lk} above.
Let $P^*(j,k;1)$ be the probability that exactly $j$ of the $k$ large dependencies of $M'$ remain as dependencies after adding back the sub-matrix $C$.  To maintain continuity of exposition, the analysis of this event is deferred until Section \ref{goback}. Equation \eqref{JKM}  of Section \ref{goback} with $m=1$, gives
\[
P^*(j,j;1)= \bfrac{1}{2}^j\text{ and }P^*(j,j+1;1)=1- \bfrac{1}{2}^{j+1}.
\]
Let
\[
P_n(1,j,L)=
\Pr(M \text{ has 1 small  fundamental dependency $L$ and $j$ large dependencies}),
\]
Thus using \eqref{S0Lk}, and the above
\beq{PnL}{
P_n(1,j,L)\sim
\bfrac{2}{n}^\ell (\ell-1)! \sigma_\ell \cdot e^{-2 \ell}\cdot e^{-\f} \;
\brac{\pi(j)\bfrac{1}{2}^j+ \pi(j+1)\brac{1- \bfrac{1}{2}^{j+1}}}.
}
The probability that $L$ is dependent, but $R \ne \ul 0$ is $O(\ell^2/n)$.
The events that $L$ is the unique fundamental dependency are exclusive and exhaustive, so $\Pr([1,j])$ is  the sum of these. Thus, summing \eqref{PnL} over $L$ for $L\neq \emptyset$ gives
\begin{align*}
P_n(1,j)\sim
\f\, e^{-\f}\cdot \brac{\pi(j)\bfrac{1}{2}^j+ \pi(j+1)\brac{1- \bfrac{1}{2}^{j+1}}}.
\end{align*}

\subsection{The general case of $\nul(M)=d$, with $m$ small fundamental dependencies}

The matrix $M'$ in \eqref{GoodM} is a uar element of $\ul M([n]-L)$, and we can repeat the above construction with $M'$ instead of $M$. We remove a set of columns $L'$ and conditional on $R'=\ul 0$, the sub-matrix $M''$ is a uar element of
$\ul M([n]-L-L')$.  In this way we can obtain the probability $\Pr([2,j])$ of two small and $j$ large dependencies, and so on.

To systematize this, let $M_0=M, L_0=L, R_0=R, n_0=n, \ell_0=|L|$ and let $M_1=M', n_1=n_0-\ell_0$. Thus, $M_0$ is a uar element of $\ul M(n_0)$ and with some relabelling of $[n]-L$, $M_1$ is a uar element of $\ul M(n_1)$, etc.

In this way, we remove a sequence $(L_0,L_1,...,L_{m-1})$ of column sets, of total size at most $m \om$. As $n-m \om \sim n$, equation \eqref{S0Lk} holds in $\ul M(n_{m})$ with the same asymptotic probability. Taking the subspace $\ul M([0,k], n_{m})$ of $M(n_{m})$, we work back to the subspace of $\ul M$ with
small fundamental dependencies $L_0,...,L_m$ and $j \le k$ large dependencies, and thus to $P_n(m,j])$, the probability of $\ul M([m,j],n)$.

Summarizing, we have
\begin{flalign}
\Pr(R(L_j)=\ul 0, \; j=0,...,m-1) &\sim \prod_{j=0}^{m-1} e^{-2 \ell_j}, \label{Pox1}
\\
P(\cD_{L_j}, j=0,...,m-1) &\sim \prod_{j=0}^{m-1}
 \bfrac{2\ell_j}{n_j}^{\ell_j} \cdot\frac{(\ell_j-1)!}{\ell_j^{\ell_j} } \sigma_{\ell_j},\label{Pox2}\\
P^*(j,j+r;m)&\sim  \qchoose{m}{r}{2}\bfrac{1}{2}^{(j+r)(m-r)} \prod_{j=h+1}^{h+r} \brac{1-\bfrac{1}{2}^j},
\label{Pox3}\\
{ P_{n-\ell}(0,k)} & \sim e^{-\f}\;\pi(k). \label{pef}
\end{flalign}
The last line is \eqref{S0Lk}. For continuity of exposition, the proof of \eqref{Pox3} is deferred until Theorem \ref{jofk} in Section \ref{goback} below.

The dependency of probability  in \eqref{Pox3} on the sizes $\ell_j \le \om, j=0,...,m-1$,
is hidden in the $(1+o(1))$ term in the asymptotic notation. We multiply \eqref{Pox1} by \eqref{Pox2}, and sum over all distinct sets of removed columns $(L_0,...,L_{m-1})$, and noting that each entry is repeated $m$ times in such sequences, we obtain a quantity $\Psi(m)$ given  by
\begin{flalign*}
\Psi(m) & \sim  \frac{1}{m!}\sum_{\ell \ge 1} \sum_{\ell=\ell_0+\cdots+\ell_{m-1}}\;
{n \choose \ell_0,\ldots,\ell_{m-1}}\;\prod_{j=0}^{m-1}
\brac{\Pr(R(L_j)=\ul 0)\cdot P(\cD_{L_j})}\\
&\sim \frac{1}{m!}\sum_{\ell \ge 1} \sum_{\ell=\ell_0+\cdots+\ell_{m-1}}
\prod_{j=0}^m (2e^{-2})^{\ell_j} \frac{1}{\ell_j} \s_{\ell_j} \\
&= \frac{\f^m}{m!}.
\end{flalign*}
Thus, multiplying $\Psi(m)$ by \eqref{Pox3} and \eqref{pef}, and summing over $k \ge j$ large dependencies,
\begin{flalign}
P_n(m,j) &\sim \frac{\f^m}{m!}\, e^{-\f}\;\sum_{r=0}^m \pi(j+r) P^*(j,j+r;m)  \nonumber\\
&\sim \frac{\f^m}{m!}\,e^{-\f} \;
\sum_{r=0}^m \pi(j+r) \qchoose{m}{r}{2}\bfrac{1}{2}^{(j+r)(m-r)} \prod_{j=h+1}^{h+r} \brac{1-\bfrac{1}{2}^j}.
\label{OKorNOT}
\end{flalign}
Finally, the probability that $\nul(M)=d$ is
\[
\Pr(\nul(M)=d)=\sum_{m=0}^d P_n(m,d-m),
\]
which  completes the proof of Theorem \ref{TH1}.

\subsection{Going back from $M'$ to $M$. Change in dimension of null space.}\label{goback}
Write $M=\brac{\begin{array}{cc}
S_L&0\\
C&M'
\end{array}}$ as given in \eqref{GoodM}.
In this section we prove the following theorem.
\begin{theorem}\label{jofk}
Suppose that the $(n-L) \times (n-L)$ sub-matrix $M'$ of $M$
has no small dependencies, and $k$ large simple dependencies,
and the $L \times L$ sub-matrix $S_L$ of $M$
has $m$ small fundamental dependencies of total size $L$.
For $k=h+r$, where $0 \le r \le m$,
 the probability 
the maximum number of large simple dependencies in $M$ is $h$, is asymptotic to
\begin{equation}\label{JKM}
P^*(h,h+r;m)= \qchoose{m}{r}{2} \bfrac{1}{2}^{(h+r)(m-r)} \prod_{j=h+1}^{h+r} \brac{1-\bfrac{1}{2}^j}.
\end{equation}
\end{theorem}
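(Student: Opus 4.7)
The plan is to reduce the statement to the classical distribution of the rank of a uniformly random $k\times m$ matrix over $GF(2)$. Partition any row-dependency $\ul x$ of $M$ as $(\ul u,\ul v)$ with $\ul u$ indexing $L$ and $\ul v$ indexing $[n]\setminus L$. Because the upper-right block is $\bo$, the identity $\ul x M=\bo$ is equivalent to $\ul v M'=\bo$ together with $\ul u S_L=\ul v C$, so $\ul v$ lies in the left null space $N(M')$ (of dimension $k$ by hypothesis) and a compatible $\ul u$ exists iff $\ul v C$ lies in the row space of $S_L$. Dependencies with $\ul v=\bo$ form the $m$-dimensional subspace inherited from the fundamental structure of $S_L$, while every other dependency has $|\ul x|\sim n/2$ and is therefore large. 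Hence
\[
h=\dim\{\,\ul v\in N(M'):\ul v C\in R(S_L)\,\},
\]
where $R(S_L)$ denotes the row space of $S_L$.

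To turn this into a matrix-rank computation, fix a basis $\ul z_1,\ldots,\ul z_m$ of the right null space of $S_L$, so that $\ul w\in R(S_L)$ iff $\ul w\cdot\ul z_j=0$ for every $j$, and choose a basis $\ul b_1,\ldots,\ul b_k$ of $N(M')$ consisting of the indicator vectors of the large simple dependencies $B_1,\ldots,B_k$ supplied by the hypothesis. Form the $k\times m$ matrix $A$ over $GF(2)$ by $A_{i,j}=\ul b_i^{\top}C\ul z_j$; then $h=k-\rank(A)$, so the target value is $r=\rank(A)$. Factor $A=C'Z$ where $Z$ is the $|L|\times m$ matrix whose columns are $\ul z_1,\ldots,\ul z_m$ (full column rank $m$) and $C'$ is the $k\times|L|$ matrix with entries $c_{i,t}=\sum_{a\in B_i}C_{a,t}$.

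The crux is to show that $C'$ is asymptotically a uniform random matrix on $GF(2)^{k\times|L|}$; since $Z$ has full column rank, right-multiplication by $Z$ then pushes this forward to the uniform distribution on $GF(2)^{k\times m}$ for $A$. The columns of $C$ are independent (distinct columns of $M$), so the columns of $C'$ are independent. Fix $t\in L$: conditioning on the prescribed fundamental-block structure of $S_L$ dictates the parity of how the random non-zeros of column $t$ of $M$ fall inside $L$, but the random non-zeros that land in $[n]\setminus L$ are asymptotically independent and uniformly distributed on $[n]\setminus L$. Consequently $\ul c_{\cdot,t}=(c_{1,t},\ldots,c_{k,t})$ is a mod-$2$ sum of indicator vectors $\chi(a)=(\mathbbm{1}\{a\in B_i\})_{i=1}^{k}$ evaluated at such uniform points $a$. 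Lemma \ref{Cor4} gives $|I_{\ul x}|\sim n/2^k$ for every $\ul x\in\{0,1\}^k$, so $\chi$ at a uniform point of $[n]\setminus L$ is $(1+o(1))$-uniform on $\{0,1\}^k$, and the mod-$2$ sum of independent $(1+o(1))$-uniform vectors on an abelian group remains $(1+o(1))$-uniform. Hence $\ul c_{\cdot,t}$ is $(1+o(1))$-uniform on $\{0,1\}^k$, and $C'$ is $(1+o(1))$-uniform on $GF(2)^{k\times|L|}$.

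With $A$ thus asymptotically uniform on $GF(2)^{k\times m}$, the classical count of rank-$r$ matrices over $GF(2)$ yields
\[
\Pr(\rank(A)=r)\sim\frac{\qchoose{m}{r}{2}\prod_{i=0}^{r-1}(2^k-2^i)}{2^{km}}=\qchoose{m}{r}{2}\bfrac{1}{2}^{k(m-r)}\prod_{j=k-r+1}^{k}\brac{1-\bfrac{1}{2}^j},
\]
and substituting $k=h+r$ delivers precisely \eqref{JKM}. The main obstacle is making the uniformity of $C'$ rigorous: beyond Lemma \ref{Cor4}, one has to verify that conditioning on $M'$ having exactly the prescribed structure (no small dependencies, $k$ large simple ones) and on $S_L$ being the disjoint union of the specified $m$ fundamental blocks does not bias the residual column randomness of $C$ by more than $o(1)$, so that the asserted independence and approximate uniformity of the $\ul c_{\cdot,t}$ genuinely hold uniformly over the $O(1)$-many tests involved. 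Once that is in place, the remainder of the argument is routine enumeration.
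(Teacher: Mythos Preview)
Your argument is correct and reaches exactly \eqref{JKM}, but the route differs from the paper's.

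The paper works combinatorially. It first proves (Lemma \ref{lemma1}(a)) that a large dependency $B_j$ of $M'$ extends across a small block $D_s$ iff the number of cycle vertices $i$ of $D_s$ with $e_2(i)\in B_j$ is even --- an explicit parity condition on the unique cycle of the connected functional digraph $D_s$. It then processes $D_1,\ldots,D_m$ sequentially: at each step the surviving dimension $k'$ either stays (probability $\sim 1/2^{k'}$) or drops by one, and the sum over $\{0,1\}^m$-sequences with exactly $r$ ones is evaluated via the inversion interpretation \eqref{inversions} of the Gaussian binomial.

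Your version absorbs all of this into linear algebra. The right null space of $S_L$ plays exactly the role the paper assigns to cycles --- indeed, because each $D_s$ is the incidence matrix of a connected functional digraph, the generators $\ul z_s$ of that null space are precisely the indicator vectors of the cycles $H_s$, so your entries $A_{i,s}=\ul b_i^{\top}C\ul z_s$ coincide with the paper's parities $\sum_{t\in H_s}\indi{e_2(t)\in B_i}$. The sequential inversion count is then replaced by the one-line classical formula for $\Pr(\rank A=r)$ with $A$ uniform on $GF(2)^{k\times m}$. You gain brevity and a cleaner structural picture; the paper gains an explicit combinatorial mechanism that makes the asymptotic independence transparent and a self-contained derivation of the rank law. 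The ``main obstacle'' you flag --- that conditioning on the structure of $M'$ and of $S_L$ does not bias $C$ --- is in fact mild here: columns of $M$ are independent, the conditioning on $M'$ touches only columns outside $L$, and the residual within-$L$ conditioning is exactly the $O(\om^3/n)$ step preceding \eqref{diag}.
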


Before proceeding with the proof of Theorem \ref{jofk}, we give an outline of the proof structure. Each  column of the sub-matrix $C$ has a unique random non-zero entry in the rows of $M'$. On average about $\ell/2$ of these non-zeros  fall in the rows of any large dependency $B$ of $M'$. To extend $B$  to a dependency $A$ of $M$, we may  need to include some   rows of $S_L$ in $A$ to cancel any non-zeros of $C$ which fall in the rows of $B$.

Thus in general $A \cap L \ne \es$, and some rows of $A$ have been deleted to give $B$. If $M'$ has $k$ large dependencies $B_1,...,B_k$, then any extension of these sets needs to preserve and extend the intersection structure $I'_\ux, \; x \in \{0,1\}^k$ in $M'$ to $M$. If $j \le k$ of the sets $B_i$ extend successfully then the final intersection structure will be given by $I_y, y\in \{0,1\}^j$. The interaction of this structure with $L$ is the one described in Section \ref{ESL} and summarized by \eqref{crispy}. The extensions are not unique. If $A$ is a large dependency, and $L$ is small, then $A \D L$ is large. It was exactly this problem which obliged us to construct our proofs in this way.

\subsubsection*{Proof of Theorem \ref{jofk}}
Suppose $M'$ has $k$  large dependencies $B_1,...,B_k$ but no small dependencies.  In this case there is
a well defined  vector space of dimension $k$ spanned by $B_1,...,B_k$. Assume the $m$ small dependencies $D_j, j=0,...,m-1$ occupy the first $L$ columns. The matrix $M$ can be written as follows.
\[
M=\left(\begin{array}{cccccc}
D_0&0&0&\cdots&0&0\\
C_{0,1}&D_1&0&\cdots&0&0\\
C_{0,2}&C_{1,2}&D_2&\cdots&0&0\\
\vdots&\vdots&\vdots&\ddots&0&0\\
C_{0,m-1}&C_{1,m-1}&C_{2,m-1}&\cdots&D_{m-1}&0\\
C_{0,m}&C_{1,m}&C_{2,m}&\cdots&C_{m-1,m}&M'
\end{array}
\right) .
\]

Let $|D_j|=\ell_j$ where $L=(\ell_0+\cdots+\ell_{m-1})$, and $n_j=n-(\ell_0+\cdots+\ell_{j-1})$.
Each $(n_j -\ell_j) \times \ell_j$ sub-matrix $C_j=(C_{j,j+1},...,C_{j,m})^\top$ has exactly one random one in each column.
The probability any of these ones fall in any $C_{j,i}$ where $j+1 \le i \le m-1$ for $j=0,...,m-1$ is $O(\om^3/n)$. Conditional on this not occurring, the non-zero entry in each column is u.a.r. in $n'=n-L$. Tidying up, and writing $C'_j=C_{j,m}$ we have
\beq{diag}{
M=\left(\begin{array}{cccccc}
D_0&0&0&\cdots&0&0\\
0&D_1&0&\cdots&0&0\\
\vdots&\vdots&\ddots&0&0\\
0&0&0&\cdots&D_{m-1}&0\\
C_{0}'&C_{1}'&C_{2}'&\cdots&C'_{m-1}&M'
\end{array}
\right)  \quad =\quad
\left(\begin{array}{cc}
D&0\\
C&M'\end{array}
\right).
}
Assuming the above structure for $M$, write $B_j\diamond D_s$, and say the rows $B_j$ agree with $D_s$, if there exists a set of row indices $J_{j,s}$, a subset of the row indices of $D_s$, such that the rows $B_j\cup J_{j,s}$ are zero sum in the columns  of $D_s$. 
Otherwise we say $B_j$ is inconsistent on $D_s$, as $B_j$ cannot be extended to a large dependency in $M$.


For $i \in D_s$, column $i$ has a unit entry in row $i$, and if the random unit entries are in rows $t,t'$,
we   use the notation $e_1(i)=t\in D_s,\, e_2(i)=t'\notin D_s$. Let $H_s$ be the set of column indices associated with the vertices of the cycle in $D_s$.

Given a maximal simple sequence $(B_1,...,B_k)$, let $\ul Q=\ul Q(k)$ be the $k$-dimensional vector space generated by the first $k$ rows of the $K \times K$ matrix $U$, the rows corresponding to $B_1,...,B_k$; see Section \ref{back}. For a given $D$ with cycle vertices $H$,  let $x_i^{(j)}=\indi{e_2(i) \in R(I_j),\; i \in H}$ be the indicator that $e_2(i)$ of vertex $i$ falls in the rows of the  index set $I_j,\; j=1,...,K=2^k-1$ obtained from the dissection of $(B_1,...,B_k)$ in $M'$. Let $p_j=\sum_{i \in H} x_i^{(j)}$ and $\ul p$ the $K$-vector of parities of the index set rows.

Let $T=\{\ul y:U\ul y=\ul 0\}$ be the set of parity vectors which agree with all of $B_1,...,B_k$, and $S=\{\ul w \in \ul Q: \ul w \cdot \ul p=0\}$ be the rows of $U$ which agree with a given parity vector $\ul p$.

Depending on $\ul p$, the dimension of $\ul Q$ is either reduced by zero or one by the small dependency $D$.  The set $D$ agrees with $(B_1,...,B_k)$ iff  $\ul p \in T$.

\begin{lemma}\label{lemma1}\label{lemsip}

\begin{enumerate}[(a)]
\item $B_j\diamond D$ if and only if $|\set{i\in H:\;e_2(i)\in B_j}|$ is even.

\item If $\ul p \in T$ then $S= \ul Q$ and  $(B_1,...,B_k)$ agree with $D$.\\
If $\ul p \notin T$ then $|S|= |\ul Q|/2$, and there is a basis of $S$ of dimension $k-1$ corresponding to a maximal simple sequence $(B_1',...,B_{k-1}')$ which agrees with $D$.
%
\item Let $\cB_j=B_j \diamond D$. Suppose that $Y\subseteq [j]$ is arbitrary. Then
    $$\Pr(\cB_{j+1}\mid \cB_i,i\in Y,\neg\cB_i,i\notin Y)\sim \Pr(\cB_{j+1})\sim 1/2.$$
Thus the occurrence of $\cB_{j+1}$ is asymptotically independent of the occurrence or non-occurrence of the events $\cB_1,\cB_2,\ldots,\cB_j$.
It follows that $\Pr( \ul p \in T)=\Pr(\cB_1\cdots \cB_k) \sim 1/{2^k}$.
\end{enumerate}
\end{lemma}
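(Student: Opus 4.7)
I would prove the three parts of Lemma~\ref{lemsip} in order, exploiting the structural description of the small dependency $D$ as a connected functional digraph with unique cycle $H$.

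For part~(a), I would model the extension problem for $B_j$ as a linear system over $GF(2)$. Writing $A = B_j \cup J$ with $J \subseteq D$ and $x_r = \mathbbm{1}\{r \in J\}$, the zero-sum condition for $A$ on column $i \in D$ reduces to
\[
x_i + x_{e_1(i)} \equiv \mathbbm{1}\{e_2(i) \in B_j\} \pmod{2},
\]
since column $i$ of $M$ has non-zero entries exactly at rows $i$, $e_1(i) \in D$, and $e_2(i) \notin D$, while the row-block $R$ in \eqref{GoodM} is zero. On the tree portion of $D$ this system is uniquely solvable by propagation from the leaves inward; the only obstruction lies on $H$, where summing the equations around the cycle cancels every variable on the left and leaves the consistency condition $\sum_{i \in H}\mathbbm{1}\{e_2(i) \in B_j\} \equiv 0 \pmod 2$. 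This proves both directions of the claim.

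For part~(b), I would translate~(a) into the vector-space formulation of Section~\ref{back}. Each $\ul w \in \ul Q$ encodes $B(\ux) = \bigoplus_{i:x_i=1}B_i = \bigcup_{\uy:\, U(\ux,\uy)=1} I_\uy$, so applying~(a) to $B(\ux)$ shows that this combination extends to $M$ iff $\sum_\uy U(\ux,\uy)p_\uy = \ul w \cdot \ul p \equiv 0 \pmod 2$. Hence $S$ is the kernel of the $GF(2)$-linear functional $\ul w \mapsto \ul w \cdot \ul p$ on $\ul Q$; this functional vanishes identically iff $\ul r_i \cdot \ul p = 0$ for every basis row $\ul r_i$, iff $U\ul p = \ul 0$ mod $2$, iff $\ul p \in T$. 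In the remaining case $\ul p \notin T$ the functional is nontrivial, so $\dim S = k-1$ and $|S| = |\ul Q|/2$. A basis of $S$ automatically forms a simple sequence, since every nonzero $GF(2)$-combination of its elements already lies in $\ul Q$ and hence has size in $J_1$ by the assumed simplicity of $(B_1,\dots,B_k)$.

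For part~(c), I would exploit that conditional on $D$ and on the partition $(I_\ux)$, the row indices $\{e_2(i):i\in H\}$ are i.i.d.\ uniform on $[n]\setminus D$. Consequently the vectors $\ul v_i = (\mathbbm{1}\{e_2(i)\in B_j\})_{j=1}^k \in \{0,1\}^k$ are i.i.d., with marginal
\[
\Pr(\ul v_i = \ux) = \frac{|I_\ux \setminus D|}{n-|D|} = \frac{1}{2^k}\brac{1 + O\brac{4^k\sqrt{\tfrac{\log n}{n}}}}
\]
for every $\ux \in \{0,1\}^k$, by Lemma~\ref{Cor4}. The parity vector $(\cB_1,\dots,\cB_k)$ is an affine image of $\sum_{i\in H}\ul v_i$ in $\mathbb{F}_2^k$, and a convolution of near-uniform distributions on the finite abelian group $\mathbb{F}_2^k$ remains near-uniform (with total-variation error scaling at worst linearly in $|H|\le\omega$). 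Thus the joint law of $(\cB_1,\dots,\cB_k)$ is $o(1)$-close in TV to the uniform distribution on $\{0,1\}^k$, which yields simultaneously $\Pr(\cB_j)\sim 1/2$ and the asymptotic mutual independence, proving the conditional-probability statement and the final assertion $\Pr(\ul p\in T)=\Pr(\cB_1\cdots\cB_k)\sim 1/2^k$.

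The main obstacle is part~(c): the events $\cB_1,\dots,\cB_k$ are all measurable with respect to the very small random set $\{e_2(i):i\in H\}$, so their near-independence is a quantitative claim rather than a structural one. The argument rests on two inputs: the $o(1)$-uniformity of each $\ul v_i$ on $\mathbb{F}_2^k$, which comes from the sharp estimate for $|I_\ux|$ in Lemma~\ref{Cor4}, and the slow growth $|H|\le\omega(n)$, which keeps the accumulated convolution error $o(1)$ despite summing possibly many near-uniform draws.
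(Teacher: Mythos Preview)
Your argument is correct and tracks the paper's proof closely. For (a) you set up the same $GF(2)$ system $x_i+x_{e_1(i)}=\mathbbm{1}\{e_2(i)\in B_j\}$ that the paper writes as $y_i+y_{i+1}=x_i$; the paper solves on the cycle first and then propagates outward to the trees, whereas you eliminate tree variables first and reduce to the cycle, but the content is identical. For (b) both arguments identify $S$ as the kernel of the linear functional $\ul w\mapsto \ul w\cdot\ul p$ on $\ul Q$.

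For (c) there is a mild difference in packaging. The paper argues inductively: conditioning on $\cB_i,i\in Y,\neg\cB_i,i\notin Y$ only fixes which cell of the $2^j$-partition each $e_2(i)$ lands in, and $B_{j+1}$ then bisects every cell up to the error from Lemma~\ref{Cor4}, giving $\Pr(\cB_{j+1}\mid\cdot)\sim 1/2$. You instead observe directly that the membership vectors $\ul v_i\in\mathbb{F}_2^k$ are i.i.d.\ and $O(4^k\sqrt{\log n/n})$-close to uniform by Lemma~\ref{Cor4}, so their $GF(2)$-sum is near-uniform on $\mathbb{F}_2^k$; this yields the full joint statement in one stroke and in fact the convolution does not even accumulate error (uniform is absorbing under convolution on a group), so your caution about linear growth in $|H|$ is unnecessary. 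One small slip: the $e_2(i)$ are uniform on the rows of $M'$, i.e.\ on $[n]\setminus L$ rather than $[n]\setminus D$, but since $I_\ux\subseteq[n]\setminus L$ this does not affect your probability computation.
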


\begin{proof}
(a) Suppose the vertices of the cycle of $D=D_s$ are labelled $1,...,\ell$, with edges $(1,2),...,(\ell-1,\ell),(\ell,1)$. Let $(i,i+1)$ be such an edge, where $i,i+1 \in D_s$ and thus $i+1=e_1(i)$. Then let $x_i=1$ if $e_2(i)\in B_j$. We introduce variables $y_i,z_i,i=1,2,\ldots,\ell$, which will be used to define the index set of rows $J_{j,s}$, if this is possible. We interpret $y_i=1$ to mean $i\in J_{r,s}$ and $z_i=1$ to mean that $e_1(i)\in J_{r,s}$. For $B_j\cup J_{j,s}$ to be a dependency we need  $x_i+y_i+z_i=0$ for $i=1,2,\ldots,\ell$. For consistency we need $y_{i+1}=z_{i}$ for $i=1,2,\ldots,\ell$ where $y_{\ell+1}=y_1$. This leads to the equations $y_i+y_{i+1}=x_i,i=1,2,\ldots,\ell$.
These equations are feasible if and only if
\beq{x1x2}{
x_1+x_2+\cdots+x_\ell=0.
}
If \eqref{x1x2}  holds there are exactly two possible choices for the $y_i$. Choosing  an arbitrary value in $\{0,1\}$ for $y_1$, determines $y_i, i=2,...,\ell$ and thus $J_{j,s}=\set{i:y_i=1}$.

\begin{figure}[H]
\begin{center}
\begin{tikzpicture}[scale=2.5]
\draw [thick, dashed] (0.1,0)--(0.9,0) (2.1,0)--(2.9,0);
\draw [thick, dashed] (0.1,1)--(0.9,1) (2.1,1)--(2.9,1);
\draw  (1,1) circle (0.08); \draw  (2,1) circle (0.08);
\draw  (1,0) circle (0.08); \draw  (2,0) circle (0.08);
\draw  (0,0) circle (0.08); \draw  (3,0) circle (0.08);
\draw (0,1) circle (0.08); \draw (3,1) circle (0.08);
\draw [thick,->] (1,0.1)--(1,0.9);
 \draw [thick,->]  (1.1,1)--(1.9,1);\draw [thick,->]  (2,0.9)--(2,0.1); \draw [thick,->]  (1.9,0)--(1.1,0);

\node [below] at (0,-0.1) {$x_1$};
\node [below] at (0.9,-0.1) {$y_1=z_4$};
\node [below] at (2.1,-0.1) {$y_4=z_3$};
\node [below] at (3,-0.1) {$x_4$};

\node [above] at (0,1.1) {$x_2$};
\node [above] at (0.9,1.1) {$y_2=z_1$};
\node [above] at (2.1,1.1) {$y_3=z_2$};
\node [above] at (3,1.1) {$x_3$};
\node at (1,0) {$\scriptstyle 1$};
\node at (1,1) {$\scriptstyle 2$};
\node at (2,1) {$\scriptstyle 3$};
\node at (2,0) {$\scriptstyle 4$};
\end{tikzpicture}
\caption{\;\;Example: Cycle $(1,2,3,4)$ with labelling. The edges $(i,e_1(i))$ are drawn solid, and edges $(i,e_2(i))$ dashed. } \label{cyclefig}
\end{center}
\end{figure}
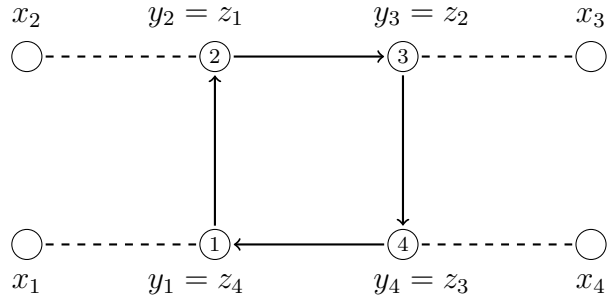

We deal with the attached trees by working backwards from the cycle to the leaves. Suppose that vertex $i$ is not in the cycle and that the values $x_j,y_j,z_j$ have been determined for its parent $j$ its tree. We are forced to take $z_i=y_j$ and then $y_i$ is determined from $x_i+y_i+z_i=0$. Each time we find that $y_i=1$, we add $i$ to $J_{r,s}$.

(b)Let $\ul z_1,\ul z_2 \in \ul Q\sm S$ then $(\ul z_1+\ul z_2) \cdot \ul p=0$ so
$\ul z_1+S=\ul z_2+S$; the subgroup $S$ has only one non-trivial coset in the group $\ul Q$. Thus $|S|=|\ul Q|/2$, the dimension of $S$ is $k'=k-1$, and some $k'$ rows of  $S$ form a
basis for the reduced matrix $U'=S$.

(c) Let ${\cal P}=(P_0,P_1,P_2,\ldots,P_{2^j-1})$ be the partition of $[n-L]$ induced by $B_1,B_2,\ldots,B_j$.
Each part of the partition contains $\sim n/2^j$ rows. The occurrence of $\cB_i,i\in Y,\neg\cB_i,i\notin Y$ is determined by the the allocation of the $e_2(i)$ into each part. As such, if $e_2(i)$ lies in some part $P_t,\,1\leq t\leq 2^j$, then it is distributed uniformly over $P_t$.
Each part of ${\cal P}$ corresponds to an index $\ul w \in \{0,1\}^j$. The introduction of $B_{j+1}$ splits each $P_t$ into two  parts with index sets $(\ul w,0),\;(\ul w,1)$ of asymptotically equal size.
If $e_2(i)$ lies in $P_t$, in one ``half'' we will have $x_i=0$ and in the other ``half'' we will have $x_i=1$, where $x_i$ is computed with respect to $B_{j+1}$.    It follows that \eqref{x1x2} holds with probability $\sim1/2$.

\end{proof}

\ignore{
(b) This follows from the fact that $|B_i|\sim n/2$ and $|L|=o(n^{1/2})$ and $e_2(i)$ is chosen randomly from a set of size $n-\om$. Furthermore, the values $e_2(i),i\in D_{s_1}$ are independent of the choices
$e_2(i),i\in D_{s_2}$ if $s_1\neq s_2$.

(c) Let ${\cal P}=(P_1,P_2,\ldots,P_{2^j})$ be the partition of $[n-L]$ induced by $B_1,B_2,\ldots,B_j$ as described in Section \ref{ESL}. Each part of the partition contains $\sim n/2^j$ rows. The occurrence of $\cB_i,i\in Y,\neg\cB_i,i\notin Y$ is determined by the the allocation of the $e_2(i)$ into each part. As such, if $e_2(i)$ lies in some part $P_t,\,1\leq t\leq 2^j$ then it is distributed uniformly over $P_t$. Each part of ${\cal P}$ corresponds to an index $\ux \in \{0,1\}^j$ and is split into two  parts with indices $(\ux,0),\;(\ux,1)$ by $B_{j+1}$. These parts are  of asymptotically equal size. In one ``half'' we will have $x_i=0$ and in the other ``half'' we will have $x_i=1$, where $x_i$ is computed with respect to $B_{r+1}$.    It follows that \eqref{x1x2} holds with probability $\sim1/2$.
\end{proof}
}

\subsubsection*{Proof of Theorem \ref{jofk}}

For convenience, let $k=h+r$, then by Lemma \ref{lemsip}(b), $0 \le r \le m$. For a small dependency $D_i$, $i=1,...,m$, let $s_i=1$ if at least one of the remaining $1 \le k' \le k$ simple large dependencies $(B'_1,...,B'_k)$ is inconsistent on $D_i$, thus reducing $k'$ to $k'-1$; and $s_i=0$ otherwise. By Lemma \ref{lemsip}(c),
$\Pr(s_i=0) \sim 1/2^{k'}$.

Let $S_r=\{s \in \{0,1\}^m: \sum_{i=1}^m s_i=r\}$ be those sequences $s$ with $r$ unit entries. Let $d(s)=(d_0,d_1,...,d_r)$ where $d_j$ is the number of zeroes between the $j$--th and $j+1$--th unit entry of $s$; and thus $\sum d_j=m-r$.
The probability of a given sequence $s$ is asymptotic to $\r_r(s)$ where
\begin{align*}
\r_r(s) =& \;\prod_{j=0}^r \bfrac{1}{2^{k-j}}^{d_j}\; \prod_{j=0}^{r-1} \brac{1-\frac{1}{2^{k-j}}} \\
=& \;2^{\sum_{j=0}^r j d_j} \cdot \bfrac{1}{2^k}^{(m-r)} \prod_{j=0}^{r-1} \brac{1-\frac{1}{2^{k-j}}}
.
\end{align*}
To obtain $P^*(h,h+r;m)$, we need to sum $\r_r(s)$ over $s \in S_r$.

The polynomial ${\scriptstyle \qchoose{m}{r}{q}}=\frac{(q^m-1)...(q^{m-r+1})}{(q^r-1)...(q-1)}$,  is the number of $r$-dimensional subspaces of $m$-dimensional space over $GF(q)$, and thus enumerates
the number of $r\times m$ matrices over $GF(q)$ with no zero rows which are in reduced echelon form. As a consequence of this, it is also the generating function for the total number  of {\em inversions}, $i(s)$, in  sequences $s \in S_r$, (see \cite{Sach}, Chapter 3.4.5). An inversion in a $0-1$ sequence $s$, is a pair $(1,0)$ contained in $s$, and  $i(s)=\sum_{j=0}^r j d_j$.
 Thus,
\begin{equation}\label{inversions}
\qchoose{m}{r}{q}
= \sum_{s \in S_r \atop d(s)=(d_0,d_1,...,d_r)}
q^{\sum_{j=0}^r j d_j}.
\end{equation}

Using  \eqref{inversions} with $q=2$, we obtain $P^*(h,h+r;m)$ as in \eqref{JKM} from,
\[
\sum_{s \in S_r} \r_r(s)= \qchoose{m}{r}{2}\;\bfrac{1}{2}^{(h+r)(m-r)} \prod_{j=h+1}^{h+r} \brac{1-\bfrac{1}{2}^j} .
\]

\section{Further comments: Rank over $GF(t)$, and $GF(2)$ for $r \ge 2, s=2,3$: Proof of Theorem \ref{th3} }

\subsection{Rank over $GF(2)$ for $r \ge 2, s=2,3$ }
\paragraph{Case $r=2,s=2$.}
An $n \times 2n$ matrix of this type has even column sum and row rank $n^*=n-1$ w.h.p.

Borrowing from \cite{FK}  Theorem 16.5, for $r=1$, the expected number of fundamental zero-sum sets of size  $\ell$ is
\[
\E X_\ell=
{n \choose \ell} \bfrac{\ell-1}{n-1}^{\ell} \bfrac{n-1-\ell}{n-1}^{n-\ell} \cdot \frac{1}{(\ell-1)^{\ell}}\sum_{k=2}^\ell (k-1)! k \ell^{\ell-k-1}\sim  e^{-\ell} \frac 1\ell \sum_{j=0}^{\ell-2} \frac{\ell^j}{(\ell)_j}.
\]
As the last sum tends to $e^{\ell}/2$ we have $\E X_\ell \le 1/\ell$.
If $L$ is zero-sum, so is $[n]-L$. For $r=2$ the total expected number of $\ell$-dependencies, $2 \le \ell \le n-2$ is at most
\[
4 \sum_{\ell=2}^{n/2} \E X_\ell \bfrac{\ell-1}{n-1}^{\ell}
\sim 4 \sum_{\ell=2}^{n/2} \bfrac{\ell-1}{n}^{\ell} \frac{1}{\ell}=O\bfrac{1}{n^2}.
\]
\paragraph{Case $r=2,s=3$.} It follows from the proofs  that an $n \times 2n$ matrix of this type has full row rank w.h.p., as the 'second matrix' cancels the constant number of dependencies in the first (if any).

\subsection{ Rank over $GF(t)$, $t>2$: Proof of Theorem \ref{TH2}}
The proof of Theorem \ref{TH2} is greatly simplified by
the w.h.p. lack of large dependencies.

\paragraph{Case I: The sum of all rows.}
Let $W(M)$ be an indicator that $\sum_{i=1}^n \ul r_i= \ul 0$, (i.e., that the rows of $M$ sum to zero). Then with arithmetic over $GF(t)$,
\[
\E W=\left\{ \begin{array}{ll}
\brac{\sum_i f_i f_{t-1-i}}^n & \text{ Model 1,\;2}\\
\brac{\sum_{i+j+k=0} f_i f_{j} f_k}^n & \text{ Model 3}
\end{array}\right. .
\]
Thus unless $t=3$ and $f_1=1$ (Model 1), $\E W \ra 0$ as $n \rai$.

\paragraph{Case II: The sum of $\ell$ rows.}
Let $L$ be a set of row indices of size $\ell$. For a given column $i$ where  $i \in L$, for the rows of $L$ to be dependent, one of two events must occur. Either there is a unique random entry in the rows of $L$ which cancels the entry $M_{i,i}$ in row $i$ (Model 2, $\g=f_{t-1}$; Model 3, $\g=\sum f_i f_{t-i}$).
Or there are 3 entries in the column which sum to zero (Model 2, $\a=\sum f_i f_{t-i-1}$; Model 3, $\a=\sum_{i+j+k=0} f_i f_{j} f_k$). For a column $i$, where $i \in [n]-L$ there must either be no random entries, or two random entries adding to zero, with probability  $\b=\sum f_i f_{t-i}$.
Thus
\begin{equation}\label{GFtq}{
\E X_\ell= {n \choose \ell} \brac{2\g \frac{\ell}{n}\bfrac{n-\ell}{n}+ \a\bfrac{\ell}{n}^2}^\ell
\brac{ \b \bfrac{\ell}{n}^2+\bfrac{n-\ell}{n}^2}^{n-\ell}.}
\end{equation}

{\bf The sum of $\ell$ rows, $\ell \le \om$.}\\
From \eqref{GFtq} above, using the methods of Section \ref{SmallLD} we find $\E Y_\ell$ is given by
\[
\E Y_\ell \sim \frac{(2\g\ell)^\ell}{\ell!} e^{-2\ell}.
\]
Extracting the moments of the fundamental dependencies $Z$ from $\E Y_\ell$
as in Section \ref{SmallLD} gives $\f_t$, as given by \eqref{fit}.

{\bf The sum of $\ell$ rows, $\om < \ell =o(n) $.}\\
As $\b, \g \le 1$ then $\sum \E X_{\ell> \om} \ra 0$. This follows by comparison with the analysis in Section \ref{LargeFM}.

{\bf The sum of $\ell$ rows, $ \ell = cn $.\;}\\
Let $\ell=cn$, then
\begin{flalign*}
\E X_{cn}=& O(1) \brac{ \frac{(2\g c(1-c)+\a c^2)^c}{c^c}\;\;
\frac{(\b c^2+(1-c)^2)^{1-c}}{(1-c)^{(1-c)}}}^n\\
=&O(1) \brac{D^c G^{1-c}}^n.
\end{flalign*}
Model 1: For $GF(3)$, $\g=0,\a=1$, and $D^c G^{1-c}=c^c (1-c)^{1-c}<1$, and thus $\E X_{cn} \ra 0$.

Model 2, 3:
We prove that, provided $1 \ge 2\g \ge \a$, then $D(c)\le 1,\;G(c)<1$ for $c \in (0,1)$, and thus $\E X_{cn} \ra 0$.

Firstly
$D(0)=2\g \le 1$, and  $D(c)=2\g-(2\g-\a)c$ which is monotone non-increasing in $c$.
Secondly $G(0)=1$, $G(1)=1$, and $G'(c)=0$ at $c=1 \pm \sqrt{\b/(\b+1)}$.
Let $\wh c= 1-\sqrt{\b/(\b+1)}$, then $G(\wh c)=2\sqrt{\b(\b+1)}-2\b$.
As $ 2\sqrt{\b(\b+1)}-2\b<1$, $G(c)$ is a minimum at $\wh c$.

\section{Appendix. Converting between the with and without replacement  models}
\label{ConV}
\subsection{$\E Y_\ell$ for $\ell$ small.}\label{ConVsm}
Regarding \eqref{EXLW}, let
\begin{flalign}\nonumber
A=& \brac{ \frac{(\ell-1)(n-\ell)}{(n-1)_2}}^\ell \brac{\bfrac{(\ell)_2}{(n-1)_2} + \bfrac{(n-1-\ell)_2}{(n-1)_2}}^{n-\ell} .
\end{flalign}
Then
\begin{flalign*}
A= & \bfrac{(\ell-1)(n-\ell)}{n^2}^\ell
\brac{\bfrac{\ell}{n}^2+\bfrac{n-\ell}{n}^2}^{n-\ell}\\
\times &
\bfrac{n^2}{(n-1)_2}^n \brac{1-\frac{3(n-\ell)+\ell-2}{\ell^2+(n-\ell)^2}}^{n-\ell}.
\end{flalign*}
However
\begin{equation}\label{e3}
\bfrac{n^2}{(n-1)_2}^n=(1+O(1/n))e^3,
\end{equation}
and for $\ell=o(n)$
\begin{equation}\label{e-3}
B=\brac{1-\frac{3(n-\ell)+\ell-2}{\ell^2+(n-\ell)^2}}^{n-\ell}=(1+O(\ell/n))e^{-3},
\end{equation}
which proves equivalence as $A \sim 1$.

\paragraph{$\E X_\ell$ for $\ell$ large.}
Note from \eqref{e-3} that $B$ is less than one for any feasible $\ell$, and if $\ell=
(n/2)(1+o(1)$ then $B=(1+O(1/n))e^{-2}$.
Also for any $\ell \rai$,
\[
(\ell-1)^{\ell}= (\ell)^{\ell} \bfrac{\ell-1}{\ell}^\ell=
 (1+O(1/\ell)) (\ell)^{\ell} e^{-1}.
\]

\paragraph{$\E (X)_k$ for $\ell \sim n/2$.}
Referring to \eqref{multinomial}, in the with-replacement model we have
\[
\Phi(\ul h, k)=
\prod_{x \ne 0}
\brac{2 \sum_{\{u,v\} \atop u+v=x}\frac {h_\uu}{n} \frac{h_\uv}{n}}^{h_\ux}
\brac{\sum_\uu \bfrac{h_\uu}{n}^2}^{h_0}
\]
The equivalent to $\Phi(\ul h,k)$ in the without-replacement model is
\begin{flalign*}
\Psi(\ul h,k)=&
\prod_{x \ne 0}
\brac{2\brac{ \sum_{\{u,v\} \ne \{x,0\} \atop u+v=x}\frac {h_\uu h_\uv}{(n-1)_2}
+ \frac{(h_\ux-1)h_0}{(n-1)_2} }
}^{h_\ux}
\brac{\frac{(h_0-1)_2}{(n-1)_2}+ \sum_{u \ne 0} \bfrac{(h_\uu)_2}{(n-1)_2}}^{h_0}\\
=&\Phi(\ul h, k)\bfrac{n^2}{(n-1)_2}^n \prod_{x \ne 0}
\brac{1-\frac{h_0}{\sum h_\uu h_\uv}}^{h_\ux}
\brac{1-\frac{\sum_\uu h_\uu+2h_0-2}{\sum h_\uu^2}}^{h_0}\\
=&\Phi(\ul h, k) \cdot C.
\end{flalign*}
As $h_i=(1+o(1))n/2^k$, and $(1-h_0/\sum h_\uu h_\uv)^{h_\ux}\sim e^{-2/2^k}$
we have
\[
\prod_{x \ne 0} \brac{1-\frac{h_0}{\sum h_\uu h_\uv}}^{h_\ux}\sim (e^{-2/2^k})^{2^k-1}
=e^{-2 +1/2^{k-1}},
\]
and
\[
\brac{1-\frac{\sum_\uu h_\uu+2h_0-2}{\sum h_\uu^2}}^{h_0}\sim \brac{1-\frac{2^k+2}{n}}^{n/2^k}=e^{-1-1/2^{k-1}}.
\]
Combining this with \eqref{e3} gives
\[
C \sim e^{3}e^{-2 +1/2^{k-1}}e^{-1-1/2^{k-1}} =1.
\]
\subsection{Without replacement} \label{SNRep}
Let $ S=\{2 \le \ell \le \om\}$ where $\om \rai$  slowly with $n$. For $\ell \in S$, let $Y_\ell(M)$ be the number of index sets of zero-sum rows of size $\ell$ in $M$. Similarly to \eqref{EXL}
\begin{flalign}
\E Y_\ell = & {n \choose \ell} \brac{2 \frac{(\ell-1)(n-\ell)}{(n-1)_2}}^\ell \brac{\bfrac{(\ell)_2}{(n-1)_2} + \bfrac{(n-1-\ell)_2}{(n-1)_2}}^{n-\ell} \label{EXLW}.
\end{flalign}
Assuming that $\ell=o(\sqrt n)$ then
\[
\E Y_\ell = \frac{(2(\ell-1))^\ell}{\ell!}e^{-2\ell} \ooi.
\]
If $L$ is zero-sum then the sub-matrix $M_{L,L}$ is the incidence matrix of a random functional digraph $D_L$ with no fixed points, in which case there are $\ell-1$ off-diagonal entries in any column of $M_{L,L}$
and we exclude cycles of size one. The probability that the underlying graph of
$D_L$ is connected is
\[
\Pr(D_L \text{ connected})= \frac{(\ell-1)!}{(\ell-1)^\ell} \sum_{j=0}^{\ell-2} \frac{\ell^j}{j!}.
\]

\end{document}